\newcites{app}{References}
\theoremstyle{plain}
\newtheorem{remark}{Remark}
\newtheorem{theorem}{Theorem}[section]
\newtheorem{lemma}[theorem]{Lemma}
\newtheorem{assumption}{Assumption}
\theoremstyle{remark}
\DeclareMathOperator*{\argmin}{argmin}
\DeclareMathOperator*{\argmax}{argmax}
\def\ae{\color{blue}}
\def \R{\mathbb{R}}
\def \E{\mathbb{E}}
\def\inp#1#2{\langle #1, #2 \rangle}
\def \D{\mathcal{D}}
\def \M{\mathcal{M}}
\def \logit{\text{logit }}
\def \bW{\mathbf{W} }
\newcommand*{\addFileDependency}[1]{
  \typeout{(#1)}
  \@addtofilelist{#1}
  \IfFileExists{#1}{}{\typeout{No file #1.}}
}
\newcommand*{\myexternaldocument}[1]{%
    \externaldocument[s-]{#1}%
    \addFileDependency{#1.tex}%
    \addFileDependency{#1.aux}%
}
\begin{document}

\begin{frontmatter}
\title{Nonparametric estimation of a covariate-adjusted counterfactual treatment regimen response curve}
\runtitle{Nonparametric regimen-response curve estimation}

\begin{aug}
\author[A]{\fnms{Ashkan}~\snm{Ertefaie}\ead[label=e1]{ashkan\_ertefaie@urmc.rochester.edu}},
\author[A]{\fnms{Luke}~\snm{Duttweiler}\ead[label=e2]{luke\_duttweiler@urmc.rochester.edu}\orcid{0000-0000-0000-0000}}
\author[A]{\fnms{Brent A.}~\snm{Johnson}\ead[label=e3]{brent\_johnson@urmc.rochester.edu}\orcid{0000-0000-0000-0000}}
\and
\author[B]{\fnms{Mark J.}~\snm{van der Laan}\ead[label=e4]{laan@berkeley.edu}}
\address[A]{Department of Biostatistics,
University of Rochester\printead[presep={,\ }]{e1,e2,e3}}

\address[B]{Department of Biostatistics,
University of California at Berkeley\printead[presep={,\ }]{e4}}
\end{aug}

\begin{abstract}
Flexible estimation of the mean outcome under a treatment regimen (i.e., value function) is the key step toward personalized medicine. We define our target parameter as a conditional value function given a set of baseline covariates which we refer to as a stratum based value function. We focus on  semiparametric class of decision rules and propose a sieve based nonparametric covariate adjusted regimen-response curve estimator within that class. Our work contributes in several ways. First, we propose an  inverse probability weighted  nonparametrically efficient estimator of the smoothed regimen-response curve function. We show that asymptotic linearity is achieved when the nuisance functions are undersmoothed sufficiently. Asymptotic and finite sample criteria for undersmoothing are proposed. Second, using Gaussian process theory, we propose simultaneous confidence intervals  for the smoothed regimen-response curve function.  Third, we provide consistency and convergence rate for the optimizer of the regimen-response curve estimator; this enables us to estimate an optimal semiparametric rule. The latter is important as the optimizer corresponds with the optimal dynamic treatment regimen. Some finite-sample properties are explored with simulations.   

\end{abstract}

\begin{keyword}[class=MSC]
\kwd[Primary ]{}
\kwd{}
\kwd[; secondary ]{}
\end{keyword}

\begin{keyword}
\kwd{cadlag functions }
\kwd{dynamic marginal structural models}
\kwd{highly adaptive lasso}
\kwd{kernel smoothing}
\kwd{semiparametric decision rules}
\kwd{undersmoothing}
\end{keyword}

\end{frontmatter}

\section{Introduction}

Marginal structural models have been widely used in causal inference as a tool for estimating the mean outcome under a give decision rule \citep{robins1998marginal} and constructing optimal treatment regimens. This approach requires modeling the potential outcome given a set of baseline covariates, including treatment and possibly some subset of baseline information on a subject.  Existing methods typically impose a parametric causal model on the mean of the potential outcome and draw inference under the assumption that the model is correctly specified  \citep{robins2000marginal, murphy2001marginal, joffe2004model, orellana2010dynamic}. The latter is particularly restrictive when the conditioning set of treatment and baseline covariates includes some continuous variables. Although model selection approaches have been proposed 
\citep{van2003unified}, it is likely that finite sample size prevents recovering the true model and leads to poor out-of-sample performance \citep{shinozaki2020understanding}.  \cite{neugebauer2007nonparametric} proposed a nonparametric marginal structural model framework 
to improve the causal interpretability of the estimated parameters of a potentially 
misspecified parametric working model. However, when the working model is misspecified, the causally interpretable parameters might not be the parameters of interest and the resulting treatment regimen might be suboptimal.


The parameters of the proposed marginal structural model can be estimated using an inverse probability weighted loss functions where weights  are the conditional probability of following a particular decision rule. The consistency and causal interpretability of the corresponding estimators rely on the correctly specified weight function. To mitigate the chance of model misspecification, one may decide to model the weight function nonparametrically using data-adaptive techniques. However, this choice may result in nonregular estimators of the primary parameters in the marginal structural model. This lack of regularity is because the rate of convergence for inverse probability weighted estimators rely on a rate of convergence of the weight function which will be slower than the desired root-$n$ rate when data-adaptive techniques are used. To overcome this concern, authors have used so-called doubly robust estimators  for the parameters in the working marginal structural model; that is, estimators that are consistent as long as one of either the treatment model or the outcome models are correctly specified \citep{robins2004optimal, wahed2004optimal, orellana2010dynamic,rosenblum2011marginal, petersen2014targeted, ertefaie2015identifying}. However, doubly-robust nonparametric estimators may suffer from two important issues: (1) the performance of the estimators depends on the modeling choice of nuisance parameters and they can be irregular with large bias and a slow rate of convergence  \citep{van2014targeted, benkeser2017doubly}; and (2) the quality of the constructed decision rule still relies on correctly-specified  marginal structural model. 

Alternative to the marginal structural models framework, several authors have proposed methods to estimate nonparametric decision rules that circumvent the need for specifying a parametric outcome model.  
\cite{zhao2012estimating} proposed an inverse probability weighting approach to estimate a nonparametric decision rule \citep{dudik2011doubly, zhao2015new}. Doubly robust augmented inverse probability weighting procedures have also been proposed \citep{rubin2012statistical,zhang2012robust, liu2018augmented, athey2021policy}. Due to the nonparametric nature of the estimated decision rule, inference for the resulting value function (i.e., mean outcome under the estimated rule) is challenging. \cite{zhao2012estimating} developed Fisher consistency, the excess risk bound, and the convergence rate of the value function to provide a theoretical guarantee for their proposed method. \cite{athey2021policy} derived bounds for  the regret function under the estimated nonparametric treatment strategy and showed that the bounds decay as $n^{-1/2}$ as long as the Vapnik–Chervonenkis (VC) dimension of the class of decision rules does not grow too fast. Other recent germane contributions on estimating optimal treatment regimes and conditional average treatment effects include \cite{swaminathan2015batch}, \cite{kallus2018balanced}, \cite{nie2021quasi}, \cite{zhao2022selective}, and references therein.

We define a   stratum-based value function as the conditional expectation of the potential outcome under a treatment regimen given a prespecified set of baseline covariates $\bm{V}$. The value  is a function of regimens and we are interested in understanding the functional association between the mean outcome and the regimens within a class which we refer to as the {\it regimen-response curve}.  We consider a current status data setting with two levels of coarsening at random (i.e., treatment assignment and censoring). We overcome the two important shortcomings of the existing methods by: (1) allowing the {regimen-response curve} to be estimated nonparametrically; and (2) estimating the weight functions of the inverse probability weighted loss function using a data-adaptive technique. Recently, \cite{ertefaie2022nonparametric} proposed a nonparametric efficient inverse probability weighted estimator for the average causal effect where the weights are estimated using  undersmoothed highly adaptive lasso. We will generalize below the same technique to estimate conditional average causal effects via nonparametric marginal structural models.    An important byproduct of our nonparametric regimen-response curve estimator is a method to construct semiparametric decision rules where the parameters of the rule are allowed to be a nonparametric function of the set baseline covariates (i.e., ${V}$-specific optimal decision rule). The parameters are defined as the optimizer of the regimen-response curve which allows us to provide a rate of convergence for the corresponding functional parameter estimators using empirical process theory.  
Our theoretical contributions can be summarized as follows: 
\begin{itemize}
    \item Considering a kernel smoothed regimen-response curve function as the target parameter, we show that the proposed inverse probability weighted estimator (1) is nonparametrically efficient when the weight functions are properly undersmoothed;  (2) 
      convergences to a Gaussian process at root-$n$ rate; and (3) is uniformly consistent at a derived rate. 
    \item Considering the regimen-response curve function as the target parameter, we derive the asymptotic normality of our estimator and show that the bias caused by the kernel smoothing of the estimator vanishes  under some mild assumptions and properly undersmoothed kernel and weight functions.
    \item Using a novel empirical process algorithm, we show that the minimizer of the estimated regimen-response curve is a consistent estimator of the minimizer of the true regimen-response curve and derive the convergence rate.  This result is of independent interest as it derives a rate of convergence for an optimizer of a nonparametrically estimated function.   
\end{itemize}

In contrast to the existing literature which targets the value function under the estimated rule, our estimand is the stratum based value function. Deriving the asymptotic properties of our estimand is more challenging because it will be nonpathwise differentiable functional when $\bm{V}$ includes at least one continuous variable \citep{bickel1998efficient}.  Moreover, existing methods can either estimate a parametric or nonparametric rules where in each case flexibility or interpretability is sacrificed. Our proposed semiparametric rule lands in between and provides a reasonable trade-off between flexibility and interpretability. For example, clinicians might have  strong opinion about an important tailoring variable and want to ensure to capture the correct functional form of that variable in the decision rule. In such settings, we can include that variable in the set $\bm{V}$ and provide a  stratum-based decision rule where  the decision rule will be a nonparametric function of $\bm{V}$.   


\section{Preliminaries}

\subsection{Notation and Formulation}\label{sec:notation}

Let $T$ be a univariate failure time, $\bm{W} \in \mathcal{W} \subset \R^{p}$ be a   vector of all available baseline covariates measured before treatment $A \in \{0,1\}$, and $C$ be a censoring variable.  Let $\bm{S}\in \mathcal{S} \subset \R^{q}$ be a  subvector of $\bm{W}$ and $\bm{V} = \bm{W} \setminus \bm{S}$ where $\bm{V} \in \mathcal{V} \subset \R^{p-q}$. For simplicity of notation, we denote $p-q$ as $r$.  A  decision rule $d^{\theta}  \in \mathcal{D} $ is defined as a function that maps values of $\bm{S}$ to a treatment option $\{0,1\}$. In our formulation $\mathcal{D}$ is a class of deterministic rules indexed by a vector of coefficients.  Specifically, we consider $d^{\theta} =I\{\bm{S}^\top \theta>0\}$ with $\theta \in \Theta \subset \R^q$ where $I(.)$ is an indicator function. 
Let $T^\theta$ be the potential outcome that would have been observed under the decision rule $d^\theta$ for a given value of $\theta \in \Theta$. Suppose that we have the full data $X = \{(T^\theta: d^\theta \in \mathcal{D}), \bm{W}\} \sim P_X \in \mathcal{M}^F$  where $\mathcal{M}^F$ is a nonparametric full data model. Define 
$E_{P_X}\{I(T^\theta >t) \mid \bm{V}\}=\Psi_{P_X}(\theta,\bm{V}) \in \mathcal{F}_{\lambda}$  where $\mathcal{F}_{\lambda}$ is a class of cadlag functions with  sectional variation norm bounded by $\lambda$ and $t$ is a predetermined time point.
The full-data risk of $\Psi$ is defined as $E_{P_X} \int_\theta \ell_{\theta}(\Psi,x) dF(\theta)$
 where $\ell$ is the log-likelihood loss function,
$\ell_{\theta}(\Psi) =  I(T^\theta >t) \log \left(\frac{\Psi}{1-\Psi}\right) +\log(1-\Psi),$ and $dF(\theta)$ is a weight function/measure on the Euclidean set of all $\theta$-values (e.g., multivariate normal with possibly large variance).
Define a V-specific optimal decision rule that maximizes the $t$-year survival probability as $\theta_{0}(\bm{V}) = \argmax_{\theta \in \Theta} \Psi_{P_X}(\theta,\bm{V})$. Hence,  we allow $\theta$ to be a functional parameter mapping $\mathcal{V}$ onto $\R^q$. 

The observed data $\mathcal{O} = \{Y = \min(T,C), \Delta = I(T<C), A,\bm{W}\}$   follow some probability distribution $P_0$ that lies in some nonparametric model $\mathcal{M}$. 
Suppose we observe $n$ independent, identically distributed trajectories of $\mathcal{O}$. 
Let $\Delta^c = \Delta+(1-\Delta)I\{Y>t \}$ 
and $G: \mathcal{M} \rightarrow \mathcal{G}$ be a functional nuisance parameter where $\mathcal{G}=\{G(P): P \in \mathcal{M}\}$. 
In our setting, $G(P)\{(A,\Delta^c) \mid \bm{W}\}$ denotes the joint treatment and censoring mechanism under a distribution $P$. We refer to $G(P_0)$ as $G_0$ and $G(P)$ as $G$. Under coarsening at random assumption, $G(P)\{(A,\Delta^c)\mid {X}\}=G(P)\{(A,\Delta^c) \mid \bm{W}\}$. Denote $G^a \equiv G^a(P)(A \mid \bm{W})$ and $G^c \equiv G^c(P)(\Delta^c \mid \bm{W},A)$. We denote the latter two functions under $P_0$ as $G^a_0$ and $G^c_0$.
Also define $Q:  \mathcal{M} \rightarrow \mathcal{Q}$ and $\mathcal{Q}=\{Q(P): P \in \mathcal{M}\}$ where $Q_0(\theta,\bm{W})= \E_{P_0}\{I(Y^{\theta}>t) \mid \bm{W}\}$.
 Let $\prod$ be a projection operator in the Hilbert space $L_0^2(P)$ with inner product $ \inp{h_1}{h_2} =E_P(h_1h_2)$. 

The loss function $\int_\theta \ell_{\theta}(\Psi) dF(\theta) $ is based on the full-data  which is not observable, and thus, cannot be used to define our full data functional parameter $\Psi_{P_X}$. However, under consistency ($T_i=T^{A_i}$) and no unmeasured confounder ($A \perp T^a \mid \bm{W}$ and $\Delta^c \perp T \mid A, \bm{W}$) assumptions, it can be identified using the observed data distribution $P_0$. Specifically, we define an inverse probability weighting mapping of the full-data loss as
\begin{align}
L_{G}(\Psi)=\int_{\theta}  \left[   \frac{\Delta^c I( A= d^\theta)\xi(\theta,\bm{V}) }{G\{(A,\Delta^c) \mid \bm{W}\} } \{ I(Y >t) \log \left(\frac{\Psi}{1-\Psi}\right) +\log(1-\Psi)\}   \right] dF(\theta). \label{eq:lossf}
\end{align}

Accordingly, define the regimen-response curve as $\Psi_{P_0} = \argmin_{\Psi \in \mathcal{F}_\lambda} E_{P_0}  L_{G_0}(\Psi)$. Following our convention, we denote $\Psi_{P_0}$ as $\Psi_{0}$. We define a highly adaptive estimator of $\Psi_{P_0}$ as $\Psi_n = \argmin_{\Psi \in \mathcal{F}_{\lambda_n}} P_n  L_{G_n}(\Psi)$ where $\lambda_n$ is a data adaptively selected $\lambda$ and $G_n$ is an estimator of $G_0$. The choice of  function $\xi$ is inconsequential to the derivation of the efficient influence function of our target parameter but it may impact the level of undersmoothing needed in finite sample. One may consider  $\xi(\theta,\bm{V})=1$ or  $\xi(\theta,\bm{V})=G\{(A,\Delta^c) \mid \bm{V}\}$. The latter choice corresponds to the stabilized weight. 

\subsection{Highly adaptive lasso}

The highly adaptive lasso is a nonparametric regression approach that can be used to estimate infinite dimensional functional parameters \cite{benkeser2016highly, van2017generally, van2017uniform}. It forms a linear combination of indicator basis functions  to minimize the expected value of a loss function under the constraint that the constraint that the $L_1$-norm of the coefficient vector is bounded by a finite constant value.

  Let $\D[0,\tau]$ be
the Banach space of d-variate real valued cadlag functions (right-continuous with left-hand limits) on a cube $[0,\tau] \in \R^d$. For each function $f \in \D[0,\tau]$ define the supremum norm as $\| f \|_{\infty}=\sup_{w \in [0,\tau]} |f(w)|$. For any subset $s$ of $\{0,1,\ldots,d\}$, we partition $[0,\tau]$ in $\{0\} \{ \cup_s (0_s,\tau_s]\}$ and define the sectional variation norm of such $f$ as 
\[
\| f\|^*_\zeta = |f(0)|+\sum_{s \subset\{1,\ldots,d\}} \int_{0_s}^{\tau_s} |df_s(u_s)|,
\]
where the sum is over all subsets of $\{0,1,\ldots,d\}$. For a given subset  $s \subset \{0,1,\ldots,d\}$, define $u_s =(u_j :j \in s)$ and $u_{-s}$ as the complement of $u_s$. Then, $f_s: [0_s,\tau_s] \rightarrow \R$ defined as $f_s(u_s) = f(u_s,0_{-s})$. Thus, $f_s(u_s)$  is a section of $f$ that sets the components in the complement of $s$ equal to zero and varies only along the variables in $u_r$. 

Let $G^a \equiv G(P)(A \mid W)$ and $G^c \equiv G(P)(\Delta^c \mid W,A)$ denote the treatment and censoring mechanism under an arbitrary distribution $P \in \M$. Assuming that our nuisance functional parameter $G^a,G^c \in  \D[0,\tau]$  has finite sectional variation norm, we can represent $ \text{logit } G^a$ as \citep{gill1993inefficient} 
\begin{align}
\text{logit } G^a(w):&=\text{logit } G^a(0)+\sum_{s \subset\{1,\ldots,d\}} \int_{0_s}^{w_s} dG^a_s(u_s) \nonumber \\
      &=\text{logit } G^a(0)+\sum_{s \subset\{1,\ldots,d\}} \int_{0_s}^{\tau_s} I(u_s \leq w_s)dG^a_s(u_s). \label{eq:hal1}
\end{align}
The representation (\ref{eq:hal1}) can be approximated using a discrete measure that puts mass on each observed $W_{s,i}$ denoted by $\alpha_{s,i}$. Let   $\phi_{s,i}(c_s)= I(w_{i,s} \leq c_s)$ where $w_{i,s}$ are support points of $dG^a_s$. We then have
\[
 \text{logit } G^a_\alpha = \alpha_0+\sum_{s \subset\{1,\ldots,d\}}\sum_{i=1}^{n} \alpha_{s,i} \phi_{s,i},
\]
where $ |\alpha_0|+\sum_{s \subset\{1,\ldots,d\}}\sum_{i=1}^{n} |\alpha_{s,i}|$ is an approximation of the sectional variation norm of $f$. The loss based highly adaptive lasso estimator $\beta_n$ is defined as
\[
\alpha_n (\lambda)= \arg \min_{\alpha: |\alpha_0|+\sum_{s \subset\{1,\ldots,d\}}\sum_{i=1}^{n} |\alpha_{s,i}|<\lambda} P_n L( \text{logit } G^a_\alpha).
\]
where $L(.)$ is a given loss function. Denote $ G^a_{n,\lambda} \equiv G^a_{\alpha_n(\lambda)}$ as  the highly adaptive lasso estimate of $G^a$. Similarly, we can define $ G^{c}_{n,\lambda}$ as the highly adaptive lasso estimate $G^c$.  When the nuisance parameter correspond to a conditional probability of a binary variable (e.g., propensity score with a binary treatment or a binary censoring indicator), log-likelihood loss can be used. In practice, $\lambda$ is unknown and must be obtained using data. We refer to $\lambda_n$ as a value of $\lambda$ that is determined using data. 

\section{Dynamic marginal structural models with highly adaptive lasso} \label{sec:est}
\subsection{The challenge} \label{sec:challenge}
The target functional parameter $\Psi_0$ is not a pathwise differentiable function which makes the inference challenging. The existing literature approximates this functional by pathwise differentiable function $\Psi_{\beta}$, where $\beta$ is a vector of unknown parameters \cite{van2006causal, van2007statistical, murphy2001marginal,robins2008estimation, orellana2010dynamic, neugebauer2007nonparametric}. The working model provides a user-specified
summary 
 of the unknown true function relating the expectation of the potential outcome
under a strategy.
Hence, the quality of the constructed decision rule depends on how well the working model approximates the true function. To mitigate this concern we propose  a kernel smoothed highly adaptive lasso estimator of $\Psi_0(\bm{v}_0,\theta)$. Specifically, we define an regimen-response curve estimator as
 \begin{align}
  \Psi_{nh}(\bm{v}_0,\theta) = P_n  U(P_n,\bm{v}_0,\Psi_n) \label{eq:propest} 
\end{align}
where $U(P_n,\bm{v}_0,\Psi_n) = (P_n K_{h,\bm{v}_0})^{-1} \{K_{h,\bm{v}_0} \Psi_n(\theta)\} $ and  $\Psi_n$ is a highly adaptive lasso estimate of $\Psi_0$ obtained based on the loss function (\ref{eq:lossf}). In this formulation, $K_{h,\bm{v}_0}(v) = h^{-r} K\{(v-\bm{v}_0)h^{-1}\}$ with $K(v) = K(v_1,\cdots,v_r)$ where $K(v)$ is a kernel satisfying $\int K(v) dv=1 $. Moreover, we assume that $K(v) = \prod_{j=1}^r K_u(v_j)$ is a product kernel  defined by a univariate kernel $K_u(.)$ and $K(v)$ is a J-orthogonal kernel such that  $\int K_u(v) dv=1 $ and $\int K_u(v) v^j  dv=0 $ for $j=1,\cdots,J$. 

We study the asymptotic behaviour of our estimator relative to  $\Psi_0(\bm{v_0},\theta)$  and a kernel smoothed parameter
\[
\Psi_{0h}(\bm{v}_0,\theta) = P_0  U(P_0,\bm{v}_0,\Psi_0), 
\]
where $h$ is a fixed bandwidth. 
In general, when the nuisance parameter $G_0$ is estimated using data adaptive regression techniques, the resulting regimen-response curve estimator $\Psi_{nh}(\bm{v}_0,\theta)$ won't be asymptotically linear which makes inference challenging. For example, consider the target parameter $\Psi_{0h}(\bm{v}_0,\theta)$, we have
 \begin{align}
  \Psi_{nh}(\bm{v}_0,\theta)-\Psi_{0h}(\bm{v}_0,\theta) =& (P_n-P_0) U(P_0,\bm{v}_0,\Psi_0) + \nonumber \\
                                                                          &P_0\{U(P_n,\bm{v}_0,\Psi_n)-U(P_0,\bm{v}_0,\Psi_0)\} + \nonumber\\
                                                                          &(P_n-P_0) \{U(P_n,\bm{v}_0,\Psi_n)-U(P_0,\bm{v}_0,\Psi_0)\}. \label{eq:aslin}
  \end{align}
The first term on the right-hand side of (\ref{eq:aslin}) is exactly linear and the third term is negligible under Donsker condition  of $\Psi_n$. The problem arises because  data adaptive regression techniques have a rate of convergence slower than the desired root-$n$ rate, and thus, the bias of $P_0\{U(P_n,\bm{v}_0,\Psi_n)-U(P_0,\bm{v}_0,\Psi_0)\}$ diverges to infinity.

\subsection{The intuition} We show that when the nuisance parameters are properly undersmoothed, the asymptotic linearity of our estimator $\Psi_{nh}(\bm{v}_0,\theta)$ can be retrieved. Specifically, in the proof of Theorem \ref{th:movh}, we show that
\begin{align}
    P_0\{U(P_n,\bm{v}_0,\Psi_n)-U(P_0,\bm{v}_0,\Psi_0)\} = &(P_n-P_0)\tilde D^*_{h,\bm{v_0}}(P_0)+o_p(n^{-1/2}) \nonumber\\
&\hspace{.2in} +P_n \frac{K_{h,\bm{v}_0}\Delta^c I( A= d^\theta)}{G_nP_n K_{h,\bm{v}_0}} \left\{I(Y\geq t)-\Psi_n \right\} \label{eq:scorepsi}\\
&\hspace{.2in} + P_n \frac{K_{h,\bm{v}_0}I( A= d^\theta)}{G_n P_n  K_{h,\bm{v}_0}}(Q_n-\Psi_n)\left(\Delta^c-G_{n}^c \right) \label{eq:scoredelta}\\
&\hspace{.2in} + P_n  \frac{K_{h,\bm{v}_0 }(Q_n-\Psi_n)}{G_{n}^a P_n K_{h,\bm{v}_0}} \left\{I( A= d^\theta)-G_{n}^a\right\}, \label{eq:scorea}
\end{align}
where $\tilde D^*_{h,\bm{v_0}}(P_0)$ includes certain components of the canonical gradient of our statistical parameter $\Psi_{0h}(\bm{v_0},\theta)$ and thus, contributes to the efficient influence function of our estimator. The last three terms do not converge to zero at  the appropriate rate and thus induce non-negligible bias. However, the terms (\ref{eq:scorepsi}), (\ref{eq:scoredelta}) and (\ref{eq:scorea}) resemble the form of score equations for $I(Y\geq t)$, $\Delta^c$ and $I( A= d^\theta)$. 

We first discuss the score equations for $G^.$ where $. \in \{a,c\}$. We can generate score $S_g(G_n^.)$ for a nonparametric estimator $G_n^.$ by paths $\{G_{n,\epsilon}^{.g}(w)\}$, such that
\begin{align*}
  \logit G_{n,\epsilon}^{.g}(w) &= \logit G_n^.(0) \{1+\epsilon g(0)\} + \sum_{s \subset\{1,\ldots,d\}}
    \int_{0_s}^{w_s} \phi_{s,u_s}(w) \{1+\epsilon g(s,u_s)\} d \logit G_{n,s}^.(u),
\end{align*}
where $g$ is a uniformly bounded function on $[0,\tau]^p$. Accordingly when a function is estimated using a highly adaptive lasso, the score functions can be generated by a path $\{1+\epsilon g(s,j)\} \beta_{n,s,j}$ for
a bounded vector $g$ as
\begin{equation} \label{eq:score}
  S_g(G_{n,\lambda_n}^.) = \frac{d}{d \logit G^._{n,\lambda_n}} L(
  \logit G^._{n,\lambda_n}) \left\{ \sum_{(s,j)} g(s,j) \beta_{n,s,j} \phi_{s,j}
  \right\},
\end{equation}
where $L(\cdot)$ is the log-likelihood loss function. For example, for the treatment indicator, 
$L(G^a) =  A \log \left(\frac{G^a}{1-G^a}\right) +\log(1-G^a)$. The penalty in the highly adaptive lasso imposes a restriction on the $g$ function. Specifically, the path $\{1+\epsilon g(s,j)\} \beta_{n,s,j}$ can be generated using those $g$ functions that do not change the $L_1$-norm of the parameters. Let $\beta^g_{n,s,j}=\{1+\epsilon g(s,j)\} \beta_{n,s,j}$ denote the set of perturbed parameters. Then, for small enough $\epsilon$ such that $\{1+\epsilon g(s,j)\}>0$,  $\|\beta^g_{n,s,j}\|_1= \|\beta^g_{n,s,j}\|_1 \{1+\epsilon g(s,j)\} = \|\beta^g_{n,s,j}\|_1 + \epsilon g(s,j) \|\beta_{n,s,j}\|_1$. Hence, the restriction is satisfied when the inner product of $g$ and the vector $|\beta|$ is zero (i.e., $<g.|\beta|>=0$). 

We now provide a thought experiment on how undersmoothing the nuisance function estimates using a highly adaptive lasso can eliminate the terms (\ref{eq:scorepsi}), (\ref{eq:scoredelta}) and (\ref{eq:scorea}). We first ignore the $L_1$-norm restriction on the choice of function $g$ which is erroneous but contains the main idea of the approach. Without the restriction, one can choose $g(s_0,j_0) = I(s=s_0,j=j_0)$. The latter choice perturbs one coefficient at a time and corresponds to maximum likelihood estimators.  Then, for any $s_0$ and $j_0$, the score function (\ref{eq:score}) becomes 
\[
S_I(G_{n,\lambda_n}^.) = \frac{d}{d \logit G^._{n,\lambda_n}} L(
  \logit G^._{n,\lambda_n}) \left(  \beta_{n,s_0,j_0} \phi_{s_0,j_0}
  \right).
\]
Because, $\beta_{n,s_0,j_0}$ is a finite constant for all $s_0$ and $j_0$, solving the above score equation is equivalent to solving 
\[
 \frac{d}{d \logit G^._{n,\lambda_n}} L(
  \logit G^._{n,\lambda_n})  \phi_{s_0,j_0}.  
\]
For example, for the treatment indicator, the score equation is given by
$ (A - G_{n,\lambda_n}^a) \phi_{s_0,j_0}$. As we undersmooth the fit, we solve more and more score equations (i.e., the number of score equations increases with the number of features included in the model) and any linear combination of those score equations will also be solved.  Now let's consider the term (\ref{eq:scorea}) and let $f=\frac{K_{h,\bm{v}_0 }(Q_n-\Psi_n)}{G_{n}^a P_n K_{h,\bm{v}_0}}$. Assuming that $Q_0$, $\Psi_0$ and $G_0 = G_0^aG_0^c$ 
   are c\`{a}dl\`{a}g with finite
  sectional variation norm (Assumption \ref{assump:cadlag} in Section \ref{sec:theory}), \cite{gill1993inefficient} showed that $f$ can be approximated as a linear combination of indicator basis functions.  Therefore, if we sufficiently undersmooth $G_{n,\lambda}^a$ we will solve (\ref{eq:scorea}) up to a root-$n$ factor. That is 
  \[
P_n  \frac{K_{h,\bm{v}_0 }(Q_n-\Psi_n)}{G_{n}^a P_n K_{h,\bm{v}_0}} \left\{I( A= d^\theta)-G_{n}^a\right\}= o_p(n^{-1/2}). 
  \]
The same argument can be made to show that the other terms (\ref{eq:scorepsi}) and (\ref{eq:scoredelta}) can be made negligible when $\Psi_n$ and $G_{n}^c$ are properly undersmoothed. Hence, the challenging term will be asymptotically linear with
\begin{align*}
    P_0\{U(P_n,\bm{v}_0,\Psi_n)-U(P_0,\bm{v}_0,\Psi_0)\} = &(P_n-P_0)\tilde D^*_{h,\bm{v_0}}(P_0)+o_p(n^{-1/2}).  
\end{align*}
Now, we study the actual case where the choice function $h$ is restricted to those that do not change the $L_1-$norm of the coefficients. Lemma \ref{lem:ucondition-fixedh} shows than when   (\ref{eq:basis3-fixedh}), (\ref{eq:basis1-fixedh}) and (\ref{eq:basis2-fixedh}) are satisfied, our proposed estimator achieves  asymptotic linearity. As we undersmooth the fit, we start adding features with small coefficients into the model. Conditions (\ref{eq:basis3-fixedh}), (\ref{eq:basis1-fixedh}) and (\ref{eq:basis2-fixedh}) imply that the $L_1-$norm must be increases until one of the corresponding score equations is solved to a precision of $o_p(n^{-1/2})$. 

Here we provide details for the score equations corresponding to the treatment indicator where $S_g(G_{n,\lambda_n}^a) =   (A - G_{n,\lambda_n}^a) \left\{\sum_{(s,j)} g(s,j)
\beta_{n,s,j} \phi_{s,j} \right\}$.
Let $r(g,G_{n,\lambda_n}^a) = \sum_{(s,j)} g(s,j) \lvert \beta_{n,s,j} \rvert$.
For small enough $\epsilon$,
\begin{align*}
 \sum_{(s,j)} \lvert \{1+\epsilon g(s,j)\} \beta_{n,s,j} \rvert &= \sum_{(s,j)}
   \{1 + \epsilon g(s,j)\} \lvert \beta_{n,s,j} \rvert \\
   &=\sum_{(s,j)} \lvert \beta_{n,s,j} \rvert + \epsilon r(g,G_{n,\lambda_n}^a).
\end{align*}
Hence,  for any $g$ satisfying $r(g,G_{n,\lambda_n}^a)=0$ (i.e., $h$ does not change the $L_1-$ norm of the coefficients), we have $P_n
S_g(G_{n,\lambda_n}^a) = 0$. 

Let $D\{f,G_{n,\lambda_n}^a\} = f \cdot (A - G_{n,\lambda_n}^a)$, where $f$ is
defined above, and let
$\tilde{f}$ be an approximation of $f$ using the basis functions that satisfy
condition (\ref{eq:basis1-fixedh}). Then, $D\{\tilde{f}, G_{n,\lambda_n}^a\} \in
\{S_g(G_{n,\lambda_n}^a): \lVert g \rVert_{\infty} < \infty \}$. Thus, there
exists $g^{\star}$ such that $D(\tilde{f}, G_{n,\lambda_n}^a) = S_{g^{\star}}
(G_{n,\lambda_n}^a)$; however, for this particular choice of $g^{\star}$,
$r(g,G_{n,\lambda_n}^a)$ may not be zero (i.e., the restriction on $g$ might be violated). Now, define $g$ such that $ g(s,j)
= g^{\star}(s,j)$ for $(s,j) \neq (s^{\star}, j^{\star})$; $\tilde{g}
(s^{\star}, j^{\star})$ is defined such that
\begin{align}\label{eq:restr}
\sum_{(s,j) \neq (s^{\star},j^{\star})} g^{\star}(s,j) \lvert
 \beta_{n,s,j} \rvert +  g(s^{\star}, j^{\star})
 \lvert \beta_{n, s^{\star}, j^{\star}} \rvert = 0.
\end{align}
That is, $g$ matches $g^{\star}$ everywhere but for a single point
$(s^{\star}, j^{\star})$, where it is forced to take a value such that
$r(g,G_{n,\lambda_n}^a)=0$. As a result, for such a choice of $g$, $P_n S_{h}
(G_{n,\lambda_n}^a) = 0$ by definition. Below, we show that $P_n
S_{g}(G^a_{n,\lambda_n}) - P_n D(\tilde{f}, G_{n,\lambda_n}^a) = o_p(n^{-1/2})$
which then implies that $P_n D(\tilde{f}, G_{n,\lambda_n}^a)
= o_p(n^{-1/2})$. We note that the choice of $(s^{\star}, j^{\star})$ is
inconsequential.
\begin{align*}
   P_n S_{g}(G_{n,\lambda_n}^a) - P_n D\{\tilde{f}, G_{n,\lambda_n}^a\} &= P_n
   S_{g}(G_{n,\lambda_n}^a) - P_n S_{g^*}(G_{n,\lambda_n}^a) \\ &=P_n \left\{
   \frac{d}{d\logit G^a_{n,\lambda_n}} L( \logit G^a_{n,\lambda_n})
   \left[\sum_{(s,j)} \left\{g(s,j) - g^{\star}(s,j)\right\} \beta_{n,s,j}
   \phi_{s,j} \right]  \right\} \\ &= P_n
   \left[\frac{d}{d\logit G^a_{n,\lambda_n}} L( \logit G^a_{n,\lambda_n}) \left\{
   g(s^{\star},j^{\star}) - g^{\star}(s^{\star},j^{\star})\right\}
   \beta_{n,s^{\star},j^{\star}} \phi_{s^{\star},j^{\star}} \right] \\ 
   &=O_p \left(P_n \left[\frac{d}{d\logit G^a_{n,\lambda_n}} L(
 \logit G^a_{n,\lambda_n})\phi_{s^{\star},j^{\star}} \right] \right) \\
 & = o_p(n^{-1/2}).
\end{align*}
The details of the forth equality is given in the proof of Lemma  \ref{lem:ucondition-fixedh}
and the last equality follows from the assumption that $\min_{(s,j) \in
\mathcal{J}_n } \lVert P_n \frac{d}{d\logit G^a_{n,\lambda_n}}
L(\logit G^a_{n,\lambda_n}) (\phi_{s,j}) \rVert = o_p(n^{-1/2})$ for $L(\cdot)$
being log-likelihood loss (i.e., condition (\ref{eq:basis1-fixedh})). As $P_n S_{g}(G^a_{n,\lambda_n}) = 0$, it
follows that $P_n D(\tilde{f},G^a_{n,\lambda_n}) = o_p(n^{-1/2})$. Using this
result, under mild assumptions, we showed that $P_n D(f,G^a_{n,\lambda_n})= o_p(n^{-1/2})$ indicating that the term (\ref{eq:scorea}) will be asymptotically negligible (see the proof of Lemma \ref{lem:ucondition-fixedh} for details).

\subsection{The estimator.} To improve the finite sample performance of our estimator we propose to estimate the nuisance parameters using cross-fitting ~\citep{klaassen1987consistent, zheng2011cross,
chernozhukov2017double}. We split the data at random into $B$
mutually exclusive and exhaustive sets of size approximately $n B^{-1}$. Let $P_{n,b}^0$ and $P_{n,b}^1$ denote the  the empirical distribution of a training  and a validation sample, respectively. For a given
$\lambda$ and $h$, exclude a single (validation) fold of data and fit the highly adaptive lasso estimator
using data from the remaining $(B-1)$ folds; use this model to estimate the
nuisance parameters for samples in the holdout (validation) fold. By repeating the process $B$ times, we will have estimates of the nuisance parameters for all sample units. Accordingly, we define the cross-fitted IPW estimator
 \begin{align}
  \Psi_{nh}^{\textsuperscript{CF}}(\bm{v}_0,\theta) =  B^{-1}
\sum_{b=1}^B  P_{n,b}^1 U(P_{n,b}^1,\bm{v}_0,\Psi_{n,\lambda,b}) \label{eq:propest} 
\end{align}
where $U(P_{n,b}^1,\bm{v}_0,\Psi_{n,\lambda,b}) = (G_{n,\lambda,b} P_{n,b}^1 K_{h,\bm{v}_0})^{-1} \{K_{h,\bm{v}_0} \Delta^c I( A= d^\theta) \Psi_{n,\lambda,b}\} $,  $\Psi_{n,\lambda,b}$ and $G_{n,\lambda,b}$ are  highly adaptive lasso estimates of $\Psi_0$ and $G_0$ applied
to the training sample for the b\textsuperscript{th} sample split for a given
$\lambda$. The estimator uses a ($J-1$)-orthogonal kernel with bandwidth $h$ centered at $\bm{v_0}$.

\subsection{Data-adaptive bandwidth selector} \label{sec:bandw}
The proposed estimator $\Psi_{nh}(\bm{v}_0,\theta)$ approaches to $\Psi_0(\bm{v}_0,\theta)$ as $h_n \rightarrow 0$ at the cost of increasing the variance of the estimator. Mean square error (MSE) provides an ideal criterion for bias-variance trade-off. However, because the bias is unknown the latter  cannot be used directly. Here we propose an approach that circumvent the need for knowing bias while targeting the optimal bias-variance trade-off. Let $\sigma_{nh}$ be a consistent estimator of $\sigma_{0} = [E\{D^{*2}_{\bm{v}_0}(P_0)\}]^{1/2}$ where $D^{*2}_{\bm{v}_0}(P_0)$ is the efficient influence function for $\Psi_{nh}(\bm{v}_0,\theta)$ (see Theorem \ref{th:movh}). The goal is to find an $h$ that minimizes the MSE or equivalently set the derivative of the MSE to zero. That is
\[
\{\Psi_{nh}(\bm{v}_0,\theta) - \Psi_0(\bm{v}_0,\theta)\} \frac{\partial }{\partial h} \Psi_{nh}(\bm{v}_0,\theta) + \frac{\sigma_{nh}}{(nh^r)^{1/2}} \frac{\partial }{\partial h}\sigma_{nh}/(nh^r)^{1/2}=0
\]
We know that the optimal (up to a constant) bias-variance trade-off implies  $\{\Psi_{nh}(\bm{v}_0,\theta) - \Psi_0(\bm{v}_0,\theta)\} \approx \sigma_{nh}/(nh^r)^{1/2}$  \citep{van2018cv}. Hence, the optimal $h$ will also solve one of the following equations
\begin{align}
    \frac{\partial }{\partial h}\Psi_{nh}(\bm{v}_0,\theta) \pm \kappa \left(\frac{\partial }{\partial h}\sigma_{nh}/(nh^r)^{1/2}\right) =0, \label{eq:hcri}
\end{align}
where $\kappa$ is a positive constant. Consider a scenario where $\Psi_{nh}(\bm{v}_0,\theta)$ shows an increasing trend as $h_n$ approaches zero. This implies that $\Psi_0(\bm{v}_0,\theta)>\Psi_{nh}(\bm{v}_0,\theta)- \kappa \sigma_{nh}/(nh^r)^{1/2}$. 
We then define our optimal finite sample bandwidth selector as
\begin{align*}
    h_n = \argmax_h \Psi_{nh}(\bm{v}_0,\theta)- \kappa \sigma_{nh}/(nh^r)^{1/2}.
\end{align*}
Similarly, when $\Psi_{nh}(\bm{v}_0,\theta)$ shows decreasing  trend as $h_n$ approaches zero, we define $h_n = \argmin_h \Psi_{nh}(\bm{v}_0,\theta)+ \kappa \sigma_{nh}/(nh^r)^{1/2}$. A natural choice for the constant $\kappa$ is $(1-\alpha)$-quantile of the standard normal distribution (i.e., $\zeta_{1-\alpha}$). Hence, $h_n$ is the bandwidth value that minimizes the difference between the true value $\Psi_0(\bm{v}_0,\theta)$ and the corresponding lower (upper) confidence bound.

\subsection{Undersmoothing criteria} \label{sec:criteria}
The asymptotic linearity results of Theorems \ref{th:movh} and \ref{th:fixedh} rely on estimating the corresponding nuisance parameters using an undersmoothed highly adaptive lasso. Specifically, Theorem \ref{th:movh} requires that 
\begin{itemize}
    \item[(a)] $ \left|P_n D_{CAR}^a(P_n,G_n^a,Q_n,\Psi_n,\theta)\right|=o_p\left((nh^r)^{-1/2}\right)$,
    \item[(b)] $ \left|P_n D_{CAR}^c(P_n,G_n,Q_n,\Psi_n,\theta)\right|=o_p\left((nh^r)^{-1/2}\right)$,
    \item[(c)] $ \left| P_n D_{CAR}^\Psi(P_n,G_n,\Psi_n,\theta)  \right|= o_p\left((nh^r)^{-1/2}\right)$,
\end{itemize}
where
 $D_{CAR}^a(P_n,G_n^a,Q_n,\Psi_n,\theta) = \frac{K_{h,\bm{v}_0}}{P_n K_{h,\bm{v}_0}}  (Q_n-\Psi_n)\left(\frac{G_n^a- I( A= d^\theta)}{G_n}\right)$,
 $D_{CAR}^c(P_n,G_n,Q_n,\Psi_n,\theta) = \frac{K_{h,\bm{v}_0}I( A= d^\theta)}{P_n K_{h,\bm{v}_0}}  (Q_n-\Psi_n)\left(\frac{G_n^c-\Delta^c }{G_n}\right)$
 and $D_{CAR}^\Psi(P_n,G_n,\Psi_n,\theta)=\frac{K_{h,v}\Delta^c I( A= d^\theta) }{G_n P_n K_{h,v}} \{ \Psi_n -I(Y >t) \} $.
 Motivated by our theoretical results, we propose the following practical L1-norm bound selection criteria for $G_n^a$, $G_n^c$ and $\Psi_n$:

\begin{align}
    \lambda_{n,G^a} &= \argmin_{\lambda} \left| B^{-1} \sum_{b=1}^B P_{n,b}^1  D_{CAR}^a(P_{n,b}^1,G_{n,\lambda,b}^a,G_{n,b}^c,Q_{n,b},\Psi_{n,b},\theta)\right|, \label{eq:lamga}\\
    \lambda_{n,G^c} &= \argmin_{\lambda} \left| B^{-1} \sum_{b=1}^B P_{n,b}^1  D_{CAR}^c(P_{n,b}^1,G_{n,b}^a,G_{n,\lambda,b}^c,Q_{n,b},\Psi_{n,b},\theta)\right|, \label{eq:lamgc}\\
    \lambda_{n,\Psi} &= \min \left\{ \lambda:  \left| B^{-1} \sum_{b=1}^B  P_{n,b}^1 D_{CAR}^\Psi(P_{n,b}^1,G_{n,b}^a,G_{n,b}^c,\Psi_{n,\lambda,b},\theta)  \right| \leq \frac{\sigma_{nh}}{(n \log n)^{1/2}} \right\}, \label{eq:lamm}
\end{align}
where $G_{n,b}^a$, $G_{n,b}^c$, $\Psi_{n,b}$ and $Q_{n,b}$ are  cross-validated highly adaptive lasso estimates of the corresponding nuisance parameters with the L1-norm bound based on
the global cross-validation selector. 

To achieve the Gaussian process convergence results in Theorem \ref{th:fixedh} (with a fixed $h$), we would require stronger conditions. Specifically, the conditions listed above must hold uniformly for any $ \bm{v} \in \mathcal{V}$, that is, $\sup_{v \in \mathcal{V}} \left|P_n D_{CAR}^a(P_n,G_n^a,Q_n,\Psi_n,\theta)\right|=o_p(n^{-1/2})$,\\ $\sup_{v \in \mathcal{V}} \left|P_n D_{CAR}^c(P_n,G_n,Q_n,\Psi_n,\theta)\right|=o_p(n^{-1/2})$ and $\sup_{v \in \mathcal{V}} \left| P_n D_{CAR}^\Psi(P_n,G_n,\Psi_n,\theta)  \right|= o_p(n^{-1/2})$. Accordingly, the practical criteria are given by
\begin{align}
    \lambda_{n,G^a}^{u} &= \argmin_{\lambda} \sup_{v \in \tilde{\mathcal{V}}}\left| B^{-1} \sum_{b=1}^B P_{n,b}^1  D_{CAR}^a(P_{n,b}^1,G_{n,\lambda,b}^a,G_{n,b}^c,Q_{n,b},\Psi_{n,b},\theta)\right|, \label{eq:lamgua}\\
    \lambda_{n,G^c}^{u} &= \argmin_{\lambda} \sup_{v \in \tilde{\mathcal{V}}}\left| B^{-1} \sum_{b=1}^B P_{n,b}^1  D_{CAR}^c(P_{n,b}^1,G_{n,b}^a,G_{n,\lambda,b}^c,Q_{n,b},\Psi_{n,b},\theta)\right|, \label{eq:lamguc}\\
    \lambda_{n,\Psi}^{u} &= \min \left\{ \lambda: \sup_{v \in \tilde{\mathcal{V}}} \left| B^{-1} \sum_{b=1}^B  P_{n,b}^1 D_{CAR}^\Psi(P_{n,b}^1,G_{n,b}^a,G_{n,b}^c,\Psi_{n,\lambda,b},\theta)  \right|\leq \frac{\sigma_{nh}}{(n \log n)^{1/2}} \right\}, \label{eq:lammu}
\end{align}
where $\tilde{\mathcal{V}}=(\tilde{\bm{v}}_{01},\cdots,\tilde{\bm{v}}_{0\alpha})$ includes $\alpha$ sample points of $\mathcal{V}$. The asymptotic linearity result of Theorem \ref{th:fixedh} with a fixed bandwidth $h$  relies on similar undersmoothing criteria as those listed in (a)--(c) but with $h$ being replaced by one, and thus, the same practical undersmoothing criteria as(\ref{eq:lamga})-(\ref{eq:lamm}) can be considered. 

In Theorems \ref{th:movh} and \ref{th:fixedh}, we show that the proposed  estimators are asymptotically efficient when they solve the efficient influence function, that is, $P_n D_{CAR}^\diamond = o_p\left((nh^r)^{-1/2}\right)$ and $P_n D_{CAR}^\diamond = o_p(n^{-1/2})$, respectively, for $\diamond \in \{a,c,\Psi\}$. The $\argmin$ criteria proposed in this section correspond to the most efficient  estimators for a given data. Increasing $\lambda$ results in decreasing the empirical mean of pseudo score functions $\frac{K_{h,\bm{v}_0}}{P_n K_{h,\bm{v}_0}}  (Q_n-\Psi_n)\left(G_{n,\lambda}^a- I( A= d^\theta)\right)$ and $\frac{K_{h,\bm{v}_0}I( A= d^\theta)}{P_n K_{h,\bm{v}_0}}  (Q_n-\Psi_n)\left(G_{n,\lambda}^c-\Delta^c \right)$ and increasing the variance of $(G_{n,\lambda}^a)^{-1}$ and $(G_{n,\lambda}^c)^{-1}$. At a certain point in the grid of $\lambda$, decreases in the empirical mean of the pseudo score functions are insufficient for satisfying the $\argmin$ conditions, which $\left|P_n D_{CAR}^\diamond \right|$ starts increasing on account of $(G_{n,\lambda}^a)^{-1}$ and $(G_{n,\lambda}^c)^{-1}$. Unlike $\left|P_n D_{CAR}^a\right|$ and $\left|P_n D_{CAR}^c\right|$, $\left|P_n D_{CAR}^\Psi\right|$ decreases as $\lambda$ increases which motivates the undersmoothing selection criteria (\ref{eq:lamm}) and (\ref{eq:lammu}). Specifically, the sectional variation norm $\lambda_{n,\Psi}$ and $\lambda_{n,\Psi}^{u}$ are defined as the smallest value (larger than that chosen by the cross-validation selector) for which the left-hand side of the condition is
less than $\sigma_{nh}/(n \log n)^{1/2}$ where $\sigma_{nh}$ be a consistent estimator of $\sigma_{0} = [E\{D^{*2}_{\bm{v}_0}(P_0)\}]^{1/2}$ with  $D^{*2}_{\bm{v}_0}(P_0)$ being the efficient influence function for $\Psi_{nh}(\bm{v}_0,\theta)$. The cutpoint will lead to a bias that is of a smaller magnitude of the standard error, and thus, won't affect the coverage.

\begin{remark}
    The variance estimator $\sigma_{nh}^2$ is calculated using the efficient influence function. One can, instead, use a conservative variance estimator obtained by an influence function where the nuisance parameters are assumed to be known. That is 
    \begin{align} \label{eq:conservative-variance}
        \sigma_{nh}^2 &= P_n \left\{\frac{K_{h,\bm{v}_0}}{P_n K_{h,\bm{v}_0}}\Psi_n(\theta) - \Psi_{0}(\bm{v}_0,\theta)\right\}^2,
    \end{align}
    where  $\Psi_{0}(\bm{v}_0,\theta)$ can be replaced by their corresponding estimators. The variance estimator (\ref{eq:conservative-variance}) can also be used in Section \ref{sec:bandw} where we proposed an adaptive bandwidth selector. 
\end{remark}

\section{Theoretical results} \label{sec:theory}

The asymptotic properties of our estimator relies on the following assumptions. 

\begin{assumption}[Complexity of nuisance functions] \label{assump:cadlag}
  The functions $Q_0$, $\Psi_0$, $G_{0}^a$ and $G_{0}^c$ are c\`{a}dl\`{a}g with finite
  sectional variation norm.
\end{assumption}

Assumption \ref{assump:cadlag}  characterizes the global smoothness assumption of the true functions which is much weaker than than local smoothness assumptions like those characterized by an assumption
utilizing H{\"o}lder balls~\citep[e.g.,][]{robins2008higher, robins2017minimax,
mukherjee2017semiparametric}. The assumption facilitates 
the fast
convergence rate of $n^{-1/3}
\log(n)^{p/2}$ obtainable by the highly adaptive lasso (regardless of dimensionality
$p$) \citep{van2017generally,van2017uniform,bibaut2019fast}. The latter rate is generally faster than the typical
minimax rates that appear under differing, but similarly positioned, assumptions
regarding function classes in nonparametric estimation. In fact, we expect that almost all true conditional mean models that we have to deal with in real data will satisfy Assumption \ref{assump:cadlag}. The sectional variation is defined in more detail in Appendix \ref{sec:secnorm}.

\begin{assumption} (causal inference and censoring assumptions) \label{assump:basic}
For $i=1,\dotsc,n$,
\begin{itemize}
\item[(a)] \emph{Consistency} of the observed outcome: $T_i = T_i^{A_i}$;
\item[(b)] \emph{Unconfoundedness} of the treatment and censoring mechanisms: $A_i \perp
  T_i^a | \mathbf{W}_i,~\forall a \in \{0,1\}$, and $\Delta_i^c \perp T_i \mid A_i, \bm{W}_i$;
\item[(c)] \emph{Positivity} of the treatment and censoring mechanisms: $\min_{a,\delta} G(A=a,\Delta^c=\delta \mid \bW)>\gamma$ for all $\mathbf{W} \in \mathcal{W}$ where $\gamma$ is a small positive constant.
\end{itemize}
\end{assumption}

Assumption \ref{assump:basic} lists  standard causal identifiability assumptions. The consistency assumption ensures that a subject's potential outcome is not affected by other subjects' treatment levels and there are no different versions of treatment. Assumption \ref{assump:basic}b is a version of the coarsening at random assumption that imposes conditional independence between (1) treatment and potential outcomes given the measured covariates; and (2) censoring indicator and the observed outcome given the treatment and measured covariates. Positivity states that every subject has some chance of receiving treatment level $a$ with censoring status $\delta$ regardless of covariates. Using Assumption \ref{assump:basic} the full data risk of $\Psi$ can be identified using the observed data as
\[
E_{P_X} \int_\theta \ell_{\theta}(\Psi,x) dF(\theta) = E_{P_0}  L_{G_0}(\Psi),
\]
where $L_{G_0}(\Psi)$ and $\ell_{\theta}(\Psi,x)$ were defined in Section \ref{sec:notation}. Even if the Assumption \ref{assump:basic}a-b  do not hold, it may still be of interest to estimate $E_{P_X} \int_\theta \ell_{\theta}(\Psi,x) dF(\theta)$ using $P_n L_{G_0}(\Psi)$ as an adjusted measure of association.

\begin{remark}
    The censoring at random assumption implies that $C \perp T \mid A,\bm{W}$ on $C<T$ which is weaker than $C \perp T \mid A,\bm{W}$ \citep[Chapter 1]{van2003unified}. However, because the additional restriction can not be  identified using the observed data, it will not impose any restriction on the tangent space and thus, the statistical model remains nonparametric. 
\end{remark}

\begin{assumption}[Undersmoothing]\label{assump:proj}
The level of undersmoothing satisfies the following conditions:
\begin{itemize}
\item[(a)] Fixed $h$: Let ${f}_\phi^\Psi$, ${f}_\phi^a$ and ${f}_\phi^c$   be the projections of $f^\Psi = \Delta^c I( A= d^\theta)/G_0$, $f^a = (Q_0-\Psi_0)/ G_{0}$ and $f^c = I(A=d^\theta)(Q_0-\Psi_0)/ G_{0}$ onto a linear
  span of basis functions $\phi_{s,j}$ in $L^2(P)$, for $\phi_{s,j}$
  satisfying conditions (\ref{eq:basis3-fixedh}), (\ref{eq:basis1-fixedh}) and (\ref{eq:basis2-fixedh}) of Lemma  \ref{lem:ucondition-fixedh}. Then, $\lVert f^\Psi - {f}_\phi^\Psi
  \rVert_{2,\mu} = O_p(n^{-1/4})$, $\lVert f^a - {f}_\phi^a
  \rVert_{2,\mu} = O_p(n^{-1/4})$ and $\lVert f^c - {f}_\phi^c
  \rVert_{2,\mu} = O_p(n^{-1/4})$.
\item[(b)] Converging $h$: The functions $f_h^\Psi = K_{h,\bm{v}_0}f^\Psi$, $f_h^a= K_{h,\bm{v}_0}f^a$ and $f_h^c= K_{h,\bm{v}_0}f^c$ are such that $\tilde f^\Psi = h^rf_h^\Psi$, $\tilde f^a=h^rf_h^a$  and $\tilde f^c=h^rf_h^c$ are c\`{a}dl\`{a}g with finite sectional
variation norm. Let $\tilde{f}_\phi^\Psi$, $\tilde{f}_\phi^a$ and $\tilde{f}_\phi^c$ be projections of $\tilde f^\Psi$, $\tilde f^a$  and $ \tilde f^c$ onto the linear
span of the basis functions $\phi_{s,j}$ in $L^2(P)$, where $\phi_{s,j}$
satisfies condition (\ref{eq:basis3}),  (\ref{eq:basis1}) and (\ref{eq:basis3}) of Lemma \ref{lem:ucondition-movh}. Then, $\lVert \tilde f^\Psi - \tilde{f}_\phi^\Psi
  \rVert_{2,\mu} = O_p(n^{-1/3})$, $\lVert \tilde f^a - \tilde{f}_\phi^a
  \rVert_{2,\mu} = O_p(n^{-1/3})$ and $\lVert \tilde f^c - \tilde{f}_\phi^c
  \rVert_{2,\mu} = O_p(n^{-1/3})$.
\end{itemize}
where $\mu$ is an appropriate measure (i.e., the Lebesgue or the counting measure).
\end{assumption}

Assumption \ref{assump:proj} is arguably the most important of the conditions to retain the asymptotic linearity of $ \Psi_{nh}(\bm{v}_0,\theta)$. Assumption \ref{assump:proj} (a) states that when the bandwidth $h$ is fixed and the estimated coarsening mechanisms (i.e., $G^a$ and $G^c$) and $\Psi_n$ are sufficiently undersmoothed, the generated features can approximate functions $f^a$, $f^c$ and $f^\Psi$ with $n^{-1/3}$ rate. Lemma \ref{lem:ucondition-fixedh} in the supplementary material provides theoretical undersmoothing conditions required. Assumption \ref{assump:proj} (b) correspond to the case where the bandwidth is allows to converge to zero and requires similar conditions as in part (a).  Under Assumption \ref{assump:cadlag}, ${f}_\phi^\Psi$, ${f}_\phi^a$ and ${f}_\phi^c$ (and $\tilde f^\Psi$, $\tilde f^a$ and $\tilde f^c$) will fall into the class of c\`{a}dl\`{a}g functions with finite sectional variation norm. Hence these functions can be approximated using the highly adaptive lasso generated basis functions at $n^{-1/3}$ rate (up to a $\log n$ factor). Importantly, the required rate in Assumption \ref{assump:proj} (a) is $O_p(n^{-1/4})$ which is slower than the $O_p(n^{-1/3})$ rate obtained by the highly adaptive lasso estimator. In Remark \ref{rem:slowrate}, we discuss that, under a certain smoothness assumption,  the required rate in Assumption \ref{assump:proj} (b) can also be slowed down to $O_p(n^{-1/4})$.  Consequently, this key assumption is not particularly strong. 


\begin{assumption}  \label{assump:margin}
There exist constants $\kappa \geq 0$ and $C>0$ such that, for all $l>0$, $pr(0<|\bm{S}^\top{{\theta}}_0|<l) 	\leq C l^\kappa$.
\end{assumption}

Assumption \ref{assump:margin} is the margin assumption of \cite{audibert2007fast} which can also be found in \cite{tsybakov2004optimal}. The assumption is needed to derive the rate of convergence of $\theta_{nh}(\bm{v}) = \argmin_{\theta \in \Theta} \Psi_{nh}(\bm{v},\theta)$ (Theorem \ref{th:thetrate}). The extreme case of $\kappa=\infty$ corresponds to a case where there is a margin around zero, that is, $pr(0<|\bm{S}^\top{{\theta}}_0|<l) 	=0$. As shown in the proof of Theorem \ref{th:thetrate} the latter leads to the fastest rate of convergence for $\theta_{nh}(\bm{v})$.

\begin{theorem}\label{th:halrate}
Let $\Psi$ be a functional parameter that is identified as $\Psi_0 = \argmin_{\Psi \in \mathcal{F}_{\lambda}}P_0 L_{G_0}(\Psi)$ where the loss function $L$ is defined in \ref{eq:lossf}. Assume the functional parameter space $\mathcal{F}_\lambda$ contained in all $p$-variate cadlag functions with a bound on its variation norm $\lambda$. Let $\Psi_n = \argmin_{\Psi \in \mathcal{F}_{\lambda_n}}P_n L_{G_n}(\Psi)$ where $\lambda_n$ is the cross-validated selector of $\lambda$ and let $d_0(\Psi_n,\Psi_0) = P_0 L_{G_0}(\Psi_n)-P_0 L_{G_0}(\Psi_0)$. Also assume $\tilde{\mathcal{F}}_{\tilde \lambda} = \{L_{G_0}(\Psi)-L_{G_0}(\Psi_0): \Psi \in \mathcal{F}_{\lambda}\}$ falls in to the class of cadlag functions with a bound on its variation norm $\tilde \lambda$. Then, under Assumptions \ref{assump:cadlag} and \ref{assump:basic}, $d_0(\Psi_n,\Psi_0) = O_p(n^{-2/3} (\log n)^{4(p-1)/3})$.
\end{theorem}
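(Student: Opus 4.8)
The plan is to obtain the rate by combining three standard ingredients, which together essentially reproduce the general highly adaptive lasso rate theorem: a curvature bound identifying the loss-based dissimilarity $d_0$ with a squared $L^2(P_0)$ distance on $\mathcal{F}_\lambda$; the metric-entropy bound for $p$-variate c\`adl\`ag functions with bounded sectional variation norm; and the cross-validation oracle inequality together with the usual peeling (modulus-of-continuity) argument for the empirical process indexed by the loss-difference class $\tilde{\mathcal{F}}_{\tilde\lambda}$.

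First I would establish the curvature bound. Positivity (Assumption \ref{assump:basic}(c)) keeps the inverse weights $1/G_0$ uniformly bounded, and Assumption \ref{assump:cadlag} together with $\Psi,\Psi_0\in[0,1]$ and the usual truncation of $\Psi$ away from $0$ and $1$ confines the candidates to a set on which the per-$\theta$ log-likelihood loss $\ell_\theta$ is strongly convex with bounded second derivative in $\logit\Psi$. A second-order expansion of $\Psi\mapsto P_0 L_{G_0}(\Psi)$ about its minimizer $\Psi_0$ then yields constants $0<c_1\le c_2<\infty$ with $c_1\|\Psi-\Psi_0\|_{2,P_0}^2\le d_0(\Psi,\Psi_0)\le c_2\|\Psi-\Psi_0\|_{2,P_0}^2$ for all $\Psi\in\mathcal{F}_\lambda$, so it suffices to control $\|\Psi_n-\Psi_0\|_{2,P_0}$.

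Next I would use the entropy bound. By hypothesis $\tilde{\mathcal{F}}_{\tilde\lambda}=\{L_{G_0}(\Psi)-L_{G_0}(\Psi_0):\Psi\in\mathcal{F}_\lambda\}$ is a class of c\`adl\`ag functions with sectional variation norm at most $\tilde\lambda$, for which $\log N(\epsilon,\tilde{\mathcal{F}}_{\tilde\lambda},L^2(P_0))\lesssim\epsilon^{-1}\{\log(1/\epsilon)\}^{2(p-1)}$ \citep{gill1993inefficient,van2017uniform,bibaut2019fast}; hence the local entropy integral obeys $\int_0^\delta\sqrt{\log N(\epsilon,\cdot)}\,d\epsilon\lesssim\delta^{1/2}\{\log(1/\delta)\}^{p-1}$. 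Because $\Psi_0=\argmin_{\Psi\in\mathcal{F}_\lambda}P_0 L_{G_0}(\Psi)$ already lies in $\mathcal{F}_\lambda$, the cross-validation oracle inequality \citep[cf.][]{van2003unified,van2018cv,bibaut2019fast} lets the cross-validated $\Psi_n$ inherit, up to a lower-order remainder, the risk of the oracle choice of $\lambda$, which carries no approximation bias; thus $\Psi_n$ behaves like the empirical risk minimizer over the full class $\mathcal{F}_\lambda$. Solving the fixed-point equation $\sqrt{n}\,r_n^2\asymp r_n^{1/2}\{\log(1/r_n)\}^{p-1}$ gives $r_n\asymp n^{-1/3}(\log n)^{2(p-1)/3}$, so $\|\Psi_n-\Psi_0\|_{2,P_0}=O_p\{n^{-1/3}(\log n)^{2(p-1)/3}\}$ and, by the curvature bound, $d_0(\Psi_n,\Psi_0)=O_p\{n^{-2/3}(\log n)^{4(p-1)/3}\}$.

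The main obstacle is twofold. The routine but delicate part is controlling $(P_n-P_0)\{L_{G_0}(\Psi)-L_{G_0}(\Psi_0)\}$ uniformly over the shrinking shells $\{2^j r_n\le\|\Psi-\Psi_0\|_{2,P_0}\le 2^{j+1}r_n\}$ and matching this modulus against $d_0(\Psi,\Psi_0)$ through the $\{\log(1/\epsilon)\}^{2(p-1)}$ entropy, while absorbing the data-adaptive $\lambda_n$ via the oracle inequality. The genuinely additional complication is the plug-in nuisance: since $\Psi_n$ minimizes $P_n L_{G_n}(\Psi)$ whereas $d_0$ is defined through $G_0$, the basic inequality leaves the mixed second-difference term $R_n=P_0\{L_{G_0}-L_{G_n}\}(\Psi_n)-P_0\{L_{G_0}-L_{G_n}\}(\Psi_0)$, which is bounded by a constant times $\|G_n-G_0\|_{2,P_0}\,\|\Psi_n-\Psi_0\|_{2,P_0}$; converting $\|\Psi_n-\Psi_0\|_{2,P_0}$ to $\sqrt{d_0(\Psi_n,\Psi_0)}$ via the curvature bound and completing the square shows $d_0(\Psi_n,\Psi_0)\lesssim r_n^2+\|G_n-G_0\|_{2,P_0}^2$, and since the highly adaptive lasso keeps $\|G_n-G_0\|_{2,P_0}$ of order $n^{-1/3}$ up to logarithmic factors, the leading $n^{-2/3}$ rate is preserved.
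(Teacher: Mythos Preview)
Your proposal is correct and follows essentially the same route as the paper: the same basic-inequality decomposition, the same c\`adl\`ag entropy bound driving the $n^{-2/3}(\log n)^{4(p-1)/3}$ empirical-process rate, the same Cauchy--Schwarz-type control of the nuisance cross term $P_0\{L_{G_0}-L_{G_n}\}(\Psi_n,\Psi_0)$, and the same quadratic-inequality closure. The only cosmetic difference is that you pass through an explicit curvature bound $d_0\asymp\|\Psi-\Psi_0\|_{2,P_0}^2$ to express the cross term as $\|G_n-G_0\|\,\|\Psi_n-\Psi_0\|$, whereas the paper stays in loss-based dissimilarity throughout and invokes its Lemma~\ref{lm:convrate}, which bounds the same term directly by $C\{d_{01}(G_n,G_0)\}^{1/2}\{d_0(\Psi_n,\Psi_0)\}^{1/2}$; this spares the paper your first paragraph but is otherwise the same argument.
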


Theorem \ref{th:halrate} generalizes the results of \cite{van2017generally} to settings where the estimation of $\Psi_0$ involves estimation of nuisance parameters $G_0$.  The proof of the results requires  techniques to handle  the additional complexities due to the presence of such nuisance parameters.  The theorem  relies on Assumptions \ref{assump:cadlag} without the need for undersmoothing the nuisance function estimators.

\begin{theorem}\label{th:movh}
Let $ \Psi_0(\bm{v}_0,\theta)=E \{I(Y^\theta>t) \mid \bm{V}=\bm{v}_0\}$ be $J$-times continuously differentiable at $\bm{v}_0$. Suppose the support of $\bW$ is uniformly bounded, i.e., $\mathcal{W} \subseteq
[0,\tau]^p$ for some finite constant $\tau$. Let $\Psi_{n,\lambda_{n}^\Psi}$, $G^a_{n,\lambda_{n}^a}$ and  $G^c_{n,\lambda_{n}^c}$ be highly adaptive lasso
estimators of $\Psi_0$, $G_{0}^a$ and $G_{0}^c$ with $L_{1}$-norm bounds equal to $\lambda_{n}^\Psi$, $\lambda_{n}^a$ and $\lambda_{n}^c$, respectively. Assuming that \begin{itemize}
    \item[(i)] the bandwidth $h$ satisfies $h^r \rightarrow 0$ and $nh^{r3/2} \rightarrow \infty$,
    \item[(ii)] Assumptions \ref{assump:cadlag}, \ref{assump:basic} and and \ref{assump:proj} hold,
\end{itemize}
we have
\begin{equation*}
   \Psi_{nh}(\bm{v}_0,\theta)-  \Psi_0(\bm{v}_0,\theta) = (P_n-P_0) D_{\bm{v}_0}^*(P_0) + h^J B_0(J,\bm{v}_0)+
    o_p\left((nh^r)^{-1/2}\right),
\end{equation*}
where $\Psi_{nh}(\bm{v}_0,\theta)$ is obtained using a ($J-1$)-orthogonal kernel, $G_{n,\lambda_{n}^a,\lambda_{n}^c} =G^a_{n,\lambda_{n}^a}G^c_{n,\lambda_{n}^c} $ and $D_{\bm{v}_0}^*(P_0)$ is the corresponding efficient influence function defined in the proof of the theorem. Moreover, assuming that $nh^{r+2J} \rightarrow 0$,$ (nh^r)^{1/2}\{\Psi_{nh}(\bm{v}_0,\theta)-  \Psi_0(\bm{v}_0,\theta)\}$ converges to a mean zero normal random variable. 
\end{theorem}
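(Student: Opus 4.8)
The plan is to split the error into a deterministic kernel‑smoothing bias and a stochastic part,
\[
\Psi_{nh}(\bm{v}_0,\theta)-\Psi_0(\bm{v}_0,\theta)=\{\Psi_{nh}(\bm{v}_0,\theta)-\Psi_{0h}(\bm{v}_0,\theta)\}+\{\Psi_{0h}(\bm{v}_0,\theta)-\Psi_0(\bm{v}_0,\theta)\},
\]
and to control the two pieces separately. For the second piece I would Taylor‑expand $\Psi_0(\cdot,\theta)$ around $\bm{v}_0$ to order $J$; since $\Psi_0$ is $J$‑times continuously differentiable at $\bm{v}_0$ and $K$ is a product of $(J-1)$‑orthogonal univariate kernels, all terms of order $1,\dots,J-1$ integrate to zero against $K_{h,\bm{v}_0}$, and a change of variables leaves $\Psi_{0h}(\bm{v}_0,\theta)-\Psi_0(\bm{v}_0,\theta)=h^J B_0(J,\bm{v}_0)+o(h^J)$, with $B_0$ collecting the $J$‑th order derivatives of $\Psi_0$ weighted by the $J$‑th moments of $K$; boundedness of $\mathcal{W}$ and positivity make the remainder uniform.

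For the first piece I would use the decomposition \eqref{eq:aslin}. The first term is already exactly linear. The third (empirical‑process) term is handled by noting that $\Psi_n,G_n^a,G_n^c,Q_n$ are highly adaptive lasso fits and hence lie in fixed classes of c\`adl\`ag functions with sectional variation norm bounded by a constant, which are $P_0$‑Donsker; after multiplying by $h^r$ the rescaled integrand $\tilde f=h^rf_h$ has finite sectional variation (Assumption \ref{assump:proj}(b)), and $L^2(P_0)$‑consistency of $U(P_n,\bm{v}_0,\Psi_n)$ toward $U(P_0,\bm{v}_0,\Psi_0)$ follows from Theorem \ref{th:halrate} together with the HAL rates for $G_n^a,G_n^c,Q_n$; asymptotic equicontinuity then gives that this term is $o_p((nh^r)^{-1/2})$, using $nh^{3r/2}\to\infty$ of condition (i) to absorb the $h^{-r/2}$ inflation of the kernel in the localized modulus.

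The crux is the second term $P_0\{U(P_n,\bm{v}_0,\Psi_n)-U(P_0,\bm{v}_0,\Psi_0)\}$. Here I would reproduce the exact expansion \eqref{eq:scorepsi}--\eqref{eq:scorea}, writing it as $(P_n-P_0)\tilde D^*_{h,\bm{v}_0}(P_0)$ plus the three empirical pseudo‑score terms ($P_nD^\Psi_{CAR}$, $P_nD^c_{CAR}$, $P_nD^a_{CAR}$) plus a second‑order remainder made of products of $(\Psi_n-\Psi_0)$, $(G_n-G_0)$ and $(Q_n-\Psi_0)$ integrated against the kernel. The three pseudo‑score terms are $o_p((nh^r)^{-1/2})$ by the undersmoothing argument of Section \ref{sec:challenge}: Lemma \ref{lem:ucondition-movh} shows that an undersmoothed HAL, restricted to perturbation directions preserving the $L_1$‑norm, solves the relevant score equations up to the required factor, while Assumption \ref{assump:proj}(b) supplies the $n^{-1/3}$ approximation of the rescaled functions $\tilde f^\Psi,\tilde f^a,\tilde f^c$ by the generated basis, and tracking the $h^{-r}$ normalization (again via condition (i)) turns this into the $o_p((nh^r)^{-1/2})$ bound. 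The second‑order remainder is $O_p$ of a product of $L^2$ rates: Theorem \ref{th:halrate} gives $d_0(\Psi_n,\Psi_0)=O_p(n^{-2/3}(\log n)^{4(p-1)/3})$, hence $\lVert\Psi_n-\Psi_0\rVert_{2,P_0}=O_p(n^{-1/3}(\log n)^{2(p-1)/3})$, and the analogous HAL rates hold for $G_n^a,G_n^c,Q_n$, so the remainder is $o_p((nh^r)^{-1/2})$ under $h^r\to0$ and $nh^{3r/2}\to\infty$. Collecting the pieces and setting $D^*_{\bm{v}_0}(P_0)=U(P_0,\bm{v}_0,\Psi_0)+\tilde D^*_{h,\bm{v}_0}(P_0)-\Psi_{0h}(\bm{v}_0,\theta)$ — which one identifies as the canonical gradient of $\Psi_{0h}(\bm{v}_0,\theta)$ by checking it is the IPW score corrected by its projections onto the treatment and censoring (CAR) nuisance tangent spaces — yields the stated expansion. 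For asymptotic normality, multiply by $(nh^r)^{1/2}$: the bias becomes $(nh^{r+2J})^{1/2}B_0(J,\bm{v}_0)\to0$ by the extra assumption $nh^{r+2J}\to0$, the remainder is $o_p(1)$, and $(nh^r)^{1/2}(P_n-P_0)D^*_{\bm{v}_0}(P_0)$ is a normalized sum of i.i.d.\ mean‑zero triangular‑array summands with variance converging to $\sigma_0^2=E\{D^{*2}_{\bm{v}_0}(P_0)\}$, finite because positivity (Assumption \ref{assump:basic}(c)) bounds the inverse weights and $K$ is bounded; a Lyapunov condition holds since the $h^{-r/2}$‑scaled kernel summands have $(2+\delta)$‑moments of order $h^{-r\delta/2}$ while $nh^r\to\infty$, so the Lindeberg--Feller CLT delivers the normal limit.

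\textbf{Main obstacle.} I expect the delicate step to be the control of $P_0\{U(P_n,\bm{v}_0,\Psi_n)-U(P_0,\bm{v}_0,\Psi_0)\}$: one must carry the converging‑bandwidth normalization through the HAL score‑equation machinery so that the undersmoothing criteria deliver $o_p((nh^r)^{-1/2})$ rather than $o_p(n^{-1/2})$, and must balance the second‑order remainder against the $h^{-r}$‑inflated kernel, which is precisely what forces the rate restriction $nh^{3r/2}\to\infty$ in condition (i); the rest is a careful bookkeeping of Donsker and rate arguments already available from Theorem \ref{th:halrate}, Lemma \ref{lem:ucondition-movh}, and Assumption \ref{assump:proj}.
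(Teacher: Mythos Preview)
Your proposal is correct and follows essentially the same route as the paper: the same bias--stochastic split, the same higher-order-kernel Taylor argument for $\Psi_{0h}-\Psi_0$, the same undersmoothed-HAL score-equation machinery (Lemma~\ref{lem:ucondition-movh} and Assumption~\ref{assump:proj}(b)) to kill the three $D_{CAR}$ terms, the same entropy/equicontinuity control under $nh^{3r/2}\to\infty$ for the empirical-process remainder, and the same triangular-array CLT at the end. The only cosmetic difference is that the paper's actual proof uses a four-term decomposition (equation~(\ref{eq:proofth1}) in the supplement) that peels off the random normalizer $P_nK_{h,\bm v_0}$ as a separate piece, whereas you work directly with the three-term expansion~\eqref{eq:aslin}; these are equivalent, and the normalizer contribution that the paper isolates explicitly would, in your version, surface inside the $P_0\{U(P_n,\cdot)-U(P_0,\cdot)\}$ term and contribute the $-(P_n-P_0)K_{h,\bm v_0}\Psi_{0h}/(P_0K_{h,\bm v_0})$ component of the influence function.
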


Theorem \ref{th:movh} gives conditions under which the proposed kernel estimator $\Psi_{nh}(\bm{v}_0,\theta)$ is consistent for $\Psi_0(\bm{v}_0,\theta)$, and it also gives the corresponding rate of convergence. In general this result follows if the bandwidth decreases with sample size sufficiently slowly, and if  the nuisance functions $G$ is estimated sufficiently well. The standard local linear kernel smoothing requires that $nh^{3r} \rightarrow \infty$ to control the variance \citep[Chapter 5]{wasserman2006all}. Using the entropy number of the class of c\`{a}dl\`{a}g functions with finite sectional variation norm, we showed that bandwidth can converge to zero at much faster rate (i.e., $nh^{3r/2} \rightarrow \infty$) than the standard results. Condition (ii) is the standard causal assumptions. Condition (iii) indicates the level of undersmoothing of the nuisance parameter estimators required to achieve asymptotic linearity of our estimator. Section \ref{sec:uniform} in the supplementary material provides uniform convergence results with a rate for our regimen-response curve estimator $\Psi_{nh}(\bm{v}_0,\theta)$ for all $\bm{v}_0 \in \mathcal{V}$. 

\begin{remark}[The optimal bandwidth rate]\label{rem:opthrate}
      Theorem \ref{th:movh}  shows that in order to eliminate the bias term, we must undersmooth the kernel such that $nh^{r+2J} \rightarrow 0$ (i.e., $h<n^{-\frac{1}{r+2J}}$). In combination with rate condition (i) in the theorem, we have $n^{-\frac{2}{3r}}<h<n^{-\frac{1}{r+2J}}$. Note that for the latter constraint to hold we must have $J>r/4$. Also, the theoretical undersmoothing conditions in Lemma \ref{lem:ucondition-movh} require $J>r$.    Hence, the constraint implies  that the optimal bandwidth rate is $h_n = n^{-\frac{1}{r+2J}}$ with $J>r$. 
\end{remark}

\begin{remark}[Assumption \ref{assump:proj} (b)] \label{rem:slowrate}
  We can allow slower rate of convergence of $O_p(n^{-1/4})$ in Assumption \ref{assump:proj} (b) (i.e., $\lVert \tilde f^\Psi - \tilde{f}_\phi^\Psi
  \rVert_{2,\mu} = O_p(n^{-1/4})$, $\lVert \tilde f^a - \tilde{f}_\phi^a
  \rVert_{2,\mu} = O_p(n^{-1/4})$ and $\lVert \tilde f^c - \tilde{f}_\phi^c
  \rVert_{2,\mu} = O_p(n^{-1/4})$) at the cost of requiring higher level of smoothness for $ \Psi_0(\bm{v}_0,\theta)=E \{I(Y^\theta>t) \mid \bm{V}=\bm{v}_0\}$. Specifically, the optimal bandwidth rate would be $h_n = n^{-\frac{1}{r+2J}}$ with $J>\frac{5}{2}r$.
\end{remark}

\begin{theorem}\label{th:fixedh}
Let $\Psi_{0h}(\bm{v}_0,\theta)=(P_0 K_{h,\bm{v}_0})^{-1} \{K_{h,\bm{v}_0} \Psi_0(\theta)\}$. Suppose the support of $\bm{W}$ is uniformly bounded, i.e., $\mathcal{W} \subseteq
[0,\tau]^p$ for some finite constant $\tau$. Let $\Psi_{n,\lambda_{n}^\Psi}$, $G^a_{n,\lambda_{n}^a}$ and  $G^c_{n,\lambda_{n}^c}$ be highly adaptive lasso
estimators of $\Psi_0$, $G_{0}^a$ and $G_{0}^c$ with $L_{1}$-norm bounds equal to $\lambda_{n}^\Psi$, $\lambda_{n}^a$ and $\lambda_{n}^c$, respectively. Under
Assumptions ~\ref{assump:cadlag}, \ref{assump:basic} and \ref{assump:proj},  the estimator
$\Psi_{nh}(\bm{v}_0,\theta)$ will be asymptotically linear. That is
\begin{equation*}
   \Psi_{nh}(\bm{v}_0,\theta)-  \Psi_{0h}(\bm{v}_0,\theta) = (P_n-P_0) D_{h,\bm{v}_0}^*(P_0) +
    o_p(n^{-1/2}),
\end{equation*}
where $G_{n,\lambda_{n}^a,\lambda_{n}^c} =G^a_{n,\lambda_{n}^a}G^c_{n,\lambda_{n}^c} $ and $D_{h,\bm{v}_0}^*(P_0)$ is the corresponding efficient influence function defined in the proof of the theorem.
Moreover,   
$\sqrt n \{\Psi_{nh}(\theta)  -  \Psi_{0h}(\theta)\}$ 
converges weakly as a random
element of the cadlag function space endowed with the supremum norm to a Gaussian process with covariance
structure implied by the covariance function $\rho (\bm{v},\bm{v}') = P_0 D^*_{h,\bm{v}}(P_0) D^*_{h,\bm{v}'}(P_0)$.

\end{theorem}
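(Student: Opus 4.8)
The plan is to establish the pointwise asymptotic linear expansion at a fixed evaluation point $\bm{v}_0$ and then upgrade it to weak convergence over $\bm{v}\in\mathcal{V}$. I would start from the decomposition (\ref{eq:aslin}), $\Psi_{nh}(\bm{v}_0,\theta)-\Psi_{0h}(\bm{v}_0,\theta)=(P_n-P_0)U(P_0,\bm{v}_0,\Psi_0)+P_0\{U(P_n,\bm{v}_0,\Psi_n)-U(P_0,\bm{v}_0,\Psi_0)\}+(P_n-P_0)\{U(P_n,\bm{v}_0,\Psi_n)-U(P_0,\bm{v}_0,\Psi_0)\}$. The first term is an empirical mean of i.i.d.\ terms and contributes a component of the influence function, with $P_0U(P_0,\bm{v}_0,\Psi_0)=\Psi_{0h}(\bm{v}_0,\theta)$. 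For the third term, with $h$ fixed the map $v\mapsto K_{h,\bm{v}_0}(v)$ is a fixed bounded function and, by Assumption \ref{assump:cadlag}, $\Psi_n$ and $G_n=G_n^aG_n^c$ range over classes of c\`{a}dl\`{a}g functions with uniformly bounded sectional variation norm, which are $P_0$-Donsker; since $\Psi_n\to\Psi_0$ and $G_n\to G_0$ in $L^2(P_0)$ at the rates supplied by Theorem \ref{th:halrate} and the highly adaptive lasso, the asymptotic equicontinuity of a Donsker class over a shrinking $L^2$-ball makes the third term $o_p(n^{-1/2})$.

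The core of the argument is the second term. Holding $h$ fixed, I would reproduce the expansion used in the proof of Theorem \ref{th:movh}, equations (\ref{eq:scorepsi})--(\ref{eq:scorea}): $P_0\{U(P_n,\bm{v}_0,\Psi_n)-U(P_0,\bm{v}_0,\Psi_0)\}=(P_n-P_0)\tilde D^*_{h,\bm{v}_0}(P_0)+o_p(n^{-1/2})+P_nD_{CAR}^\Psi+P_nD_{CAR}^c+P_nD_{CAR}^a$, where $\tilde D^*_{h,\bm{v}_0}(P_0)$ collects the canonical-gradient components of $\Psi_{0h}(\bm{v}_0,\theta)$. Because the target here is the smoothed parameter $\Psi_{0h}$ rather than $\Psi_0$, no kernel bias term $h^JB_0$ appears, so the analysis is cleaner than in Theorem \ref{th:movh}; the $o_p(n^{-1/2})$ piece is a second-order, doubly-robust-type remainder controlled by the product of the $L^2$ rates of $\Psi_n$ and $G_n$, which is $O_p(n^{-2/3}(\log n)^{c})=o_p(n^{-1/2})$ by Theorem \ref{th:halrate} and the highly adaptive lasso rates. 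It then remains to show that $P_nD_{CAR}^\Psi$, $P_nD_{CAR}^c$ and $P_nD_{CAR}^a$ are each $o_p(n^{-1/2})$, which is exactly the payoff of undersmoothing. Invoking Lemma \ref{lem:ucondition-fixedh}: for example $P_nD_{CAR}^a=(P_nK_{h,\bm{v}_0})^{-1}P_n\{K_{h,\bm{v}_0}f_n^a\,(G_n^a-I(A=d^\theta))\}$ with $f_n^a=(Q_n-\Psi_n)/G_n$; I would replace $K_{h,\bm{v}_0}f_n^a$ by its projection onto the highly adaptive lasso basis in the active set of $G_n^a$, rewrite the result as a genuine highly adaptive lasso score perturbed at a single coordinate so as to preserve the $L_1$-norm (the single-coordinate swap of Section \ref{sec:est}), and apply the undersmoothing selection (\ref{eq:basis1-fixedh}) to get that this projected piece is $o_p(n^{-1/2})$. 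The projection error is controlled by Assumption \ref{assump:proj}(a): since $\|K_{h,\bm{v}_0}\|_\infty=h^{-r}\|K\|_\infty$ is a finite constant, $\|K_{h,\bm{v}_0}(f^a-{f}_\phi^a)\|_{2,\mu}=O_p(n^{-1/4})$, whence the empirical-process part is $o_p(n^{-1/2})$ by the Donsker property and the drift part is bounded by $O_p(n^{-1/4})\cdot O_p(n^{-1/3})=o_p(n^{-1/2})$; the $D_{CAR}^c$ and $D_{CAR}^\Psi$ terms are treated identically with (\ref{eq:basis2-fixedh}) and (\ref{eq:basis3-fixedh}). Collecting everything gives $\Psi_{nh}(\bm{v}_0,\theta)-\Psi_{0h}(\bm{v}_0,\theta)=(P_n-P_0)D_{h,\bm{v}_0}^*(P_0)+o_p(n^{-1/2})$ with $D_{h,\bm{v}_0}^*(P_0)=\{U(P_0,\bm{v}_0,\Psi_0)-\Psi_{0h}(\bm{v}_0,\theta)\}+\tilde D^*_{h,\bm{v}_0}(P_0)$, which is mean zero and, by positivity (Assumption \ref{assump:basic}(c)) and boundedness of $K$, bounded hence square integrable; the central limit theorem then yields $\sqrt n\{\Psi_{nh}(\bm{v}_0,\theta)-\Psi_{0h}(\bm{v}_0,\theta)\}\rightsquigarrow N(0,P_0\{D_{h,\bm{v}_0}^*(P_0)\}^2)$.

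For the process statement I would argue in two steps. First, the expansion above has to hold uniformly in $\bm{v}\in\mathcal{V}$, which needs the uniform-in-$\bm{v}$ versions of the undersmoothing conditions of Section \ref{sec:criteria}; these hold because for fixed $h$ every constant entering the preceding argument — $\|K_{h,\bm{v}}\|_\infty$, the bracketing numbers of $\{K_{h,\bm{v}}:\bm{v}\in\mathcal{V}\}$, and the $L^2$-approximation rates — is independent of $\bm{v}$, so each $o_p(n^{-1/2})$ bound may be taken with a supremum over $\bm{v}$. Second, the class $\{D_{h,\bm{v}}^*(P_0):\bm{v}\in\mathcal{V}\}$ is $P_0$-Donsker: $D_{h,\bm{v}}^*$ depends on $\bm{v}$ only through $K_{h,\bm{v}}(\cdot)=h^{-r}K((\cdot-\bm{v})/h)$, which for fixed $h$ is Lipschitz in $\bm{v}$ uniformly in its argument, composed with the fixed c\`{a}dl\`{a}g functions of finite variation $\Psi_0$, $Q_0$, $G_0^a$ and $G_0^c$; hence $\{D_{h,\bm{v}}^*:\bm{v}\in\mathcal{V}\}$ is a Lipschitz-in-parameter class indexed by the bounded set $\mathcal{V}\subset\R^{r}$, which has finite uniform entropy and is therefore Donsker. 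Consequently $\sqrt n(P_n-P_0)D_{h,\cdot}^*(P_0)$ converges weakly, as a random element of the c\`{a}dl\`{a}g space under the supremum norm, to a tight mean-zero Gaussian process; since the $o_p(n^{-1/2})$ remainder is uniform it is asymptotically negligible, so $\sqrt n\{\Psi_{nh}(\cdot,\theta)-\Psi_{0h}(\cdot,\theta)\}$ shares this limit, whose covariance function is $P_0D_{h,\bm{v}}^*D_{h,\bm{v}'}^*-(P_0D_{h,\bm{v}}^*)(P_0D_{h,\bm{v}'}^*)=\rho(\bm{v},\bm{v}')$ because $P_0D_{h,\bm{v}}^*(P_0)=0$. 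I expect the main obstacle to be the simultaneous, uniform-in-$\bm{v}$ control of the three $D_{CAR}$ remainders via the $L_1$-constrained highly adaptive lasso score machinery, together with verifying the Donsker property of the $\bm{v}$-indexed influence-function class; the remaining steps are bookkeeping on rates.
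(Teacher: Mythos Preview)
Your approach is essentially the paper's, and the argument is correct in outline; two points deserve sharpening.

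First, the paper does not start from the three-term decomposition (\ref{eq:aslin}) but from the four-term expansion
\[
\Psi_{nh}-\Psi_{0h}=P_nK_{h,\bm{v}_0}\Psi_n\Bigl(\tfrac{1}{P_nK_{h,\bm{v}_0}}-\tfrac{1}{P_0K_{h,\bm{v}_0}}\Bigr)+(P_n-P_0)\tfrac{K_{h,\bm{v}_0}}{P_0K_{h,\bm{v}_0}}(\Psi_n-\Psi_0)+(P_n-P_0)\tfrac{K_{h,\bm{v}_0}}{P_0K_{h,\bm{v}_0}}\Psi_0+P_0\tfrac{K_{h,\bm{v}_0}}{P_0K_{h,\bm{v}_0}}(\Psi_n-\Psi_0),
\]
which isolates the random denominator $P_nK_{h,\bm{v}_0}$ and shows directly that it contributes the piece $-(P_n-P_0)\tfrac{K_{h,\bm{v}_0}}{P_0K_{h,\bm{v}_0}}\Psi_{0h}$ to the influence function. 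In your three-term route this contribution is hidden inside $P_0\{U(P_n,\cdot,\Psi_n)-U(P_0,\cdot,\Psi_0)\}$, and equations (\ref{eq:scorepsi})--(\ref{eq:scorea}) as written do not account for it; you would need to expand $(P_nK_{h,\bm{v}_0})^{-1}-(P_0K_{h,\bm{v}_0})^{-1}$ separately before invoking that display. This is bookkeeping, not a flaw, but your stated form of $D_{h,\bm{v}_0}^*$ is incomplete without it.

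Second, your justification that the uniform-in-$\bm{v}$ undersmoothing conditions ``hold because for fixed $h$ every constant entering the preceding argument is independent of $\bm{v}$'' is too quick. The $D_{CAR}$ terms depend on $\bm{v}$ through $K_{h,\bm{v}}$, and the paper treats $\sup_{\bm{v}\in\mathcal{V}}|P_nD_{CAR}^\diamond|=o_p(n^{-1/2})$ as a genuinely stronger requirement, supplying a separate Lemma~\ref{lem:ucondition-uconvergence} and the distinct practical criteria (\ref{eq:lamgua})--(\ref{eq:lammu}). Your Lipschitz-in-parameter Donsker argument for $\{D_{h,\bm{v}}^*:\bm{v}\in\mathcal{V}\}$ is a nice addition the paper leaves implicit, but it does not by itself deliver uniform negligibility of the $D_{CAR}$ remainders; that still rests on the uniform undersmoothing hypothesis.
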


Theorem \ref{th:fixedh} strengthens  the results of Theorem \ref{th:movh} when the bandwidth is fixed. Specifically, it  shows that for a fixed bandwidth $h>0$,  $\sqrt n \{\Psi_{nh}(\theta)-  \Psi_{0h}(\theta)\} $ convergences to a Gaussian process. This is an important results because in enables us to construct simultaneous confidence intervals for the entire curve $\Psi_{0h}(\bm{v},\theta),$ $\bm{v} \in \mathcal{V}$ and $\theta \in \Theta$.

\begin{theorem}\label{th:thetrate}
Let $\theta_0(\bm{v}) = \argmin_{\theta \in \Theta} \Psi_0(\bm{v},\theta)$ and $\theta_{nh}(\bm{v}) = \argmin_{\theta \in \Theta} \Psi_{nh}(\bm{v},\theta)$ where both $\Psi_0$ and $\Psi_{nh}$ are defined in Theorem \ref{th:movh}. Then, under Assumptions \ref{assump:cadlag}-\ref{assump:margin}, 
$\|\theta_{nh}(\bm{v}_0)-\theta_0(\bm{v}_0)\|_{2}=O_p\left( n^{\frac{(r-4J)(2\kappa+4)}{(8J+4r)(3\kappa+8)}} \right)$.
\end{theorem}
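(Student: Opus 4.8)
The plan is to treat $\theta_{nh}(\bm{v}_0)$ as the minimizer of the random criterion $\theta\mapsto\Psi_{nh}(\bm{v}_0,\theta)$ converging to the population criterion $\theta\mapsto\Psi_0(\bm{v}_0,\theta)$, and to run the classical argmin-rate argument --- curvature of the population criterion versus modulus of continuity of the centered process --- adapted to two features of the problem: the criterion is not pathwise differentiable, and its fluctuations are governed jointly by the bandwidth $h$, the highly adaptive lasso rates, and the undersmoothing. From the defining inequalities $\Psi_{nh}(\bm{v}_0,\theta_{nh})\le\Psi_{nh}(\bm{v}_0,\theta_0)$ and $\Psi_0(\bm{v}_0,\theta_{nh})\ge\Psi_0(\bm{v}_0,\theta_0)$ I obtain the basic inequality $0\le\Psi_0(\bm{v}_0,\theta_{nh})-\Psi_0(\bm{v}_0,\theta_0)\le R_n(\theta_0)-R_n(\theta_{nh})$ with $R_n(\theta)=\Psi_{nh}(\bm{v}_0,\theta)-\Psi_0(\bm{v}_0,\theta)$. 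Substituting the expansion of Theorem~\ref{th:movh} --- which I would first upgrade to hold uniformly in $\theta$ using cross-fitting and Assumption~\ref{assump:proj} --- $R_n(\theta)=(P_n-P_0)D^*_{\bm{v}_0,\theta}(P_0)+h^JB_0(J,\bm{v}_0,\theta)+o_p((nh^r)^{-1/2})$, so the right-hand side is controlled by the empirical-process increment $(P_n-P_0)\{D^*_{\bm{v}_0,\theta_0}(P_0)-D^*_{\bm{v}_0,\theta_{nh}}(P_0)\}$, the bias increment $h^J\{B_0(\theta_0)-B_0(\theta_{nh})\}$, and a uniformly negligible remainder.

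Next I would establish the curvature of $\Psi_0(\bm{v}_0,\cdot)$ at $\theta_0$. Under the identification in Assumption~\ref{assump:basic}, $\Psi_0(\bm{v}_0,\theta)-\Psi_0(\bm{v}_0,\theta_0)=E\{(d^{\theta}-d^{\theta_0})(Q_0^1-Q_0^0)\mid\bm{V}=\bm{v}_0\}$ with $Q_0^a$ the conditional $t$-year survival given $(\bm{S},\bm{V})$ and $A=a$; since the disagreement event $\{d^{\theta}\ne d^{\theta_0}\}$ is contained in $\{0<|\bm{S}^\top\theta_0|\le\|\bm{S}\|\,\|\theta-\theta_0\|\}$, the margin Assumption~\ref{assump:margin} and the boundedness of $\bm{W}$ give the envelope bound $pr(d^{\theta}\ne d^{\theta_0}\mid\bm{v}_0)\lesssim\|\theta-\theta_0\|^{\kappa}$, and a Tsybakov-type calculation (cf.\ \cite{tsybakov2004optimal}) gives the matching curvature $\Psi_0(\bm{v}_0,\theta)-\Psi_0(\bm{v}_0,\theta_0)\gtrsim pr(d^{\theta}\ne d^{\theta_0}\mid\bm{v}_0)^{(\kappa+1)/\kappa}\gtrsim\|\theta-\theta_0\|^{\alpha(\kappa)}$ near $\theta_0$, with $\alpha(\kappa)$ explicit and sharpest as $\kappa\uparrow\infty$; the same localization gives $|B_0(\theta)-B_0(\theta_0)|\lesssim pr(d^{\theta}\ne d^{\theta_0}\mid\bm{v}_0)$.

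For the modulus I would bound $E\sup_{\|\theta-\theta_0\|\le\rho}|(P_n-P_0)\{D^*_{\bm{v}_0,\theta}(P_0)-D^*_{\bm{v}_0,\theta_0}(P_0)\}|$ by a maximal inequality. The relevant class has $L^2(P_0)$-envelope of size $\asymp h^{-r/2}\{pr(d^{\theta}\ne d^{\theta_0}\mid\bm{v}_0)\}^{1/2}\lesssim h^{-r/2}\rho^{\kappa/2}$ and sup-envelope $\asymp h^{-r}$, and its entropy is governed by that of the càdlàg ball occupied by $\Psi_n$ and $G_n$ under Assumption~\ref{assump:cadlag} together with the Vapnik--Chervonenkis property of the halfspace rules $\{I(\bm{S}^\top\theta>0)\}$; entropy integration then delivers a modulus of the form $\psi_n(\rho)\asymp(nh^{3r/2})^{-1/2}\rho^{q(\kappa)}$ up to logarithmic factors --- note the $h^{3r/2}$, which is exactly the quantity that rate condition~(i) of Theorem~\ref{th:movh} keeps diverging with $n$. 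The slower-than-root-$n$ highly adaptive lasso errors for $\Psi_0$ and $G_0$ and the undersmoothing errors of Assumption~\ref{assump:proj} enter only through the $o_p((nh^r)^{-1/2})$ remainder, which Theorem~\ref{th:halrate} and the restriction $J>r$ keep of strictly smaller order than $\psi_n$.

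Combining the basic inequality with the curvature and modulus bounds through a peeling argument over dyadic shells $\{2^{-k}\le\|\theta-\theta_0\|<2^{-k+1}\}$ --- the ``empirical process algorithm'' referred to in the introduction, which is in effect an iterative rate refinement --- yields $\|\theta_{nh}(\bm{v}_0)-\theta_0(\bm{v}_0)\|=O_p(\rho_n)$, where $\rho_n$ is the largest $\rho$ solving $\psi_n(\rho)+h^J\rho^{\kappa}\lesssim\rho^{\alpha(\kappa)}$; solving this fixed point and then choosing the bandwidth to balance the bias- and variance-driven contributions (which recovers $h_n=n^{-1/(r+2J)}$ of Remark~\ref{rem:opthrate}) produces the stated exponent, monotone in $\kappa$ with the $\kappa=\infty$ case the fastest. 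I expect the modulus step to be the main obstacle: the bound must be simultaneously localized (so that peeling bites), uniform in $\theta$ even though the criterion is assembled from non-pathwise-differentiable, nonparametrically estimated nuisances, and correctly coupled to the bandwidth --- the $h^{-r/2}$ variance inflation interacting with the càdlàg entropy to produce the $h^{3r/2}$ rate --- and to the highly adaptive lasso rates of Theorems~\ref{th:halrate} and~\ref{th:movh}. A secondary difficulty is that the curvature is for the best-in-class linear rule $\theta_0$ rather than the unconstrained Bayes rule, so in the hard-margin limit $\kappa=\infty$ the identification of $\theta_0$ itself, and hence the very meaning of $\|\theta_{nh}-\theta_0\|\to0$, must be handled with care.
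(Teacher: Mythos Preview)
Your skeleton --- basic inequality from the two $\argmin$'s, insert the Theorem~\ref{th:movh} expansion, control the empirical-process increment by a localized maximal inequality, balance against curvature of $\Psi_0(\bm v_0,\cdot)$, iterate/peel to a fixed point, then plug in $h_n=n^{-1/(r+2J)}$ --- is exactly the paper's route. But two of your ingredients differ from the paper's, and one of them is a genuine gap that prevents you from recovering the stated exponent.

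\textbf{Curvature.} The paper does \emph{not} extract curvature from the margin assumption. It simply Taylor-expands: $d(\theta_{nh},\theta_0)=\Psi_0(\theta_{nh})-\Psi_0(\theta_0)=(\theta_{nh}-\theta_0)^\top\frac{\partial^2\Psi_0}{\partial\theta^2}(\theta_{nh}-\theta_0)+o(\|\theta_{nh}-\theta_0\|^2)$, i.e.\ assumes (implicitly) a nondegenerate Hessian and obtains the \emph{quadratic} lower bound $d\gtrsim\|\theta_{nh}-\theta_0\|^2$. Your chain $\Psi_0-\Psi_0(\theta_0)\gtrsim pr(d^\theta\ne d^{\theta_0})^{(\kappa+1)/\kappa}\gtrsim\|\theta-\theta_0\|^{\alpha(\kappa)}$ breaks at the second inequality: Assumption~\ref{assump:margin} gives only an \emph{upper} bound $pr(d^\theta\ne d^{\theta_0})\lesssim\|\theta-\theta_0\|^{\kappa}$, so it bounds $pr^{(\kappa+1)/\kappa}$ from above, not below. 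Without an additional lower-margin or identifiability condition you cannot convert the Tsybakov regret inequality into a curvature in $\|\theta-\theta_0\|$. The margin assumption in this paper is used \emph{only} in the modulus step, not for curvature.

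\textbf{Modulus envelope.} The paper's $L^2$ envelope for $\tilde f=h^r\{D^*_0(\theta)-D^*_0(\theta_0)\}$ is $O_p(h^{r/2}\delta^{\kappa/(\kappa+2)})$, obtained (Lemma~\ref{lem:wrate}) by first Cauchy--Schwarz-separating $h^rK_{h,\bm v_0}$ from the disagreement indicator, then splitting $I(d^\theta\ne d^{\theta_0})=I(d^\theta\ne d^{\theta_0})\{I(|\bm S^\top\theta_0|<t)+I(|\bm S^\top\theta_0|\ge t)\}$ and optimizing over the threshold $t$; this balances the margin bound $t^{\kappa/2}$ against a Markov term $\|\bm S^\top(\theta-\theta_0)\|_2/t$ and yields the exponent $\kappa/(\kappa+2)$, not your $\kappa/2$. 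Feeding this envelope into the cadlag entropy integral $\mathcal E(\epsilon)\asymp\epsilon^{1/2}$ gives the modulus $n^{-1/2}h^{-3r/4}\delta^{\kappa/(2\kappa+4)}$; the fixed point of $\delta^2=n^{-1/2}h^{-3r/4}\delta^{\kappa/(2\kappa+4)}$ is $\delta=(n^{-1/2}h^{-3r/4})^{(2\kappa+4)/(3\kappa+8)}$, and substituting $h=n^{-1/(r+2J)}$ produces the exponent in the theorem. With your envelope exponent $\kappa/2$ (and without the quadratic curvature) the fixed point is different and the stated rate does not emerge. The VC structure of halfspaces is not invoked; the paper works entirely inside the cadlag class.
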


Characterizing the minimizer (or in general optimizer) of the estimated regimen-response curve is of interest as it forms an optimal individualized decision rule.    Under the margin assumption (i.e., Assumption \ref{assump:margin}) and using a novel iterative empirical process theory, in Theorem \ref{th:thetrate}, we derive the convergence rate of a minimizer of a nonparametrically estimated function $\Psi_{nh}(\bm{v},\theta)$.


\section{Simulation Studies}

We examine the performance of our estimator through two simulation studies. Within these studies, we compare the results of our estimator against theoretical values and random forest based estimation, demonstrating the practical benefits of using undersmoothed highly adaptive lasso based estimation when the data generating mechanism is unknown.

In both scenarios $W \sim \text{Unif}(-1, 1)$ and $V \sim \text{Unif}(-1.3, 1.3)$. Additionally $\Delta|W,V \sim \text{Bernoulli}\{\text{expit}(0.5W-0.5V)\}$ and $Y|A,W,V \sim \text{Bernoulli}\{\text{expit}(0.5W + V^2 + 2VW - 2AVW)\}.$ For each scenario we sample $n \in \{240, 480, 960, 2400\}$  observations, applying our estimator for each sample. The results displayed below give average effects  360 replicates of each scenario and sample size pair where we set $\xi(\theta,\bm{V})=1$. The latter implies that $\Psi_n$ is obtained by solving a set of simpler score equations (compared with $\xi(\theta,\bm{V})=G\{(A,\Delta^c)\mid \bm{V}\}$) which may call for more undersmoothing in order to solve $D_{CAR}^\Psi(P_n,G_n,\Psi_n)$. 

The fundamental difference between the scenarios comes in the treatment assignment mechanism. In Scenario 1, we have $A \sim \text{Bernoulli}(0.5)$, giving us a randomized trial scenario, whereas in Scenario 2, we have $A \sim \text{Bernoulli}\{\text{expit}(0.7W + 0.5V + 0.5VW)\}.$ The highly adaptive lasso estimate of $G^a(A\mid W,V)$ requires solving fewer score functions in Scenario 1 compared with Scenario 2. In fact the only score equation that to be solved in the former is $P_n\{A-G^a_n(A\mid W,V)\}=0$. Hence, higher level of undersmooting is required to solve $D_{CAR}^a(P_n,G_n^a,Q_n,\Psi_n,\theta)$.  In general, the simpler the $G\{(A,\Delta^c)\mid W,V\}$, the more undersmooting is needs to solve $D_{CAR}^a(P_n,G_n^a,Q_n,\Psi_n,\theta)$ and $D_{CAR}^c(P_n,G_n^a,Q_n,\Psi_n,\theta)$. 

In all simulations we are targeting the estimands $\Psi_{0h}({v}_0, \theta)$ and $\Psi_{0}({v}_0, \theta)$ using the estimator $\Psi_{nh}({v}_0, \theta)$ where ${v}_0 = 0.5$ and we consider the bandwidths $h \in n^{-1/3} \times\{0.5,1,1.5,2,2.5,3,4\}$. Recall that the optimal bandwidth rate is $n^{\frac{-1}{2J+r}}$ with $J>r$. In our simulation studies $r=1$ and we are setting $J=r$ leading to a rate $n^{-1/3}$ which is faster than $n^{-1/5}$ obtained by setting $J=2>r$. The choice of rate matters in cases where the target parameter is $\Psi_0$ and the goal is to show that the scaled biases remain constant and converges are nominal even under the faster rate of $n^{-1/3}$ (Figures \ref{fig:convergh} and \ref{fig:convergopth}).    Coverage results are also given for $\Psi_0({v}_0, \theta)$ where for each sample $h_n$ is chosen from the possible bandwidths as described in Section \ref{sec:bandw}.
In all cases highly adaptive lasso was implemented using the \textsf{R} package \textsf{hal9001} considering quadratic basis functions for up to 2-way interactions between all predictor variables.

\begin{figure}[ht]
    \centering
    \includegraphics[width = 1\textwidth]{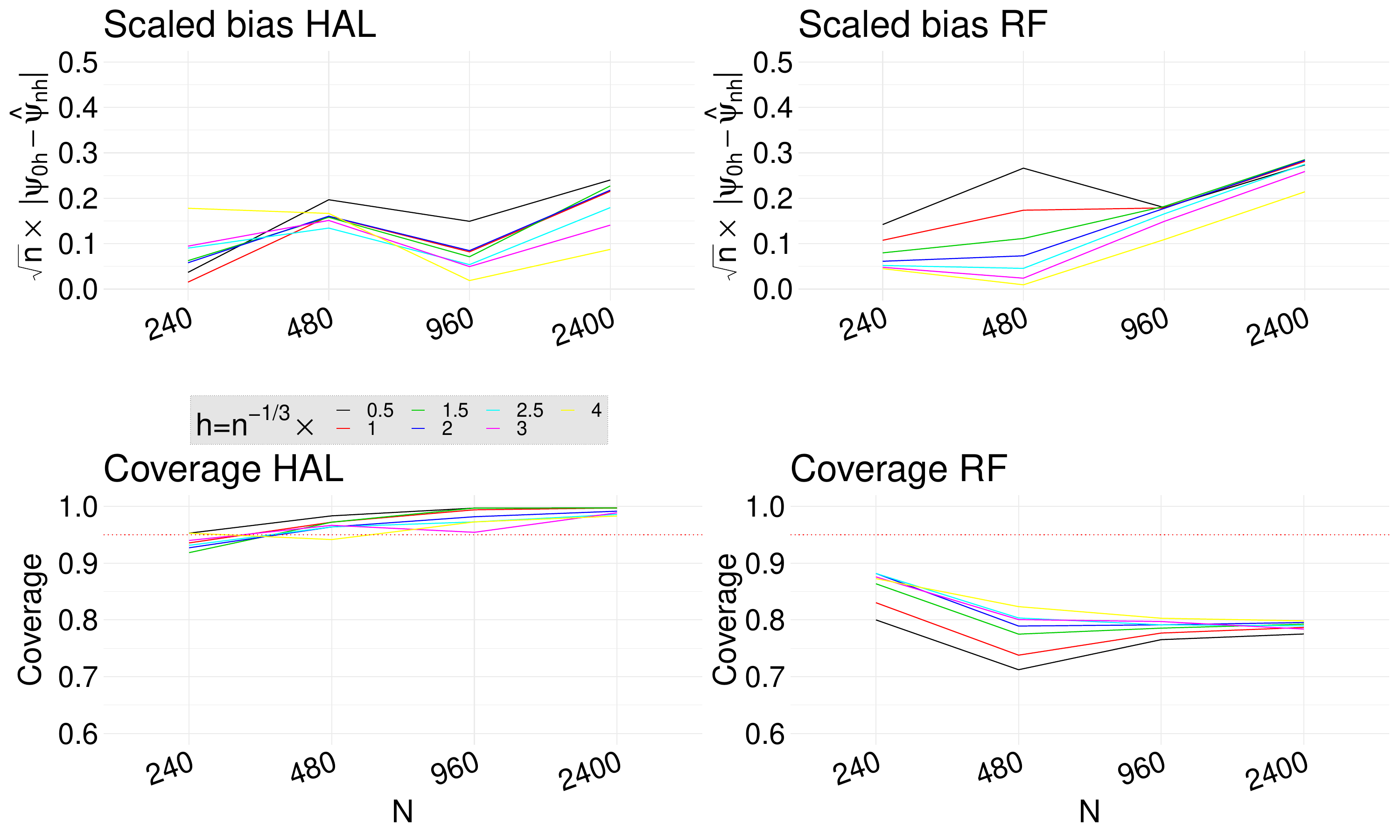}
    \caption{Simulation studies Scenario 1: The target parameter of interest is $\Psi_{0h}$. The plots show the scaled bias and coverage rate when the weight functions are estimated using an undersmoothed highly adaptive lasso (HAL) and a random forest (RF). } 
    \label{fig:fixedh}
\end{figure}

We estimate the weight functions (i.e., propensity score and censoring indicator models) using an undersmoothed highly adaptive lasso (HAL) and a random forest (RF). In both cases, $\Psi_n$ is a highly adaptive lasso estimate of $\Psi_0$ obtained based on the loss function (\ref{eq:lossf}). Figure \ref{fig:fixedh} shows the scaled bias and the coverage rate of the estimators when the target parameter is $\Psi_{0h}$. The results confirm our theoretical result presented in Theorem \ref{th:fixedh}. The scaled bias if the estimator with undersmoothed HAL is considerably smaller than the one obtained using RF. Importantly, while the coverage rate of undersmoothed HAL estimator remains close to the nominal rate of 95\%, the coverage rate of RF based estimator sharply declines as sample size increases. To assess our theoretical result of Theorem \ref{th:movh}, we plotted the the scaled bias and the coverage rate of the estimators when the target parameter is $\Psi_{0}$. Figure \ref{fig:convergh} shows that the undersmoothed HAL outperforms RF based estimators. Figures \ref{fig:supfixedh}-\ref{fig:supconvergh} in the Supplementary Material show that similar results hold in Scenario 2. Finally, Figure \ref{fig:convergopth} shows that our proposed data-adaptive bandwidth selector performs very well when combined with undersmoothed HAL estimators of weight functions. Specifically, even with smaller sample size of $n=240$, our approach results in nearly 95\%  coverage rate and as the sample size increases it quickly recovers the nominal rate. Figure \ref{fig:psinps1} displays $\Psi_{nh}(v_0=0.5,\theta)$ against $\theta$ values. As the sample size increases the bias reduces across all the bandwidth values. Figure \ref{fig:thetrate} in the Supplementary Material shows the scaled bias of $\theta_{nh}$ when the weight functions are estimated using an undersmoothed highly adaptive lasso. The plot shows that the scaled bias is relatively constant across different sample sizes thereby confirming our result in Theorem \ref{th:thetrate} (by setting $J=r=1$ and $\kappa\rightarrow \infty$).

\begin{figure}[ht]
    \centering
    \includegraphics[width = 1\textwidth]{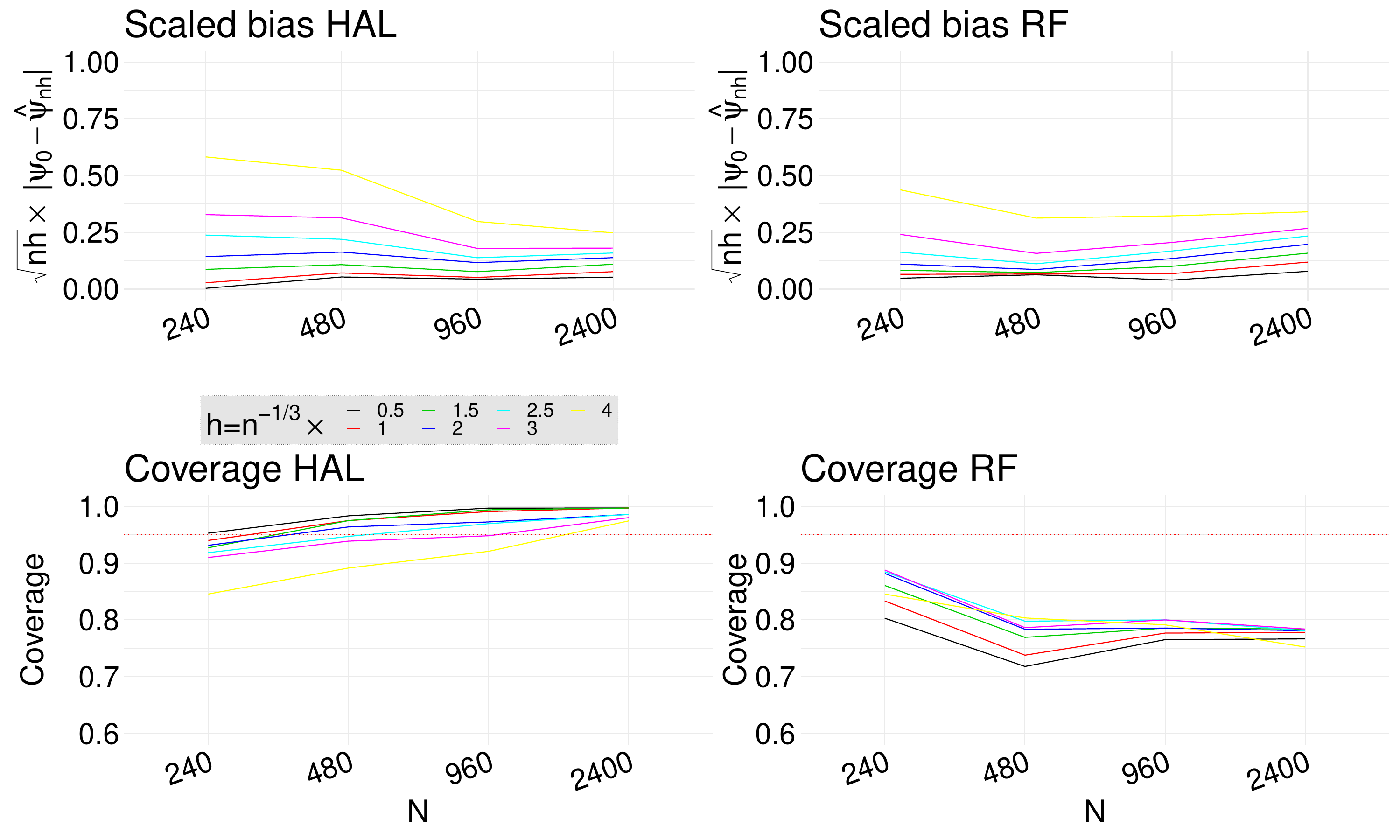}
    \caption{Simulation studies Scenario 1: The target parameter of interest is $\Psi_{0}$. The plots show the scaled bias and coverage rate when the weight functions are estimated using an undersmoothed highly adaptive lasso (HAL) and a random forest (RF).} 
    \label{fig:convergh}
\end{figure}

\begin{figure}[ht]
    \centering
    \includegraphics[width = .9\textwidth]{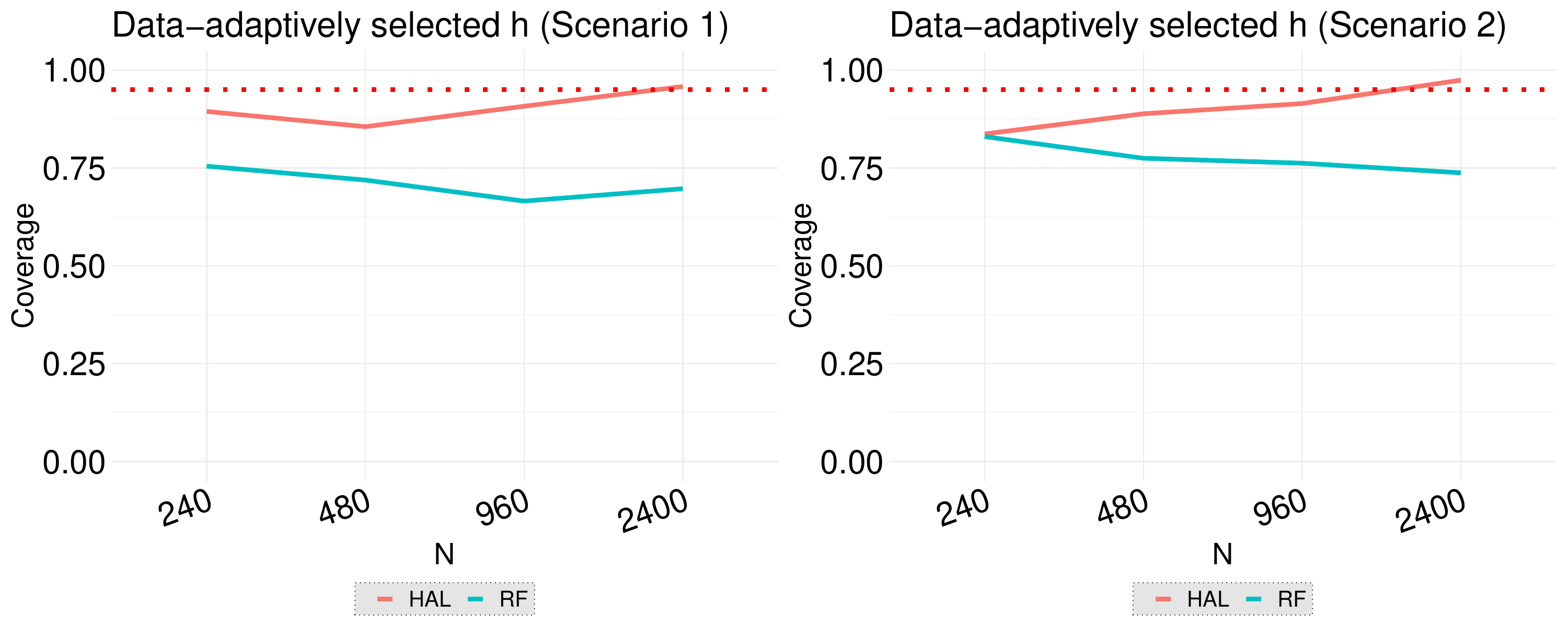}
    \caption{Simulation studies: The target parameter of interest is $\Psi_{0}$. The dotted line shows the nominal rate. The optimal bandwidth $h$ is selected using the proposed data adaptive approach.  The plots show the coverage rates in Scenarios 1 and 2 where the weight functions are estimated using an undersmoothed highly adaptive lasso (HAL) and a random forest (RF).} 
    \label{fig:convergopth}
\end{figure}

\begin{figure}[ht]
    \centering
    \includegraphics[width = 1\textwidth]{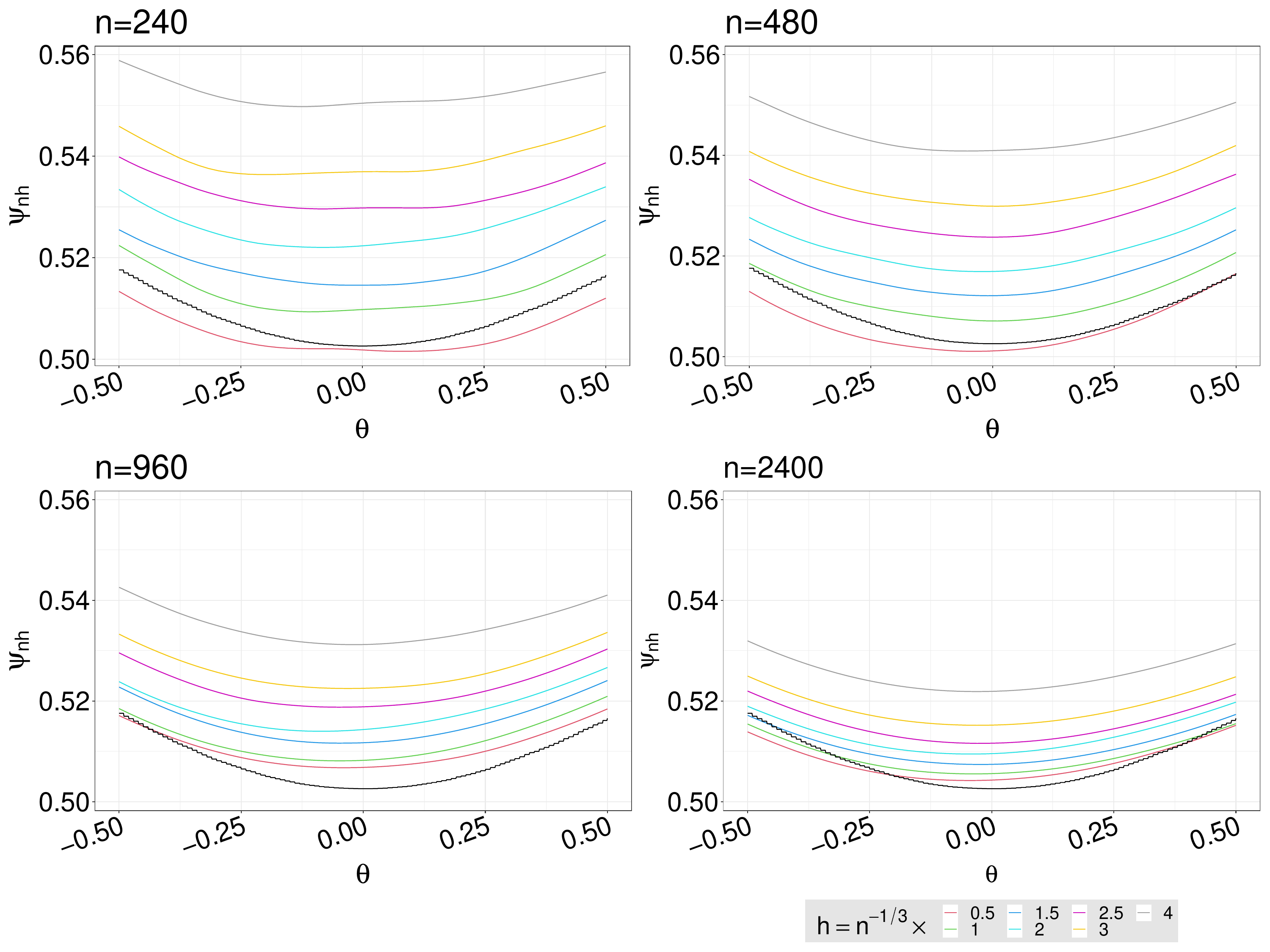}
    \caption{Simulation studies Scenario 1: The target parameter of interest is $\Psi_{0}$ (black solid line). The plots show $\Psi_{nh}$ for different bandwidths when the weight functions are estimated using an undersmoothed highly adaptive lasso and $v_0=0.5$.} 
    \label{fig:psinps1}
\end{figure}

\section{Concluding Remarks}

\subsection{Multi-stage decision making} A natural and important extension of our work is to learn dynamic treatment regimens in multi-stage problems. In settings with many decision points (e.g., mobile health), it would be beneficial to consider a stochastic class of decision rules rather than a deterministic one \citep{kennedy2019nonparametric,luckett2019estimating}.

\subsection{Score based undersmoothing criteria} The practical undersmoothing criteria proposed in Section \ref{sec:criteria} requires calculating the efficient influence function and deriving the relevant $D_{CAR}$ terms. The latter can be challenging and tedious, for example, in multi-stage settings (i.e., time-varying treatment) with many decision points. This motivates the derivation of score based criteria which requires only the score of the corresponding nuisance function. This will, in fact, resemble closely the theoretical undersmoothing conditions listed in Lemmas \ref{lem:ucondition-movh}, \ref{lem:ucondition-movh} and \ref{lem:ucondition-uconvergence} in Appendix and Supplementary materials. For example, for the treatment assignment model, we can consider
\begin{equation}\label{eq:score}
 \lambda_{n,G^a} = \argmin_{\lambda}  B^{-1} \sum_{b=1}^B \left[ \sum_{(s,j) \in
 \mathcal{J}_n} \frac{1}{ \lVert \beta_{n,\lambda,b} \rVert_{L_1}}
 \bigg\lvert P_{n,b}^1 \tilde S_{s,j}(\phi,G_{n,\lambda,b})
 \bigg\rvert \right],
\end{equation}
in which $\lVert \beta_{n,\lambda} \rVert_{L_1} = \lvert \beta_{n,\lambda,0}
\rvert + \sum_{s \subset\{1, \ldots, d\}} \sum_{j=1}^{n} \lvert
\beta_{n,\lambda,s,j} \rvert$ is the $L_1$-norm of the coefficients
$\beta_{n,\lambda,s,j}$ in the highly adaptive lasso estimator $G_{n,\lambda}$
for a given $\lambda$, and $\tilde S_{s,j}(\phi, G_{n,\lambda,b}) =
\phi_{s,j}(W) \{A_{} - G_{n,\lambda,b}(1 \mid W)\}\{G_{n,\lambda,b}
(1 \mid W)\}^{-1}$. Note that $S_{s,j}(\phi, G_{n,\lambda,b}) $ involves the score equation for the propensity score model (i.e., $\phi_{s,j}(W) \{A_{} - G_{n,\lambda,b}(1 \mid W)\}$) and the inverse of the estimate propensity score. A more detailed discussion of the score based criteria is provided in Section 4.2 of \cite{ertefaie2022nonparametric}. However, the theoretical properties of these criteria and their performance in regimen-curve estimation problems are unknown and merit further research.

\subsection{Near positively violation} In general, near positivity violation can negatively impact the performance of causal estimators. In particular, it can lead to an increased bias and variance in inverse probability weighted estimators and undersmoothing may exacerbate the issue.  One possible remedy is to consider truncation where individuals whose estimated weight functions are smaller than a given cutpoint are removed from the analyses \citep{crump2009dealing}. There are also other methods to trim the population in an interpretable way \citep{traskin2011defining, fogarty2016discrete}. Another possible way to improve the performance of our approach under near positivity violation is to fit a highly adaptive lasso by enforcing the positivity $\min (G^a_n, 1 - G^a_n) > \gamma$ and $\min (G^c_n, 1 - G^c_n) > \gamma$, for a positive constant $\gamma$. One can even make $\gamma$ data dependent (i.e., consider $\gamma_n$). This is an interesting direction from both methodological and practical point of view.

\subsection{Variable selection} In our proposed method, we assume that the set of variables included in regimen response-curve function and the decision rule (i.e., $\bm{V}$ and $\bm{S}$, respectively) are a priori specified. Particularly, in health research, variable selection in decision making is important as it may reduce the treatment costs and burden on patients. Moreover,  the quality of
the constructed rules can be severely hampered by  inclusion of many  extraneous variables \citep{jones2022valid}. Therefore, it will be interesting to systematically investigate how to perform variable selection in regimen response-curve function and the decision rule and to provide theoretical guarantees.

\begin{appendix}
\section{Theoretical undersmoothing conditions} \label{app:theoreticalconditions}

\begin{lemma}[Theoretical undersmoothing conditions when $h_n \rightarrow 0$]\label{lem:ucondition-movh}
Let $\Psi \equiv \Psi(\theta,\bm{V})$, $G^a \equiv G(P)(A \mid \bm{W})$ and $G^c \equiv G(P)(\Delta^c \mid \bm{W},A)$ denote $E(Y^{d^\theta}\mid V)$, the treatment and censoring mechanism under an arbitrary distribution $P \in \M$.
Let $\Psi_{n,\lambda_n^\Psi}$, $G^a_{n,\lambda_n^a}$ and $G^c_{n,\lambda_n^c}$ be the highly adaptive lasso estimators of $\Psi$, $G^a$ and $G^c$ using
$L_1$-norm bound $\lambda_n^\Psi$, $\lambda_n^a$ and $\lambda_n^c$, respectively. Choosing $\lambda_n^\Psi$, $\lambda_n^a$ and $\lambda_n^c$ such that
\begin{align}
        \min_{(s,j) \in \mathcal{J}_n^\Psi} {\bigg \Vert} P_n \frac{d}{d
    \Psi_{n,\lambda_n^\Psi}}
    L( \Psi_{n,\lambda_n^\Psi}) (\phi_{s,j}) {\bigg \Vert} &= o_p\left(n^{-1/2}h^{r/2}\right), \label{eq:basis3}\\
    \min_{(s,j) \in \mathcal{J}_n^a} {\bigg \Vert} P_n \frac{d}{d\logit
    G_{n,\lambda_n^a}^a}
    L(\logit G_{n,\lambda_n^a}^a) (\phi_{s,j}) {\bigg \Vert}&= o_p\left(n^{-1/2}h^{r/2}\right),  \label{eq:basis1} \\
    \min_{(s,j) \in \mathcal{J}_n^c} {\bigg \Vert} P_n \frac{d}{d\logit
    G_{n,\lambda_n^c}^c}
    L(\logit G_{n,\lambda_n^c}^c) (\phi_{s,j}) {\bigg \Vert} &= o_p\left(n^{-1/2}h^{r/2}\right), \label{eq:basis2} 
\end{align}
where, in (\ref{eq:basis3}), $L(\cdot)$ is the loss function  (\ref{eq:lossf}), and, in (\ref{eq:basis1}), $L(\cdot)$  is log-likelihood loss. Also,   $\mathcal{J}_n^.$ is a set of indices
for the basis functions with nonzero coefficients in the corresponding model. Let $D^\Psi(f_h^\Psi,\Psi_{n})
= f_h^\Psi \cdot \{I(Y>t) - \Psi_{n}\}$,  $D^a(f_h^a,G_{n}^a)
= f_h^a \cdot (A - G_{n}^a)$, and $D^c(f^c,G_{n}^c)
= f_h^c \cdot (\Delta^c - G_{n}^c)$. The functions $f_h^\Psi$, $f_h^a$ and $f_h^c$ are such that $\tilde f^\Psi = h^rf_h^\Psi$, $\tilde f^a=h^rf_h^a$  and $\tilde f^c=h^rf_h^c$ are c\`{a}dl\`{a}g with finite sectional
variation norm. Let $\tilde{f}_\phi^\Psi$, $\tilde{f}_\phi^a$ and $\tilde{f}_\phi^c$ be projections of $\tilde f^\Psi$, $\tilde f^a$  and $ \tilde f^c$ onto the linear
span of the basis functions $\phi_{s,j}$ in $L^2(P)$, where $\phi_{s,j}$
satisfies condition (\ref{eq:basis3}) and (\ref{eq:basis1}), respectively. 
Then, under the optimal bandwidth rate $h = n^{\frac{-1}{r+2J}}$ and $J>r$, we have $P_n D^\Psi({f}_h^\Psi,\Psi_{n}) =
o_p\left((nh^r)^{-1/2}\right)$,  $P_n D^a({f}_h^a,G_{n}^a) = o_p\left((nh^r)^{-1/2}\right)$ and $P_n D^c({f}_h^c,G_{n}^c) = o_p\left((nh^r)^{-1/2}\right)$.
\end{lemma}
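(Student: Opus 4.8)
The plan is to carry the bandwidth through the score-equation argument used for the fixed-bandwidth analogue (Lemma~\ref{lem:ucondition-fixedh}) and sketched in the discussion preceding the statement. I will argue for $P_nD^a({f}_h^a,G_n^a)$; the cases $P_nD^c({f}_h^c,G_n^c)$ and $P_nD^\Psi({f}_h^\Psi,\Psi_n)$ are handled identically, replacing $A-G_n^a$ by $\Delta^c-G_n^c$ (resp.\ $I(Y>t)-\Psi_n$), the log-likelihood loss by~(\ref{eq:lossf}), and the conditional-mean-zero identity $E_{P_0}(A-G_0^a\mid\bm{W})=0$ by its analogue from Assumption~\ref{assump:basic} (for $G^c$, conditioning on $(\bm{W},A)$) or, for $\Psi$, by first rewriting ${f}_h^\Psi\{I(Y>t)-\Psi_n\}$ through the score of~(\ref{eq:lossf}) so that the bounded c\`adl\`ag weight $\xi/\{\Psi_n(1-\Psi_n)\}$ (and $G_n/G_0$, when ${f}_h^\Psi$ carries $G_n$) is absorbed into the basis approximation; positivity (Assumption~\ref{assump:basic}(c)) keeps these weights and $1/G_0$ bounded. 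I condition on the cross-fitting training folds throughout, so nuisance estimates are fixed functions and empirical-process terms run over independent validation folds. The first move is to rescale: write $\tilde f^a=h^r{f}_h^a=K\{(\cdot-\bm{v}_0)/h\}{f}^a$, which by Assumption~\ref{assump:proj}(b) is c\`adl\`ag with sectional variation norm bounded uniformly in $h$ (the product kernel contributes only an $O(1)$ factor), and for which a change of variables gives $\|\tilde f^a\|_{2,P_0}=O(h^{r/2})$ since the kernel concentrates on an $O(h^r)$-neighbourhood of $\bm{v}_0$ in the $\bm{V}$-coordinates. Because $P_nD^a({f}_h^a,G_n^a)=h^{-r}P_nD^a(\tilde f^a,G_n^a)$, it suffices to show $P_nD^a(\tilde f^a,G_n^a)=o_p(h^{r/2}n^{-1/2})$, which I split as $P_nD^a(\tilde f_\phi^a,G_n^a)+P_nD^a(\tilde f^a-\tilde f_\phi^a,G_n^a)$ with $\tilde f_\phi^a$ the $L^2(P)$-projection of $\tilde f^a$ onto $\mathrm{span}\{\phi_{s,j}\}$.

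For the projection term, write $\tilde f_\phi^a=\sum_{(s,j)}c_{s,j}\phi_{s,j}$ with $\sum_{(s,j)}|c_{s,j}|=O_p(1)$ (the projection of a bounded-variation function onto the highly adaptive lasso basis has bounded sectional variation norm), set $g^\star(s,j)=c_{s,j}/\beta_{n,s,j}$ so that $D^a(\tilde f_\phi^a,G_n^a)=S_{g^\star}(G^a_{n,\lambda_n^a})$, and then modify $g^\star$ at one active index $(s^\star,j^\star)$ to the value forced by $r(g,G^a_{n,\lambda_n^a})=\sum_{(s,j)}g(s,j)|\beta_{n,s,j}|=0$, exactly as in the paragraph preceding the statement. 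Since $g$ leaves the $L_1$-norm unchanged, $P_nS_g(G^a_{n,\lambda_n^a})=0$, and telescoping gives $|P_nD^a(\tilde f_\phi^a,G_n^a)|=|P_nS_{g^\star}-P_nS_g|=O_p(|P_n\{\tfrac{d}{d\logit G^a_{n,\lambda_n^a}}L(\logit G^a_{n,\lambda_n^a})\,\phi_{s^\star,j^\star}\}|)$; choosing $(s^\star,j^\star)$ to realise the minimum in~(\ref{eq:basis1}) bounds this by $o_p(n^{-1/2}h^{r/2})$, so $h^{-r}$ times it is $o_p((nh^r)^{-1/2})$.

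For the remainder term, decompose into a drift $P_0D^a(\tilde f^a-\tilde f_\phi^a,G_n^a)$ and $(P_n-P_0)D^a(\tilde f^a-\tilde f_\phi^a,G_n^a)$. In the drift, write $A-G_n^a=(A-G_0^a)+(G_0^a-G_n^a)$: the first summand integrates to $0$ because $\tilde f^a-\tilde f_\phi^a$ is $\bm{W}$-measurable and $E_{P_0}(A-G_0^a\mid\bm{W})=0$, while the second is $\le\|\tilde f^a-\tilde f_\phi^a\|_{2,P_0}\|G_n^a-G_0^a\|_{2,P_0}=O_p(n^{-1/3})\cdot O_p(n^{-1/3}(\log n)^{c_1})=O_p(n^{-2/3}(\log n)^{c_1})$ by Cauchy--Schwarz, Assumption~\ref{assump:proj}(b), and the highly adaptive lasso rate (for $D^\Psi$ the second factor is $\|\Psi_n-\Psi_0\|_{2,P_0}=O_p(n^{-1/3}(\log n)^{2(p-1)/3})$, squaring to Theorem~\ref{th:halrate}). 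For the empirical-process part, the integrands lie, conditionally on the training folds, in a $P_0$-Donsker class of c\`adl\`ag functions of bounded sectional variation norm with bounded envelope, whose bracketing entropy is of order $\epsilon^{-1}$ up to logarithms; since their $L^2(P_0)$-size is $O_p(n^{-1/3})$, the local maximal inequality gives $(P_n-P_0)D^a(\tilde f^a-\tilde f_\phi^a,G_n^a)=O_p(n^{-1/2}\cdot n^{-1/6}(\log n)^{c_2})=O_p(n^{-2/3}(\log n)^{c_2})$. Collecting, $P_nD^a({f}_h^a,G_n^a)=o_p((nh^r)^{-1/2})+O_p(h^{-r}n^{-2/3}(\log n)^{c})$, and the second term is $o((nh^r)^{-1/2})=o(h^{-r/2}n^{-1/2})$ iff $h^{-r/2}(\log n)^{c}=o(n^{1/6})$, i.e.\ $h\gg n^{-1/(3r)}$ up to logarithms; at $h=n^{-1/(r+2J)}$ this reads $r/\{2(r+2J)\}<1/6$, which is exactly $J>r$. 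The same computation, with $\|G_n^c-G_0^c\|_{2,P_0}$ or $\|\Psi_n-\Psi_0\|_{2,P_0}$ in place of $\|G_n^a-G_0^a\|_{2,P_0}$, disposes of $D^c$ and $D^\Psi$.

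The main obstacle is precisely this rate bookkeeping around the factor $h^{-r}$ produced by un-rescaling ${f}_h^a=h^{-r}\tilde f^a$: every non-score remainder must be shown to be $o_p(h^{r/2}n^{-1/2})$, not merely $o_p(n^{-1/2})$. For the drift this forces the second-order product $n^{-2/3}$ to beat $h^{r/2}n^{-1/2}$, which is where $J>r$ at $h=n^{-1/(r+2J)}$ comes from; for the empirical-process part it forces one to use the $\epsilon^{-1}$-entropy of the c\`adl\`ag class (not merely the Donsker property) so that an $n^{-1/3}$-small function yields an $n^{-2/3}$-small fluctuation. A secondary subtlety, needed only for $D^\Psi$, is verifying that ${f}_h^\Psi\{I(Y>t)-\Psi_n\}$ is a bounded-weight multiple of a loss-(\ref{eq:lossf}) score so that the conditional-mean-zero step and Cauchy--Schwarz bound go through as for $D^a$; confirming that the projection coefficients $c_{s,j}$ of $\tilde f_\phi^a$ are bounded uniformly in $(h,n)$ is routine once $\tilde f^a$ is known to have bounded sectional variation norm.
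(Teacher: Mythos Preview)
Your argument is correct and follows essentially the same route as the paper: rescale $\tilde f^a=h^r f_h^a$, split $P_nD^a(\tilde f^a,G_n^a)$ into the projection piece $P_nD^a(\tilde f_\phi^a,G_n^a)$ (handled by the score-path construction with one coefficient modified to enforce $r(g,G_{n,\lambda_n}^a)=0$, then invoking~(\ref{eq:basis1})) and the remainder $P_nD^a(\tilde f^a-\tilde f_\phi^a,G_n^a)$ (decomposed into a $P_0$-drift controlled by Cauchy--Schwarz as $O_p(n^{-2/3})$ and an empirical-process fluctuation controlled by the c\`adl\`ag entropy bound as $O_p(n^{-1/2}n^{-1/6})$), and finally check that $h^{-r}n^{-2/3}=o_p((nh^r)^{-1/2})$ forces $J>r$ at $h=n^{-1/(r+2J)}$. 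The only cosmetic differences are that you make the conditional-mean-zero step in the drift explicit (the paper writes $P_0(\tilde f^a-\tilde f_\phi^a)(G_0^a-G_{n,\lambda_n}^a)$ directly) and you track the logarithmic factors, which the paper suppresses.
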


\begin{lemma}[Theoretical undersmoothing conditions for a fixed bandwidth $h$]\label{lem:ucondition-fixedh}
Let $\Psi \equiv \Psi(\theta,\bm{V})$, $G^a \equiv G(P)(A \mid \bm{W})$ and $G^c \equiv G(P)(\Delta^c \mid \bm{W},A)$ denote $E(Y^{d^\theta}\mid V)$, the treatment and censoring mechanism under an arbitrary distribution $P \in \M$.
Let $\Psi_{n,\lambda_n^\Psi}$, $G^a_{n,\lambda_n^a}$ and $G^c_{n,\lambda_n^c}$ be the highly adaptive lasso estimators of $\Psi$, $G^a$ and $G^c$ using
$L_1$-norm bound $\lambda_n^\Psi$, $\lambda_n^a$ and $\lambda_n^c$, respectively. Choosing $\lambda_n^\Psi$, $\lambda_n^a$ and $\lambda_n^c$ such that
\begin{align}
        \min_{(s,j) \in \mathcal{J}_n^\Psi} {\bigg \Vert} P_n \frac{d}{d
    \Psi_{n,\lambda_n^\Psi}}
    L( \Psi_{n,\lambda_n^\Psi}) (\phi_{s,j}) {\bigg \Vert} &= o_p\left(n^{-1/2}\right), \label{eq:basis3-fixedh}\\
    \min_{(s,j) \in \mathcal{J}_n^a} {\bigg \Vert} P_n \frac{d}{d\logit
    G_{n,\lambda_n^a}^a}
    L(\logit G_{n,\lambda_n^a}^a) (\phi_{s,j}) {\bigg \Vert}&= o_p\left(n^{-1/2}\right),  \label{eq:basis1-fixedh} \\
    \min_{(s,j) \in \mathcal{J}_n^c} {\bigg \Vert} P_n \frac{d}{d\logit
    G_{n,\lambda_n^c}^c}
    L(\logit G_{n,\lambda_n^c}^c) (\phi_{s,j}) {\bigg \Vert} &= o_p\left(n^{-1/2}\right), \label{eq:basis2-fixedh} 
\end{align}
where, in (\ref{eq:basis3-fixedh}), $L(\cdot)$ is the loss function  (\ref{eq:lossf}), and, in (\ref{eq:basis1-fixedh}) and (\ref{eq:basis2-fixedh}), $L(\cdot)$  is log-likelihood loss. Also,   $\mathcal{J}_n^.$ is a set of indices
for the basis functions with nonzero coefficients in the corresponding model. Let $D^\Psi(f^\Psi,\Psi_{n})
= f^\Psi \cdot \{I(Y>t) - \Psi_{n}\}$,  $D^a(f^a,G_{n}^a)
= f^a \cdot (A - G_{n}^a)$, and $D^c(f^c,G_{n}^c)
= f^c \cdot (\Delta^c - G_{n}^c)$. The functions $f^\Psi$, $f^a$ and $f^c$ are c\`{a}dl\`{a}g with finite sectional
variation norm. Let $\tilde{f}_\phi^\Psi$, $\tilde{f}_\phi^a$ and $\tilde{f}_\phi^c$ be projections of $\tilde f^\Psi$, $\tilde f^a$  and $ \tilde f^c$ onto the linear
span of the basis functions $\phi_{s,j}$ in $L^2(P)$, where $\phi_{s,j}$
satisfies condition (\ref{eq:basis3-fixedh}) and (\ref{eq:basis1-fixedh}), respectively. 
Then,  we have $P_n D^\Psi({f}^\Psi,\Psi_{n}) =
o_p\left(n^{-1/2}\right)$,  $P_n D^a({f}^a,G_{n}^a) = o_p\left(n^{-1/2}\right)$ and $P_n D^c({f}^c,G_{n}^c) = o_p\left(n^{-1/2}\right)$.
\end{lemma}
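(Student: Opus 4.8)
The plan is to run, in parallel for the three undersmoothed highly adaptive lasso fits $G_n^a$, $G_n^c$ and $\Psi_n$, the fixed-bandwidth version of the argument behind Lemma~1 of \cite{ertefaie2022nonparametric}, which is exactly the heuristic made precise in Section~\ref{sec:est}. I will carry out the steps for $G_n^a$; those for $G_n^c$ and $\Psi_n$ follow the same template with the treatment log-likelihood score replaced by the conditional-censoring log-likelihood score and by the score of the loss \eqref{eq:lossf}, and with \eqref{eq:basis2-fixedh} and \eqref{eq:basis3-fixedh} in place of \eqref{eq:basis1-fixedh}. First I would record the only two facts about $G_n^a$ that are used: (i) as a minimizer of the empirical log-likelihood risk over the $L_1$-ball of radius $\lambda_n^a$, it solves $P_n S_g(G_n^a)=0$ for every bounded direction $g$ with $r(g,G_n^a)\equiv\sum_{(s,j)}g(s,j)\,|\beta_{n,s,j}|=0$, where $S_g$ is the coefficient-perturbation score of Section~\ref{sec:est}; and (ii) the undersmoothing condition \eqref{eq:basis1-fixedh}. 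Next, using Assumption~\ref{assump:cadlag} and the representation of \cite{gill1993inefficient}, I would approximate the weight $f^a=(Q_0-\Psi_0)/G_0$ by a finite linear combination $\tilde f^a_\phi=\sum_{(s,j)}c_{s,j}\phi_{s,j}$ of indicator basis functions active in $G_n^a$, with $\sum_{(s,j)}|c_{s,j}|=O_p(1)$ and, by Assumption~\ref{assump:proj}(a), $\|f^a-\tilde f^a_\phi\|_{2,\mu}=O_p(n^{-1/4})$; then $D^a(\tilde f^a_\phi,G_n^a)=\tilde f^a_\phi(A-G_n^a)=S_{g^{\star}}(G_n^a)$ with $g^{\star}(s,j)=c_{s,j}/\beta_{n,s,j}$, but this $g^{\star}$ need not satisfy $r(g^{\star},G_n^a)=0$.

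The key step is to repair the $L_1$-norm restriction. I would pick $(s^{\star},j^{\star})$ attaining the minimum in \eqref{eq:basis1-fixedh} and replace $g^{\star}$ by the direction $g$ that agrees with $g^{\star}$ off $(s^{\star},j^{\star})$ and takes at $(s^{\star},j^{\star})$ the unique value making $r(g,G_n^a)=0$. Since only one coefficient is altered and the factor $\beta_{n,s^{\star},j^{\star}}$ cancels,
\begin{equation*}
P_n S_g(G_n^a)-P_n D^a(\tilde f^a_\phi,G_n^a)=O_p\!\left(r(g^{\star},G_n^a)\;P_n\!\left\{\frac{d}{d\logit G_n^a}L(\logit G_n^a)\,\phi_{s^{\star},j^{\star}}\right\}\right),
\end{equation*}
with $|r(g^{\star},G_n^a)|=\big|\sum_{(s,j)}c_{s,j}\,\mathrm{sgn}(\beta_{n,s,j})\big|\le\sum_{(s,j)}|c_{s,j}|=O_p(1)$ and the bracketed empirical score $o_p(n^{-1/2})$ by \eqref{eq:basis1-fixedh}; since $P_n S_g(G_n^a)=0$, this yields $P_n D^a(\tilde f^a_\phi,G_n^a)=o_p(n^{-1/2})$.

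To pass from $\tilde f^a_\phi$ to $f^a$ I would decompose
\begin{align*}
P_n D^a(f^a,G_n^a)=P_n D^a(\tilde f^a_\phi,G_n^a)&+(P_n-P_0)\{(f^a-\tilde f^a_\phi)(A-G_n^a)\}\\
&+P_0\{(f^a-\tilde f^a_\phi)(A-G_n^a)\},
\end{align*}
and handle the three pieces: the first is $o_p(n^{-1/2})$ by the previous step; the second is an empirical process at a function lying in a fixed $P_0$-Donsker class (products of uniformly bounded c\`{a}dl\`{a}g functions of bounded sectional variation norm, $1/G_n^a$ bounded by Assumption~\ref{assump:basic}(c)) with $L_2(P_0)$-norm $O_p(n^{-1/4})$, hence $o_p(n^{-1/2})$ by asymptotic equicontinuity; and the third, on conditioning on $\bm{W}$, equals $P_0\{(f^a-\tilde f^a_\phi)(G_0^a-G_n^a)\}$, which Cauchy--Schwarz bounds by $\|f^a-\tilde f^a_\phi\|_{2,P_0}\,\|G_0^a-G_n^a\|_{2,P_0}=O_p(n^{-1/4})\,o_p(n^{-1/4})=o_p(n^{-1/2})$ using the $n^{-1/3}$-type rate of the highly adaptive lasso. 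Thus $P_n D^a(f^a,G_n^a)=o_p(n^{-1/2})$. Repeating the whole argument with the censoring score gives $P_n D^c(f^c,G_n^c)=o_p(n^{-1/2})$, and with the score of \eqref{eq:lossf} — after first substituting $G_n$ for $G_0$ in $f^\Psi$ at a cost of $o_p(n^{-1/2})$ (Cauchy--Schwarz and the lasso rate of Theorem~\ref{th:halrate}) — gives $P_n D^\Psi(f^\Psi,\Psi_n)=o_p(n^{-1/2})$.

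The hard part is the $L_1$-repair step: the highly adaptive lasso solves only those linear combinations of the maximum-likelihood scores $P_n\{(A-G_n^a)\phi_{s,j}\}$ that are orthogonal to the coefficient-magnitude vector $|\beta_n|$, so one must show that re-imposing this orthogonality disturbs $D^a(\tilde f^a_\phi,G_n^a)$ by at most a single extra score term and that the prefactor $r(g^{\star},G_n^a)$ stays $O_p(1)$ — equivalently, that the undersmoothed $L_1$ radius $\lambda_n^a$ does not grow too quickly. It is precisely the ``minimal active score is $o_p(n^{-1/2})$'' form of \eqref{eq:basis3-fixedh}--\eqref{eq:basis2-fixedh} that then kills the extra term; the heaviest bookkeeping is the $\Psi_n$ case, where the score of \eqref{eq:lossf} additionally carries an integral over $\theta$ and the nuisance factor $1/G_n$.
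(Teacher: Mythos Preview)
Your proposal is correct and follows essentially the same route as the paper: the paper's own proof is the one-line reduction ``follows from the proof of Lemma~\ref{lem:ucondition-movh} with the bandwidth $h$ replaced by a constant,'' and you have spelled out exactly that specialization --- the decomposition $P_n D^a(f^a,G_n^a)=P_n D^a(\tilde f^a_\phi,G_n^a)+(P_n-P_0)[\cdots]+P_0[\cdots]$, the $L_1$-repair at the single index attaining the minimum in \eqref{eq:basis1-fixedh}, and the Cauchy--Schwarz/HAL-rate control of the drift term. Two small notes: your prefactor $r(g^{\star},G_n^a)$ is in fact exactly $|\kappa(s^{\star},j^{\star})|$ in the paper's notation (since $\kappa=-\operatorname{sgn}(\beta_{n,s^{\star},j^{\star}})\,r(g^{\star},G_n^a)$), so that step is not merely order-correct but identical; and you rightly invoke the $O_p(n^{-1/4})$ projection rate of Assumption~\ref{assump:proj}(a), which is the appropriate one here --- the paper's Lemma~\ref{lem:ucondition-movh} proof uses $O_p(n^{-1/3})$ only to absorb the extra $h^{-r}$, which is absent in the fixed-$h$ case.
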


\begin{lemma}[Theoretical undersmoothing conditions for uniform convergence with fixed bandwidth $h$]\label{lem:ucondition-uconvergence}
Let $\Psi \equiv \Psi(\theta,\bm{V})$, $G^a \equiv G(P)(A \mid \bm{W})$ and $G^c \equiv G(P)(\Delta^c \mid \bm{W},A)$ denote $E(Y^{d^\theta}\mid V)$, the treatment and censoring mechanism under an arbitrary distribution $P \in \M$.
Let $\Psi_{n,\lambda_n^\Psi}$, $G^a_{n,\lambda_n^a}$ and $G^c_{n,\lambda_n^c}$ be the highly adaptive lasso estimators of $\Psi$, $G^a$ and $G^c$ using
$L_1$-norm bound $\lambda_n^m$, $\lambda_n^a$ and $\lambda_n^c$, respectively. Choosing $\lambda_n^\Psi$, $\lambda_n^a$ and $\lambda_n^c$ such that
\begin{align}
        \min_{(s,j) \in \mathcal{J}_n^\Psi} \sup_{v \in {\mathcal{V}}} {\bigg \Vert} P_n \frac{d}{d
    \Psi_{n,\lambda_n^\Psi}}
    L( \Psi_{n,\lambda_n^\Psi}) (\phi_{s,j}) {\bigg \Vert} &= o_p(n^{-1/2}), \label{eq:basis3u}\\
    \min_{(s,j) \in \mathcal{J}_n^a} \sup_{v \in {\mathcal{V}}}{\bigg \Vert} P_n \frac{d}{d\logit
    G_{n,\lambda_n^a}^a}
    L(\logit G_{n,\lambda_n^a}^a) (\phi_{s,j}) {\bigg \Vert}&= o_p(n^{-1/2}),  \label{eq:basis1u} \\
    \min_{(s,j) \in \mathcal{J}_n^c} \sup_{v \in {\mathcal{V}}}{\bigg \Vert} P_n \frac{d}{d\logit
    G_{n,\lambda_n^c}^c}
    L(\logit G_{n,\lambda_n^c}^c) (\phi_{s,j}) {\bigg \Vert} &= o_p(n^{-1/2}), \label{eq:basis2u} 
\end{align}
where, in (\ref{eq:basis3u}), $L(\cdot)$ is the loss function  (\ref{eq:lossf}), and, in (\ref{eq:basis1u}) and (\ref{eq:basis2u}), $L(\cdot)$  is log-likelihood loss. Also,   $\mathcal{J}_n^.$ is a set of indices
for the basis functions with nonzero coefficients in the corresponding model. Let $D^\Psi(f^\Psi,\Psi_{n})
= f^\Psi \cdot \{I(Y>t) - \Psi_{n}\}$,  $D^a(f^a,G_{n}^a)
= f^a \cdot (A - G_{n}^a)$, and $D^c(f^c,G_{n}^c)
= f^c \cdot (\Delta^c - G_{n}^c)$. Here, $f^\Psi$, $f^a$  and $f^c$ are c\`{a}dl\`{a}g with finite sectional
variation norm, and we let $\tilde{f}^\Psi$, $\tilde{f}^a$ and $\tilde{f}^c$ be projections of $f^\Psi$, $f^a$  and $f^c$ onto the linear
span of the basis functions $\phi_{s,j}$ in $L^2(P)$, where $\phi_{s,j}$
satisfies condition (\ref{eq:basis3u}) and (\ref{eq:basis1u}), respectively. Assuming $\lVert f^\Psi - \tilde{f}^\Psi
\rVert_{2,P_0} = O_p(n^{-1/4})$, $\lVert f^a - \tilde{f}^a
\rVert_{2,P_0} = O_p(n^{-1/4})$ and $\lVert f^c - \tilde{f}^c
\rVert_{2,P_0} = O_p(n^{-1/4})$, it follows that $P_n D^\Psi(\tilde{f}^\Psi,\Psi_{n}) =
o_p(n^{-1/2})$, $P_n D^a(\tilde{f}^a,G_{n}^a) =
o_p(n^{-1/2})$, $P_n D^c(\tilde{f}^c,G_{n}^c) =
o_p(n^{-1/2})$ $P_n D^\Psi({f}^\Psi,\Psi_{n}) =
o_p(n^{-1/2})$,  $P_n D^a({f}^a,G_{n}^a) = o_p(n^{-1/2})$ and $P_n D^c({f}^c,G_{n}^c) = o_p(n^{-1/2})$.
\end{lemma}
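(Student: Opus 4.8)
The plan is to run, uniformly in $\bm v\in\mathcal V$, the single-coordinate score-correction argument that proves Lemma~\ref{lem:ucondition-fixedh} and is displayed in Section~\ref{sec:challenge}; this lemma is what feeds the uniform remainder bounds behind the Gaussian process limit of Theorem~\ref{th:fixedh}. The one genuinely new ingredient over the pointwise case is that, because $h$ is held fixed, the family $\{K_{h,\bm v}:\bm v\in\mathcal V\}$ lies in a fixed class of c\`{a}dl\`{a}g functions with uniformly bounded sectional variation norm, so that every product or ratio of $K_{h,\bm v}$ with the nuisance functions entering $D^\Psi$, $D^a$, $D^c$ — each c\`{a}dl\`{a}g with finite sectional variation norm by Assumption~\ref{assump:cadlag} and bounded away from zero by Assumption~\ref{assump:basic}(c) — stays in a single Donsker class, with $L^2(P_0)$ norms controlled uniformly in $\bm v$. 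Granting this, I would first establish $\sup_{\bm v}\lvert P_n D^a(\tilde f^a,G_n^a)\rvert=o_p(n^{-1/2})$: writing $\tilde f^a$ as the $L^2$-projection of $f^a$ onto the span of the basis functions $\phi_{s,j}$ with nonzero highly adaptive lasso coefficients, we have $D^a(\tilde f^a,G_n^a)=S_{g^{\star}}(G_n^a)$ with $g^{\star}(s,j)$ the $\phi_{s,j}$-coefficient of $\tilde f^a$ divided by $\beta_{n,s,j}$; this $g^{\star}$ in general violates $r(g^{\star},G_n^a)=0$, so I would replace it by $g$ that agrees with $g^{\star}$ off one index $(s^{\star},j^{\star})$ and is forced by (\ref{eq:restr}) to restore $r(g,G_n^a)=0$. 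Since the highly adaptive lasso minimizer solves every $L_1$-norm preserving score equation, $P_n S_g(G_n^a)=0$, and $P_n S_g(G_n^a)-P_n D^a(\tilde f^a,G_n^a)$ collapses to a single term of the form $O_p(1)\cdot P_n\{\frac{d}{d\logit G^a_{n,\lambda_n^a}}L(\logit G^a_{n,\lambda_n^a})\,\phi_{s^{\star},j^{\star}}\}$, which by the uniform undersmoothing condition (\ref{eq:basis1u}) is $o_p(n^{-1/2})$ uniformly in $\bm v$. The $G^c$ and $\Psi$ versions are identical, using (\ref{eq:basis2u}) and (\ref{eq:basis3u}) respectively and, for $\Psi$, the loss (\ref{eq:lossf}) in place of the log-likelihood loss.

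Next I would pass from $\tilde f$ to $f$ by decomposing $P_n D^a(f^a,G_n^a)$ into $P_n D^a(\tilde f^a,G_n^a)$ (handled above), the empirical-process term $(P_n-P_0)\{(f^a-\tilde f^a)(A-G_n^a)\}$, and the remainder $P_0\{(f^a-\tilde f^a)(A-G_n^a)\}$. The empirical-process term is $o_p(n^{-1/2})$ uniformly in $\bm v$ by asymptotic equicontinuity: $(f^a-\tilde f^a)(A-G_n^a)$ lies in the fixed Donsker class above with probability tending to one (using that, under Assumption~\ref{assump:cadlag}, the projection $\tilde f^a$ has uniformly bounded sectional variation norm, as in the discussion following Assumption~\ref{assump:proj}), and its $L^2(P_0)$ norm is $O_p(n^{-1/4})=o_p(1)$ by the projection hypothesis $\lVert f^a-\tilde f^a\rVert_{2,P_0}=O_p(n^{-1/4})$ and boundedness of $A-G_n^a$. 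For the remainder, the conditional mean-zero property of the treatment residual given $\bm W$ reduces it to $P_0\{(f^a-\tilde f^a)(G_0^a-G_n^a)\}$, which by Cauchy--Schwarz is at most $\lVert f^a-\tilde f^a\rVert_{2,P_0}\,\lVert G_0^a-G_n^a\rVert_{2,P_0}=O_p(n^{-1/4})$ times the highly adaptive lasso rate (of order $n^{-1/3}$ up to logarithmic factors, cf.\ Theorem~\ref{th:halrate} and the rate quoted after Assumption~\ref{assump:cadlag}), hence $o_p(n^{-1/2})$. The same three-term decomposition, with the analogous conditional mean-zero and Cauchy--Schwarz steps and with $d_0(\Psi_n,\Psi_0)$ from Theorem~\ref{th:halrate} controlling $\lVert\Psi_n-\Psi_0\rVert_{2,P_0}$, delivers the stated bounds for $D^c(f^c,G_n^c)$ and $D^\Psi(f^\Psi,\Psi_n)$; the $\tilde f^c,\tilde f^\Psi$ statements are the first terms of these decompositions.

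I expect the main obstacle to be carrying the uniformity in $\bm v$ through every step simultaneously. Two points need care: (i) the correction coefficient $\{g(s^{\star},j^{\star})-g^{\star}(s^{\star},j^{\star})\}\beta_{n,s^{\star},j^{\star}}$ must be $O_p(1)$ uniformly in $\bm v$, which is where the fixed bandwidth and the uniform variation-norm bound coming from Assumption~\ref{assump:cadlag} are essential (they keep the projection coefficients, and hence $g^{\star}$, uniformly bounded); and (ii) the equicontinuity and Cauchy--Schwarz bounds are applied to classes indexed jointly by $\bm v$ and by the data-dependent estimators $G_n^a$, $G_n^c$, $\Psi_n$, which I would control through a single uniform-entropy (or bracketing) bound for the fixed c\`{a}dl\`{a}g envelope class into which all these objects fall with probability tending to one, so that the maximal inequalities apply simultaneously over $\bm v$. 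The remaining bookkeeping — verifying that the single correction term is indeed the only surviving difference, and that modifying $g$ only at $(s^{\star},j^{\star})$ does not disturb $r(g,G_n^a)$ elsewhere — is routine and parallels the display in Section~\ref{sec:challenge} and the proof of Lemma~\ref{lem:ucondition-fixedh}.
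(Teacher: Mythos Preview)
Your proposal is correct and follows essentially the same route as the paper's proof: the same three-term decomposition $P_nD^a(f^a,G_n^a)=P_nD^a(\tilde f^a,G_n^a)+(P_n-P_0)D^a(f^a-\tilde f^a,G_n^a)+P_0D^a(f^a-\tilde f^a,G_n^a)$, with the first piece handled by the single-coordinate score-correction argument (your $g$ versus $g^\star$ construction matches the paper's exactly, and the uniform $O_p(1)$ bound on $\kappa(s^\star,j^\star)$ is precisely the point you flag in obstacle~(i)), the second by an empirical-process maximal inequality over the c\`adl\`ag class, and the third by Cauchy--Schwarz against the HAL rate for $G_n^a$. The only cosmetic difference is that the paper bounds the empirical-process piece via the explicit entropy-integral inequality (Lemma~\ref{lem:vw} together with Lemma~\ref{lem:laan}), obtaining $O_p(n^{-1/2}n^{-1/6})$, whereas you phrase it as asymptotic equicontinuity; both lead to $o_p(n^{-1/2})$ uniformly in $\bm v$.
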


\section{Sectional variation norm} \label{sec:secnorm}
Suppose that the function of interest falls into a c\`{a}dl\`{a}g class of functions on a p-dimensional cube $[0,\tau] \subset \mathcal{R}^p$ with finite sectional variation norm where $\tau$ is the upper bound of all supports which is assumed to be finite. The p-dimensional cube $[0,\tau]$ can be represented as a union of lower dimensional cubes (i.e., $l$-dimensional with $l\leq p$) plus the origin. That is  $[0,\tau] = \{\cup_s(0,\tau_s]\} \cup \{0\}$ where $\cup_s$ is over all subsets $s$ of $\{1,2,\cdots,p\}$. 

Denote $\D[0,\tau]$ as the Banach space of $p$-variate real-valued c\`{a}dl\`{a}g
functions on a cube $[0,\tau] \in \R^p$. For each function $f \in \D[0,\tau]$, we define the $s-$th section $f$ as $f_s(u) = f\{u_1 I(1 \in s),\cdots, u_d \in I(p \in s)\}$.  By definition, $f_s(u)$  varies along the variables in $u_s$ according to $f$ while setting other variables to zero. We define the sectional
variation norm of a given $f$  as
\begin{equation*}
  \lVert f \rVert^{*}_\zeta = \lvert f(0) \rvert +\sum_{s
  \subset\{1,\ldots,p\}} \int_{0_s}^{\tau_s} \lvert df_s(u) \rvert,
\end{equation*}
where the sum is over all subsets of the coordinates $\{0,1,\ldots,p\}$. The term $ \int_{0_s}^{\tau_s} \lvert df_s(u) \rvert$ denotes the $s-$specific variation norm. To clarify the definition, let's consider a simple case of $p=1$. In this case, the variation norm of function $f$ over $[0,\tau]$ is defined as
\[
\lVert f \rVert^{*}_\zeta = \sup_{Q \in \mathcal{Q}} \sum_{i=1}^{n_q-1}  \lvert f(c_{i+1}) - f(c_{i}) \rvert
\]
where $c_0=0$, $Q$ is a partition of $[0,\tau]$ such as $(0,c_1],(c_1,c_2],\cdots,(c_{n-1},c_{n_q}=\tau]$ and $\mathcal{Q}$ is the class of all possible partitions. For differentiable functions $f$, the sectional variation norm is $\lVert f \rVert^{*}_\zeta = \int_{0}^{\tau} \lvert f'(u)\rvert du $. 

For $p=2$, the variation norm of function $f$ over $[0,\tau]\subset \mathcal{R}^2$ is defined as
\[
\lVert f \rVert^{*}_\zeta = \sup_{Q \in \mathcal{Q}} \sum_{i=0}^{n_{q_1}-1}\sum_{j=0}^{n_{q_2}-1}  \lvert f(u_{i+1},v_{j+1}) - f(u_{i},v_{j+1})- f(u_{i+1},v_{j})+f(u_{i},v_{j})\rvert
\]
where $u_0=v_0=0$, $Q$ is a partition of $[0,\tau]$ such as $(0,u_1],(u_1,u_2],\cdots,(u_{n_{q_2}-1},u_{n_{q_1}}=\tau]$, $(0,v_1],(v_1,v_2],\cdots,(v_{n_{q_2}-1},v_{n_{q_2}}=\tau]$ and $\mathcal{Q}$ is the class of all possible partitions.


In practice, we choose the support points $u_{s,i}$ to be the observed values of $\bm{W}$ denoted as $\tilde w_{s,1},\cdots,\tilde w_{s,n}$. Hence the indicator basis functions will be  $\phi_{s,i} (w_s) = I(w_s \geq \tilde w_{s,i})$. For simplicity lets consider $p=1$, then, in a hypothetical example, the design matrix based on indicator basis functions would be

\begin{table}[h]
\centering
\begin{tabular}{lrrrrr}
$W$ & $I(W\geq 1.45)$ & $I(W\geq 0.84)$ & $I(W\geq 1.0)$ & $I(W\geq 0.16)$ & $\cdots$  \\
\hline
1.45 & 1 & 1 & 0 &1 &$\cdots$\\
0.84 & 0 & 1 & 0&1&$\cdots$ \\
1.0 & 1 & 1 & 1  &1&$\cdots$\\
0.16 & 0 & 0 & 0& 1&$\cdots$\\
$\cdots$ & $\cdots$ & $\cdots$ & $\cdots$ &$\cdots$\\
\end{tabular}
\vspace{0em}
\end{table}



\end{appendix}


\begin{funding}
The first author was supported by  National Institute on Drug Abuse under award number R01DA048764, and National Institute of Neurological Disorders and Stroke under award numbers R33NS120240 and R61NS12024. The third author was supported in part by 
NIH Grant R01AI074345.
\end{funding}

\begin{supplement}
\stitle{Proofs of the theoretical results \ref{sec:proofs1}-\ref{sec:ratethetaconvh}}
\sdescription{Supplements \ref{sec:proofs1}-\ref{sec:ratethetaconvh} contain proofs of our theoretical results. }
\end{supplement}
\begin{supplement}
\stitle{Additional figures}
\sdescription{ Supplement \ref{sec:addfig} includes additional figures for the simulation studies.}
\end{supplement}




\bibliographystyle{imsart-nameyear}
\bibliography{chalipwbib}

\pagebreak

\setcounter{section}{0}
\doublespacing
\renewcommand*{\thesection}{S\arabic{section}}
\renewcommand*{\thesubsection}{S\arabic{section}.\arabic{subsection}}
\renewcommand{\theequation}{S\arabic{equation}}
\renewcommand{\thefigure}{S\arabic{figure}}
\renewcommand{\thetable}{S\arabic{table}}
\renewcommand{\thetheorem}{S\arabic{theorem}}
\renewcommand{\thelemma}{S\arabic{lemma}}
\renewcommand{\bibnumfmt}[1]{[S#1]}
\renewcommand{\citenumfont}[1]{S#1}
\setcounter{page}{1}


\begin{center}
  \LARGE{
    \textbf{Supplement to ``Nonparametric estimation of a covariate-adjusted counterfactual treatment regimen response curve''}
  }
\end{center}

\section{Proof of Lemmas \ref{lem:ucondition-movh}, \ref{lem:ucondition-fixedh} and \ref{lem:ucondition-uconvergence} } \label{sec:proofs1}
\begin{proof}[Proof of Lemma~\ref{lem:ucondition-movh}]
We first proof the result for the treatment indicator and $G^a$. Let $\tilde f^a = h^rf_h^a $ and  $D^a\{\tilde f^a,G^a_{n,\lambda_n}\} = \tilde f^a \cdot (A - G^a_{n,\lambda_n})$, where $\tilde f^a$ is
a c\`{a}dl\`{a}g function with a finite sectional variation norm, and let
$\tilde{f}_\phi^a$ be an approximation of $\tilde f^a$ using the basis functions $\phi$ that satisfy condition (\ref{eq:basis1}).

\begin{align*}
    P_n D^a(f_h^a, G^a_{n,\lambda_n}) &= h^{-r}P_n D^a(\tilde f^a, G^a_{n,\lambda_n}) \\
                     & = h^{-r}P_n D^a(\tilde f^a - \tilde{f}_\phi^a, G^a_{n,\lambda_n}) +h^{-r}P_n D^a( \tilde{f}_\phi^a, G^a_{n,\lambda_n}) \\
                     & = h^{-r}(P_n-P_0) D^a(\tilde f^a - \tilde{f}_\phi^a, G^a_{n,\lambda_n}) +h^{-r}P_0 D^a(\tilde f^a - \tilde{f}_\phi^a, G^a_{n,\lambda_n})+h^{-r}P_n D^a( \tilde{f}_\phi^a, G^a_{n,\lambda_n}) \\
                     & = h^{-r} O_p(n^{-1/2} n^{-1/6}) + h^{-r} O_p(n^{-1/3}n^{-1/3})+h^{-r}P_n D^a( \tilde{f}_\phi^a, G^a_{n,\lambda_n}).
\end{align*}
The last equality follows from $\|\tilde f^a - \tilde{f}_\phi^a\|_2 = O_p(n^{-1/3})$ and Lemma \ref{lem:vw} as
\begin{align*}
(P_n-P_0) D^a(\tilde f^a - \tilde{f}_\phi^a, G^a_{n,\lambda_n}) &\leq   \sup_{ \|\tilde f^a - \tilde{f}_\phi^a\|_2 \leq n^{-1/3}} | (P_n-P_0)D^a(\tilde f^a - \tilde{f}_\phi^a, G^a_{n,\lambda_n}) | \nonumber \\
                                 & =O_p\{  n^{-1/2} \mathcal{E}(n^{-1/3},L^2(P))\} \leq O_p(n^{-1/2}n^{-1/6}).
 \end{align*}
Moreover, by the convergence rate of the
highly adaptive lasso estimate (i.e., $\lVert G^a_{0} - G^a_{n,\lambda_n}
\rVert_{2,P_0} = O_p(n^{-1/3})$), 
\[
P_0 D^a(\tilde f^a - \tilde{f}_\phi^a, G^a_{n,\lambda_n}) = P_0 (\tilde f^a - \tilde{f}_\phi^a) (G_0^a-G^a_{n,\lambda_n}) =O_p(n^{-1/3}) O_p(n^{-1/3}).  
\]
Therefore, the proof is complete if we show $$h^{-r} O_p(n^{-1/2} n^{-1/6}) + h^{-r} O_p(n^{-1/3}n^{-1/3})+h^{-r}P_n D^a( \tilde{f}_\phi^a, G^a_{n,\lambda_n}) = o_p\left((nh^r)^{-1/2}\right).$$
Considering the optimal bandwidth rate of $h^r = n^{\frac{-r}{r+2J}}$, the first term satisfies the desired rate when $h^{-r} n^{-1/2} n^{-1/6} < n^{-1/2} h^{-r/2}$ which implies
$\frac{-1}{6} < \frac{-r}{2r+4J}$. The inequality is satisfied for $J>r$. We can similarly show that $h^{-r} O_p(n^{-2/3}) = o_p\left((nh^r)^{-1/2}\right)$ for $J>r$. Hence, for $J>r$,
\[
P_n D^a(f_h^a, G^a_{n,\lambda_n}) = o_p\left((nh^r)^{-1/2}\right) + h^{-r}P_n D^a( \tilde{f}_\phi^a, G^a_{n,\lambda_n}).
\]
In the following we show that $h^{-r}P_n D^a( \tilde{f}_\phi^a, G^a_{n,\lambda_n})$ is also of the order $o_p\left((nh^r)^{-1/2}\right)$.

For simplicity of notation, let $G^{a\dagger} = \logit G^a$. Define a set of score
functions generated by a path $\{1+\epsilon g(s,j)\} \beta_{n,s,j}$ for
a uniformly bounded vector $g$ as
\begin{equation*}
  S_g(G^a_{n,\lambda_n}) = \frac{d}{dG^{a\dagger}_{n,\lambda_n}} L(
  G^{a\dagger}_{n,\lambda_n}) \left\{ \sum_{(s,j)} g(s,j) \beta_{n,s,j} \phi_{s,j}
  \right\}.
\end{equation*}
When $L(\cdot)$ is the log-likelihood loss function,
$L(G) =  A \log \left(\frac{G}{1-G}\right) +\log(1-G)$.
Thus, $L(G^{a\dagger}) =  A \log G^{a\dagger} +\log(G^{a\dagger} -1)$ and
$S_g(G^a) =   (A - G^a_{n,\lambda_n}) \left\{\sum_{(s,j)} g(s,j)
\beta_{n,s,j} \phi_{s,j} \right\}$.
Let $r(g,G^a_{n,\lambda_n}) = \sum_{(s,j)} g(s,j) \lvert \beta_{n,s,j} \rvert$.
For small enough $\epsilon$,
\begin{align*}
 \sum_{(s,j)} \lvert \{1+\epsilon g(s,j)\} \beta_{n,s,j} \rvert &= \sum_{(s,j)}
   \{1 + \epsilon g(s,j)\} \lvert \beta_{n,s,j} \rvert \\
   &=\sum_{(s,j)} \lvert \beta_{n,s,j} \rvert + \epsilon r(g,G_{n,\lambda_n}).
\end{align*}
Hence,  for any $g$ satisfying $r(g,G_{n,\lambda_n})=0$, we have $P_n
S_g(G^a_{n,\lambda_n}) = 0$. 

Because $D^a\{\tilde{f}_\phi^a, G^a_{n,\lambda_n}\} \in
\{S_g(G^a_{n,\lambda_n}): \lVert g \rVert_{\infty} < \infty \}$, there
exists $g^{\star}$ such that $D^a(\tilde{f}_\phi^a, G^a_{n,\lambda_n}) = S_{g^{\star}}
(G^a_{n,\lambda_n})$. However, for this particular choice of $g^{\star}$,
$r(g,G^a_{n,\lambda_n})$ may not be zero. Now, define $g$ such that $ g(s,j)
= g^{\star}(s,j)$ for $(s,j) \neq (s^{\star}, j^{\star})$; $\tilde{g}
(s^{\star}, j^{\star})$ is defined such that
\begin{align}\label{eq:restr}
\sum_{(s,j) \neq (s^{\star},j^{\star})} g^{\star}(s,j) \lvert
 \beta_{n,s,j} \rvert +  g(s^{\star}, j^{\star})
 \lvert \beta_{n, s^{\star}, j^{\star}} \rvert = 0.
\end{align}
That is, $g$ matches $g^{\star}$ everywhere but for a single point
$(s^{\star}, j^{\star})$, where it is forced to take a value such that
$r(g,G_{n,\lambda_n})=0$. As a result, for such a choice of $g$, $P_n S_{g}
(G^a_{n,\lambda_n}) = 0$ by definition. Below, we show that $P_n
S_{g}(G^a_{n,\lambda_n}) - P_n D^a\{\tilde{f}_\phi^a, G^a_{n,\lambda_n}\} = o_p\left(n^{-1/2} h^{r/2}\right)$
which then implies that $h^{-r} P_n D^a\{\tilde{f}_\phi^a, G^a_{n,\lambda_n}\}
= o_p\left((nh^r)^{-1/2}\right)$. We note that the choice of $(s^{\star}, j^{\star})$ is
inconsequential.
\begin{align*}
   P_n S_{g}(G^a_{n,\lambda_n}) - P_n D^a\{\tilde{f}_\phi^a, G^a_{n,\lambda_n}\} &= P_n
   S_{g}(G^a_{n,\lambda_n}) - P_n S_{g^*}(G^a_{n,\lambda_n}) \\ &=P_n \left\{
   \frac{d}{dG^{a\dagger}_{n,\lambda_n}} L( G^{a\dagger}_{n,\lambda_n})
   \left[\sum_{(s,j)} \left\{g(s,j) - g^{\star}(s,j)\right\} \beta_{n,s,j}
   \phi_{s,j} \right]  \right\} \\ &= P_n
   \left[\frac{d}{dG^{a\dagger}_{n,\lambda_n}} L( G^{a\dagger}_{n,\lambda_n}) \left\{
   g(s^{\star},j^{\star}) - g^{\star}(s^{\star},j^{\star})\right\}
   \beta_{n,s^{\star},j^{\star}} \phi_{s^{\star},j^{\star}} \right] \\ & = P_n
   \left[\frac{d}{dG^{a\dagger}_{n,\lambda_n}} L( G^{a\dagger}_{n,\lambda_n})
   \kappa(s^{\star},j^{\star}) \phi_{s^{\star},j^{\star}} \right]\\ &=
   \kappa(s^{\star},j^{\star}) P_n \left[\frac{d}{dG^{a\dagger}_{n,\lambda_n}} L(
   G^{a\dagger}_{n,\lambda_n}) \phi_{s^{\star},j^{\star}} \right],
\end{align*}
where the third equality follows from equation (\ref{eq:restr}) above with
\begin{equation*}
 \kappa(s^{\star},j^{\star}) = -\frac{\sum_{(s,j) \neq (s^{\star},j^{\star})}
 g^{\star}(s,j) \lvert \beta_{n,s,j} \rvert }{\lvert
 \beta_{n,s^{\star},j^{\star}} \rvert} \beta_{n,s^{\star},j^{\star}} -
 g^{\star}(s^{\star},j^{\star}) \beta_{n,s^{\star},j^{\star}}.
\end{equation*}
Moreover,
\begin{equation*}
 \left\lvert \kappa(s^{\star},j^{\star})  \right\rvert \leq \sum_{(s,j)}
 \lvert g^{\star}(s,j)  \beta_{n,s,j} \rvert.
\end{equation*}
Assuming $\tilde f_\phi^a$ has finite sectional variation norm, the $L_1$ norm of the
coefficients approximating $\tilde f_\phi^a$ will be finite which implies that $\sum_{(s,j)}
 \lvert g^{\star}(s,j)  \beta_{n,s,j} \rvert$ is finite, and thus
$\lvert \kappa(s^{\star},j^{\star}) \rvert = O_p(1)$. Then,
\begin{align*}
 P_n S_{g}(G^a_{n,\lambda_n}) - P_n D^a(\tilde{f}_\phi^a, G^a_{n,\lambda_n}) &=
 O_p \left(P_n \left[\frac{d}{dG^{a\dagger}_{n,\lambda_n}} L(
 G^{a\dagger}_{n,\lambda_n})\phi_{s^{\star},j^{\star}} \right] \right) \\
 & = o_p\left(n^{-1/2}h^{r/2}\right),
\end{align*}
where the last equality follows from the assumption that $\min_{(s,j) \in
\mathcal{J}_n } \lVert P_n \frac{d}{dG^{a\dagger}_{n,\lambda_n}}
L(G^{a\dagger}_{n,\lambda_n}) (\phi_{s,j}) \rVert = o_p\left(n^{-1/2}h^{r/2}\right)$ for $L(\cdot)$
being log-likelihood loss. As $P_n S_{{g}}(G^a_{n,\lambda_n}) = 0$, it
follows that $P_n D^a(\tilde{f}_\phi^a,G^a_{n,\lambda_n}) = o_p\left(n^{-1/2}h^{r/2}\right)$ and thus $h^{-r}P_n D^a(\tilde{f}_\phi^a,G^a_{n,\lambda_n}) = o_p\left((nh^r)^{-1/2}\right)$. \\
We just showed that $P_n D^a(f_h^a,G^a_{n,\lambda_n}) = o_p\left((nh^r)^{-1/2}\right)$. Similarly we can show that $P_n D^c(f_h^c,G^c_{n,\lambda_n}) = o_p\left((nh^r)^{-1/2}\right)$ and $P_n D^\Psi(f_h^\Psi,\Psi_{n}) = o_p\left((nh^r)^{-1/2}\right)$ which completes the proof. 
\end{proof}
\begin{proof}[Proof of Lemma~\ref{lem:ucondition-fixedh}]
    The prove follows from the proof of Lemma \ref{lem:ucondition-movh} with the bandwidth $h$ replaced by a constant value (e.g., one).
\end{proof}
    \begin{proof}[Proof of Lemma~\ref{lem:ucondition-uconvergence}]
We first proof the result for the treatment indicator and $G^a$. 

\begin{align*}
   \sup_{v \in \mathcal{V}} \left| P_n D^a( f^a, G^a_{n,\lambda_n}) \right|
                      &\leq \sup_{v \in \mathcal{V}}\left|P_n D^a( f^a - \tilde{f}_\phi^a, G^a_{n,\lambda_n})\right| +\sup_{v \in \mathcal{V}}\left|P_n D^a( \tilde{f}_\phi^a, G^a_{n,\lambda_n})\right| \\
                     & \leq \sup_{v \in \mathcal{V}} \left|(P_n-P_0) D^a( f^a - \tilde{f}_\phi^a, G^a_{n,\lambda_n})\right| +\sup_{v \in \mathcal{V}} \left|P_0 D^a( f^a - \tilde{f}_\phi^a, G^a_{n,\lambda_n})\right|\\
                     &+\sup_{v \in \mathcal{V}} \left|P_n D^a( \tilde{f}_\phi^a, G^a_{n,\lambda_n})\right| \\
                     & =  O_p(n^{-1/2} n^{-1/6}) +  O_p(n^{-1/3}n^{-1/3})+\sup_{v \in \mathcal{V}} \left|P_n D^a( \tilde{f}_\phi^a, G^a_{n,\lambda_n})\right|.
\end{align*}
The last equality follows from $\sup_{v \in \mathcal{V}} \|\tilde f^a - \tilde{f}_\phi^a\|_2 = O_p(n^{-1/3})$ and Lemma \ref{lem:vw} as
\begin{align*}
\sup_{v \in \mathcal{V}} (P_n-P_0) D^a(\tilde f^a - \tilde{f}_\phi^a, G^a_{n,\lambda_n}) &\leq   \sup_{ \substack{ v \in \mathcal{V} \\ \|\tilde f^a - \tilde{f}_\phi^a\|_2 \leq n^{-1/3}}} | (P_n-P_0)D^a(\tilde f^a - \tilde{f}_\phi^a, G^a_{n,\lambda_n}) | \nonumber \\
                                 & =O_p\{  n^{-1/2} \mathcal{E}(n^{-1/3},L^2(P))\} \leq O_p(n^{-1/2}n^{-1/6}).
 \end{align*}
Moreover, by the convergence rate of the
highly adaptive lasso estimate (i.e., $\sup_{v \in \mathcal{V}}\lVert G^a_{0} - G^a_{n,\lambda_n}
\rVert_{2,P_0} = O_p(n^{-1/3})$), 
\[
\sup_{v \in \mathcal{V}} \left| P_0 D^a(\tilde f^a - \tilde{f}_\phi^a, G^a_{n,\lambda_n}) \right| = \sup_{v \in \mathcal{V}} \left| P_0 (\tilde f^a - \tilde{f}_\phi^a) (G_0^a-G^a_{n,\lambda_n})\right| =O_p(n^{-1/3}) O_p(n^{-1/3}).  
\]
Therefore, the proof is complete if we show $$ O_p(n^{-1/2} n^{-1/6}) +  O_p(n^{-1/3}n^{-1/3})+\sup_{v \in \mathcal{V}} \left|P_n D^a( \tilde{f}_\phi^a, G^a_{n,\lambda_n})\right| = o_p(n^{-1/2}).$$

\[
\sup_{v \in \mathcal{V}} \left| P_n D^a(f^a, G^a_{n,\lambda_n}) \right| = o_p(n^{-1/2}) + \sup_{v \in \mathcal{V}} \left|P_n D^a( \tilde{f}_\phi^a, G^a_{n,\lambda_n})\right|.
\]
In the following we show that $\sup_{v \in \mathcal{V}} \left|P_n D^a( \tilde{f}_\phi^a, G^a_{n,\lambda_n})\right|$ is also of the order $o_p(n^{-1/2})$.

For simplicity of notation, let $G^{a\dagger} = \logit G^a$. Define a set of score
functions generated by a path $\{1+\epsilon g(s,j)\} \beta_{n,s,j}$ for
a uniformly bounded vector $g$ as
\begin{equation*}
  S_g(G^a_{n,\lambda_n}) = \frac{d}{dG^{a\dagger}_{n,\lambda_n}} L(
  G^{a\dagger}_{n,\lambda_n}) \left\{ \sum_{(s,j)} g(s,j) \beta_{n,s,j} \phi_{s,j}
  \right\}.
\end{equation*}
When $L(\cdot)$ is the log-likelihood loss function,
$L(G) =  A \log \left(\frac{G}{1-G}\right) +\log(1-G)$.
Thus, $L(G^{a\dagger}) =  A \log G^{a\dagger} +\log(G^{a\dagger} -1)$ and
$S_g(G^a) =   (A - G^a_{n,\lambda_n}) \left\{\sum_{(s,j)} g(s,j)
\beta_{n,s,j} \phi_{s,j} \right\}$.
Let $r(g,G^a_{n,\lambda_n}) = \sum_{(s,j)} g(s,j) \lvert \beta_{n,s,j} \rvert$.
For small enough $\epsilon$,
\begin{align*}
 \sum_{(s,j)} \lvert \{1+\epsilon g(s,j)\} \beta_{n,s,j} \rvert &= \sum_{(s,j)}
   \{1 + \epsilon g(s,j)\} \lvert \beta_{n,s,j} \rvert \\
   &=\sum_{(s,j)} \lvert \beta_{n,s,j} \rvert + \epsilon r(g,G_{n,\lambda_n}).
\end{align*}
Hence,  for any $g$ satisfying $r(g,G_{n,\lambda_n})=0$, we have $P_n
S_g(G^a_{n,\lambda_n}) = 0$. 

Because $D^a\{\tilde{f}_\phi^a, G^a_{n,\lambda_n}\} \in
\{S_g(G^a_{n,\lambda_n}): \lVert g \rVert_{\infty} < \infty \}$, there
exists $g^{\star}$ such that $D^a(\tilde{f}_\phi^a, G^a_{n,\lambda_n}) = S_{g^{\star}}
(G^a_{n,\lambda_n})$. However, for this particular choice of $g^{\star}$,
$r(g,G^a_{n,\lambda_n})$ may not be zero. Now, define $g$ such that $ g(s,j)
= g^{\star}(s,j)$ for $(s,j) \neq (s^{\star}, j^{\star})$; $\tilde{g}
(s^{\star}, j^{\star})$ is defined such that
\begin{align}\label{eq:restr}
\sum_{(s,j) \neq (s^{\star},j^{\star})} g^{\star}(s,j) \lvert
 \beta_{n,s,j} \rvert +  g(s^{\star}, j^{\star})
 \lvert \beta_{n, s^{\star}, j^{\star}} \rvert = 0.
\end{align}
That is, $g$ matches $g^{\star}$ everywhere but for a single point
$(s^{\star}, j^{\star})$, where it is forced to take a value such that
$r(g,G_{n,\lambda_n})=0$. As a result, for such a choice of $g$, $P_n S_{g}
(G^a_{n,\lambda_n}) = 0$ by definition. Below, we show that $P_n
S_{g}(G^a_{n,\lambda_n}) - P_n D^a\{\tilde{f}_\phi^a, G^a_{n,\lambda_n}\} = o_p(n^{-1/2} )$
which then implies that $ P_n D^a\{\tilde{f}_\phi^a, G^a_{n,\lambda_n}\}
= o_p(n^{-1/2})$. We note that the choice of $(s^{\star}, j^{\star})$ is
inconsequential.
\begin{align*}
   P_n S_{g}(G^a_{n,\lambda_n}) - P_n D^a\{\tilde{f}_\phi^a, G^a_{n,\lambda_n}\} &= P_n
   S_{g}(G^a_{n,\lambda_n}) - P_n S_{g^*}(G^a_{n,\lambda_n}) \\ &=P_n \left\{
   \frac{d}{dG^{a\dagger}_{n,\lambda_n}} L( G^{a\dagger}_{n,\lambda_n})
   \left[\sum_{(s,j)} \left\{g(s,j) - g^{\star}(s,j)\right\} \beta_{n,s,j}
   \phi_{s,j} \right]  \right\} \\ &= P_n
   \left[\frac{d}{dG^{a\dagger}_{n,\lambda_n}} L( G^{a\dagger}_{n,\lambda_n}) \left\{
   g(s^{\star},j^{\star}) - g^{\star}(s^{\star},j^{\star})\right\}
   \beta_{n,s^{\star},j^{\star}} \phi_{s^{\star},j^{\star}} \right] \\ & = P_n
   \left[\frac{d}{dG^{a\dagger}_{n,\lambda_n}} L( G^{a\dagger}_{n,\lambda_n})
   \kappa(s^{\star},j^{\star}) \phi_{s^{\star},j^{\star}} \right]\\ &=
   \kappa(s^{\star},j^{\star}) P_n \left[\frac{d}{dG^{a\dagger}_{n,\lambda_n}} L(
   G^{a\dagger}_{n,\lambda_n}) \phi_{s^{\star},j^{\star}} \right],
\end{align*}
where the third equality follows from equation (\ref{eq:restr}) above with
\begin{equation*}
 \kappa(s^{\star},j^{\star}) = -\frac{\sum_{(s,j) \neq (s^{\star},j^{\star})}
 g^{\star}(s,j) \lvert \beta_{n,s,j} \rvert }{\lvert
 \beta_{n,s^{\star},j^{\star}} \rvert} \beta_{n,s^{\star},j^{\star}} -
 g^{\star}(s^{\star},j^{\star}) \beta_{n,s^{\star},j^{\star}}.
\end{equation*}
Moreover,
\begin{equation*}
 \left\lvert \kappa(s^{\star},j^{\star})  \right\rvert \leq \sum_{(s,j)}
 \lvert g^{\star}(s,j)  \beta_{n,s,j} \rvert.
\end{equation*}
Assuming $\tilde f_\phi^a$ has finite sectional variation norm, the $L_1$ norm of the
coefficients approximating $\tilde f_\phi^a$ will be finite which implies that $\sum_{(s,j)}
 \lvert g^{\star}(s,j)  \beta_{n,s,j} \rvert$ is finite, and thus
$\lvert \kappa(s^{\star},j^{\star}) \rvert = O_p(1)$. Hence,

\begin{align*}
   \sup_{v \in \mathcal{V}} \left|  P_n S_{g}(G^a_{n,\lambda_n}) - P_n D^a(\tilde{f}_\phi^a, G^a_{n,\lambda_n}) \right| &=
 O_p \left( \sup_{v \in \mathcal{V}} \left| P_n \left[\frac{d}{dG^{a\dagger}_{n,\lambda_n}} L(
 G^{a\dagger}_{n,\lambda_n})\phi_{s^{\star},j^{\star}} \right] \right|\right) \\
 & = o_p(n^{-1/2}),
\end{align*}
where the last equality follows from the assumption that $\min_{(s,j) \in
\mathcal{J}_n } \sup_{v \in \mathcal{V}} \lVert P_n \frac{d}{dG^{a\dagger}_{n,\lambda_n}}
L(G^{a\dagger}_{n,\lambda_n}) (\phi_{s,j}) \rVert = o_p(n^{-1/2})$ for $L(\cdot)$
being log-likelihood loss. As $P_n S_{{g}}(G^a_{n,\lambda_n}) = 0$, it
follows that $P_n D^a(\tilde{f}_\phi^a,G^a_{n,\lambda_n}) = o_p(n^{-1/2})$. \\
We just showed that $P_n D^a(f^a,G^a_{n,\lambda_n}) = o_p(n^{-1/2})$. Similarly we can show that $P_n D^c(f^c,G^c_{n,\lambda_n}) = o_p(n^{-1/2})$ and $P_n D^\Psi(f^\Psi,\Psi_{n}) = o_p(n^{-1/2})$ which completes the proof. 
\end{proof}


\section{Other Lemmas}

\begin{lemma}[Lemma 3.4.2 in \cite{vaart1996weak}] \label{lem:vw}
For a class of functions $\mathcal{F}$ with envelop $\textbf{F} = \sup_{f \in \mathcal{F}} |f(x)|$ bounded from above by $M < \infty$, we have
\[
E_P \sup_{f \in \mathcal{F}, \|f\|_P \leq \delta} n^{1/2}(P_n-P_0)(f) \leq \mathcal{E}(\delta,\mathcal{F},L^2(P)) \left\{1+\frac{\mathcal{E}(\delta,\mathcal{F},L^2(P))}{\delta^2 n^{1/2}}M\right\}.
\]
This implies that $\sup_{f \in \mathcal{F}, \|f\|_P \leq \delta} n^{1/2}(P_n-P_0)(f)$ is bounded in probability by the right-hand side.
\end{lemma}

\begin{lemma}[Proposition 2 in \cite{bibaut2019fast}] \label{lem:laan}
Let $p \geq 2$ and $M>0$. Denote $\mathcal{F}_{p,M}$ the class of cadlag functions on $[0,1]^p$ with sectional variation norm smaller than $M$. Suppose that $P_0$ is such that, for all $1 \leq r \leq \infty$, for all real-valued function $f$ on $[0,1]^p$, $\| f\|_{P_0,b} = c(b)\| f\|_{\mu,b}$, for some $c(b)>0$, and where $\mu$ is the Lebesgue measure. Then, for any $0<\delta<1$, the bracketing entropy integral of $\mathcal{F}_{p,M}$ with respect to $\| .\|_{P_0,b}$ norm is bounded by
\[
\mathcal{E}_{[]}(\delta,\mathcal{F}_{p,M},\| .\|_{P_0,b}) \leq \{C(b,p) M \delta\}^{1/2} \mid \log(\delta/M) \mid ^{p-1},
\]
where $C(b,p)$ is a constant that depends only on $b$ and $p$.
\end{lemma}

\begin{lemma} \label{lem:hnneg}
Consider a sequence of processes $\{n^{1/2}(P_n-P_0) H_{n,h} (f) \}$ where
\[
H_{n,h} (f) = h^{r/2} K_{h,\bm{v}_0}  ( f_n- f_0). 
\]
Assuming the following:
\begin{itemize}
\item[(i)] The function  $f_0$ belongs to a cadlag class of functions with finite sectional variation norm.
\item[(ii)] The function $f_n$ is the highly adaptive lasso estimate of $f_0$.
\item[(iii)] The bandwidth $h_n$ satisfies $h^r \rightarrow 0$ and $nh^{r3/2} \rightarrow \infty$.
\end{itemize}
Then $n^{1/2}(P_n-P_0) H_{n,h} (f_n) = o_p(1)$.
\end{lemma}
\begin{proof}
Let $\tilde K_{h,\bm{v}_0}(v) = h^r K_{h,\bm{v}_0}(v)$. Define  $ h^{-r/2}\tilde H_{n,h}(f) =H_{n,h}(f)$. The function $\tilde H_n(f)$ belongs to a class of function $\mathcal{F}_n = \{\tilde H_{n,h}(f) : h\}$. We have
\begin{align*}
\| \tilde H_n(f_n) \|_{P_0} &\leq \| \tilde K_{h,\bm{v}_0}(v)  \|_{P_0} \|   ( f_n-f_0 )  \|_{P_0} \\
                                                  &= O_p(h^{r/2})O_p(n^{-1/3}) 
\end{align*}
where the last equality follows from the kernel and the highly adaptive lasso rate of convergence.  Using the result of Lemma \ref{lem:laan}, $n^{1/2} (P_n-P_0) \tilde H_n(f_n) = O_p(h^{r/4} n^{-1/6})$. Therefore, 
\begin{align*}
n^{1/2}(P_n-P_0)  H_n(f_n) = O_p(h^{-r/4}n^{-1/6}).
\end{align*}
Thus, to obtain the desired rate we must have $h^{-r/4}n^{-1/6} =o(1)$ which implies that $h^r$ has to converge to zero slower than $n^{-2/3}$ (i.e., $h^r>n^{-2/3}$). Note that under continuity and no smoothness assumptions, the rate for the bandwidth is   $h^r=n^{-1/3}$ which is much slower than $n^{-2/3}$. Thus, under continuity, for every level of smoothness the choice of optimal $h$ will satisfy the rate $h^r=n^{-2/3}$ (i.e., no condition is imposed).

\end{proof}

\begin{lemma} \label{lem:pnterm}
Let the function $f_n$ is the highly adaptive lasso estimate of $f_0$ where $f_0$ belongs to a cadlag class of functions with finite sectional variation norm. Then
\[
P_n  \frac{K_{h,\bm{v}_0}f_0}{P_0 K_{h,\bm{v}_0}} \left(f_0-f_n\right) = P_n  \frac{K_{h,\bm{v}_0}f_n}{P_n K_{h,\bm{v}_0}} \left(f_0-f_n\right)+o_p\left((nh^r)^{-1/2}\right).
\]
\end{lemma}
\begin{proof}
We have {\small
\begin{align*}
    P_n  \frac{K_{h,\bm{v}_0}f_0}{P_0 K_{h,\bm{v}_0}} \left(f_0-f_n\right) &= P_n  \frac{K_{h,\bm{v}_0}f_n}{P_n K_{h,\bm{v}_0}} \left(f_0-f_n\right) + P_n K_{h,\bm{v}_0} (f_0-f_n) \left( \frac{f_0}{P_0 K_{h,\bm{v}_0}}-\frac{f_n}{P_n K_{h,\bm{v}_0}}\right) \\
    &=P_n  \frac{K_{h,\bm{v}_0}f_n}{P_n K_{h,\bm{v}_0}} \left(f_0-f_n\right) + (P_n-P_0) K_{h,\bm{v}_0} (f_0-f_n) \left( \frac{f_0}{P_0 K_{h,\bm{v}_0}}-\frac{f_n}{P_n K_{h,\bm{v}_0}}\right)\\
    &\hspace{2in}+P_0 K_{h,\bm{v}_0} (f_0-f_n) \left( \frac{f_0}{P_0 K_{h,\bm{v}_0}}-\frac{f_n}{P_n K_{h,\bm{v}_0}}\right)\\
    &=P_n  \frac{K_{h,\bm{v}_0}f_n}{P_n K_{h,\bm{v}_0}} \left(f_0-f_n\right)+o_p\left((nh^r)^{-1/2}\right).
\end{align*}}
The last equality follows because $P_0 K_{h,\bm{v}_0} (f_0-f_n) \left( \frac{f_0}{P_0 K_{h,\bm{v}_0}}-\frac{f_n}{P_n K_{h,\bm{v}_0}}\right)=o_p\left((nh^r)^{-1/2}\right)$ by the highly adaptive lasso rate of convergence. 
\end{proof}

\begin{lemma} \label{lm:convrate}
Let $ P_0L_{G_0}(\Psi_n,\Psi_0) \equiv P_0L_{G_0}(\Psi_n) - P_0L_{G_0}(\Psi_0) $ where  $L_{G}$ is the weighted squared-loss function defined in (\ref{eq:lossf}) and $\mathcal{M}_\lambda=\{ m \in \mathcal{M}: \| m\|_{\nu} < \lambda\}$ be the set of cadlag functions with variation norm smaller than $\lambda$. Then, uniformly over $\lambda$, we have
\[
\left | P_0 \{L_{G_0}(\Psi_n,\Psi_0)-L_{G_n}(\Psi_n,\Psi_0)\}\right | \leq C \left\{ d_{01}(G_n,G_0)  \right\}^{1/2} \left\{ P_0L_{G_0}(\Psi_n,\Psi_0) \right\}^{1/2},
\]
where $C$ is a positive finite constant. 
\end{lemma}

\begin{lemma}\label{lem:wrate}
Suppose $\|\theta-\theta_0\|_2 = O_p(\delta_n)$ and $f(\theta) = \frac{h^rK_{h,\bm{v}_0} I( A= d^\theta)}{G_0(\theta)}Q_0(\theta)$. Under Assumption \ref{assump:margin}, $\|f(\theta)-f(\theta_0)\|_2 = O_p(h^{r/2}\delta_n)$.
\end{lemma}
\begin{proof}
We assume that $\bm{S}$ and $Q_0(\theta)$ are bounded uniformly by some constants $C_S$ and $C_Q$. Moreover, by strong positivity assumption $\min\{G_0(\theta,\bm{W}),1-G_0(\theta,\bm{W})\}>\gamma$ for all $\bm{W} \in \mathcal{W}$ and $\theta \in \Theta$.  By definition the norm  can be written  as
\begin{align*}
    \|f(\theta)-f(\theta_0)\|_2 &= \left\|h^rK_{h,\bm{v}_0} \left\{\frac{I( A= d^\theta)}{G_0(\theta)}Q_0(\theta)- \frac{I( A= d^{\theta_0})}{G_0(\theta_0)}Q_0(\theta_0) \right\} \right\|_2\\
    & \leq \|h^rK_{h,\bm{v}_0} \|_2 \left\|(\gamma)^{-1} \max\{|Q_0(\theta_0)|,|Q_0(\theta)|\} I(d^{\theta} \neq d^{\theta_0})\right\|_2\\
    & = \|h^rK_{h,\bm{v}_0} \|_2 \left\|(\gamma)^{-1} \max\{|Q_0(\theta_0)|,|Q_0(\theta)|\} I(d^{\theta} \neq d^{\theta_0}) \{ I(|\bm{S}^\top \theta_0|>t)+I(|\bm{S}^\top \theta_0|<t)\}\right\|_2\\
    & \leq \gamma^{-1} C_{Q} \|h^rK_{h,\bm{v}_0} \|_2 \left\{C t^{\kappa/2}+\left\|I(|\bm{S}^\top \theta_0-\bm{S}^\top \theta|>t) \right\|_2\right\}\\
    & \leq \gamma^{-1} C_{Q} \|h^rK_{h,\bm{v}_0} \|_2 \left\{ C t^{\kappa/2}+ \frac{\|\bm{S}^\top \theta_0-\bm{S}^\top \theta\|_2}{t}\right\}.
\end{align*}
The forth inequality follows from Assumption \ref{assump:margin},  the Cauchy-Schwarz and $|\bm{S}^\top \theta_0-\bm{S}^\top \theta|>|\bm{S}^\top \theta_0|$. The latter holds because because $I(d^\theta \neq d^{\theta_0}) =  1$  when (a) $\bm{S}^\top  {{\theta}}>0$ and $\bm{S}^\top \theta_0<0$; or (b) $\bm{S}^\top \theta<0$ and $\bm{S}^\top  \theta_0>0$. The former and latter imply that $\bm{S}^\top  {\theta}_0-\bm{S}^\top  \theta<\bm{S}^\top  \theta_0<0$ and $0<\bm{S}  \theta_0<\bm{S}^\top  \theta_0-\bm{S}^\top  \theta$, respectively, and thus, $|\bm{S}^\top \theta_0-\bm{S}^\top \theta|>|\bm{S}^\top \theta_0|$.
The fifth inequality holds by Markov inequalities. 
The upper bound is minimized when $t=C_t \|\bm{S}^\top \theta_0-\bm{S}^\top \theta\|_2^{\frac{1}{\kappa/2+1}}$ for a constant $C_t$ which depends on $\kappa$ and $C$. Hence, 
\begin{align} \label{lem:wbound}
    \|f(\theta)-f(\theta_0)\|_2 &\leq C^{\dagger} \|h^rK_{h,\bm{v}_0} \|_2 \left\{ C \|\bm{S}^\top \theta_0-\bm{S}^\top \theta\|_2^{\frac{\kappa}{\kappa+2}}+ \|\bm{S}^\top \theta_0-\bm{S}^\top \theta\|_2^{\frac{\kappa}{\kappa+2}}\right\}\nonumber\\
    &=O_p(h^{r/2}\delta^{\frac{\kappa}{\kappa+2}}). 
\end{align}
Hence where there is a margin around zero, that is, $pr(0<|\bm{S}^\top{{\theta}}_0|<l) 	=0$, the right hand side of (\ref{lem:wbound}) reduces to $O_p(h^{r/2}\delta)$. 
\end{proof}

\section{Proof of Theorem \ref{th:halrate}: Rate of convergence of HAL with unknown nuisance parameters}

Define $ P_0L_{G_0}(\Psi_n,\Psi_0) \equiv P_0L_{G_0}(\Psi_n) - P_0L_{G_0}(\Psi_0) $. Then using the definition of the loss-based dissimilarity, we have
\begin{align*}
0\leq d_0(\Psi_n,\Psi_0 )&=P_0L_{G_0}(\Psi_n,\Psi_0)\\
          &=(P_0-P_n) L_{G_0}(\Psi_n,\Psi_0) + P_n L_{G_0}(\Psi_n,\Psi_0) \\
          &=O_p(n^{-2/3} (\log n)^{4(p-1)/3})+P_n L_{G_n}(\Psi_n,\Psi_0)+ P_n\{L_{G_0}(\Psi_n,\Psi_0)-L_{G_n}(\Psi_n,\Psi_0)\} \\
          &\leq O_p(n^{-2/3} (\log n)^{4(p-1)/3})+(P_n-P_0) \{L_{G_0}(\Psi_n,\Psi_0)-L_{G_n}(\Psi_n,\Psi_0)\} \\
          &\hspace{3in} + P_0 \{L_{G_0}(\Psi_n,\Psi_0)-L_{G_n}(\Psi_n,\Psi_0)\} \\
          &=P_0 \{L_{G_0}(\Psi_n,\Psi_0)-L_{G_n}(\Psi_n,\Psi_0)\} +O_p(n^{-2/3} (\log n)^{4(p-1)/3})
\end{align*}
Moreover, using Lemma \ref{lm:convrate} we have
\begin{align*}
d_0(\Psi_n,\Psi_0 ) &\leq C \left\{ d_{01}(G_n,G_0)  \right\}^{1/2} \left\{ d_0(\Psi_n,\Psi_0 )  \right\}^{1/2} + O_p(n^{-2/3} (\log n)^{4(p-1)/3}) \\
              &= O_p(n^{-2/3} (\log n)^{4(p-1)/3}) +  O_p(n^{-1/3} (\log n)^{2(p-1)/3}) \left\{ d_0(\Psi_n,\Psi_0 )  \right\}^{1/2}
\end{align*}
where $C$ is a positive finite  constant. The last equality follows from the convergence rate of HAL minimum loss based estimator of $g_0$. Let $r_1(n)=O_p(n^{-2/3} (\log n)^{4(p-1)/3})$ and $r_2(n)=O_p(n^{-1/3} (\log n)^{2(p-1)/3})$,  the above equality is equivalent to
\[
 \left\{d_0(\Psi_n,\Psi_0 )\right\}^{1/2} \leq \frac{ \{4 r_1(n) + r_2^2(n) \}^{1/2} +r_2(n) }{2}
\]
which implies that $d_0(\Psi_n,\Psi_0 )=O_p(n^{-2/3} (\log n)^{4(p-1)/3})$.

\section{Proof of Theorem \ref{th:movh}: Asymptotic linearity of  the regimen-response curve estimator}

We represent the difference between $\Psi_{nh}(\bm{v}_0,\theta)$ and $\Psi_0(\bm{v}_0,\theta)$ as
\[
\Psi_{nh}(\bm{v}_0,\theta) -  \Psi_0(\bm{v}_0,\theta)= \{ \Psi_{0h}(\bm{v}_0,\theta) - \Psi_0(\bm{v}_0,\theta)\}+\{\Psi_{nh}(\bm{v}_0,\theta) -  \Psi_{0h}(\bm{v}_0,\theta)\}.
\]
Moreover, we can write
\begin{align} \label{eq:proofth1}
\Psi_{nh}(\bm{v}_0,\theta) -  \Psi_{0h}(\bm{v}_0,\theta) = &P_n  K_{h,\bm{v}_0}\Psi_n \left(\frac{1}{P_n K_{h,\bm{v}_0}}-\frac{1}{P_0 K_{h,\bm{v}_0}} \right) \\
&+ (P_n-P_0)\frac{K_{h,\bm{v}_0}}{P_0 K_{h,\bm{v}_0}}(\Psi_{n}-\Psi_{0}) + (P_n-P_0)\frac{K_{h,\bm{v}_0}}{P_0 K_{h,\bm{v}_0}}(\Psi_{0}) \nonumber\\
    &+P_0\frac{K_{h,\bm{v}_0}}{P_0 K_{h,\bm{v}_0}}(\Psi_{n}-\Psi_{0}). \nonumber
\end{align}

\subsection{Convergence  rate of $ \Psi_{0h}(v_0,\theta) -  \Psi_0(v_0,\theta)$}  
We consider a ($J-1$)-orthogonal kernel with bandwidth $h$ centered at $\bm{v}_0$. Then following Lemma 25.1 in \cite{van2018targeted}, as $h \rightarrow 0$,
\[
\Psi_{0h}(\bm{v}_0,\theta) - \Psi_0(\bm{v}_0,\theta) = h^{J} B_0(J,\bm{v}_0),
\]
where 
\[
B_0(J,\bm{v}_0) = \sum_{\{\eta \in \{0,\cdots,J\}^d: \sum_l \eta_l = J\}} \Psi_0(\bm{v}_0)^\eta \int_s k(s) \frac{\prod_l s_l ^{\eta_l}}{\prod_l \eta_l !} ds.
\]

\subsection{Asymptotic behaviour of the first term on the RHS of (\ref{eq:proofth1})}  
Because, $(P_n-P_0) K_{h,\bm{v}_0} = O_p(n^{-1/2} h^{-r/2})$, 
\begin{align*} 
P_n  K_{h,\bm{v}_0}  \Psi_n \left(\frac{1}{P_n K_{h,\bm{v}_0}}-\frac{1}{P_0 K_{h,\bm{v}_0}}\right) =&
(P_n-P_0)  K_{h,\bm{v}_0}  \Psi_n \left(\frac{1}{P_n K_{h,\bm{v}_0}}-\frac{1}{P_0 K_{h,\bm{v}_0}}\right) \\
&+P_0  K_{h,\bm{v}_0}  \Psi_n \left(\frac{1}{P_n K_{h,\bm{v}_0}}-\frac{1}{P_0 K_{h,\bm{v}_0}}\right)\\
=&P_0  K_{h,\bm{v}_0}  \Psi_n \left(\frac{-(P_n-P_0) K_{h,\bm{v}_0}}{P_n K_{h,\bm{v}_0}P_0 K_{h,\bm{v}_0}}\right)+o_p(n^{-1/2}h^{-r/2})\\
=&P_0  K_{h,\bm{v}_0}  \Psi_n \left(\frac{-(P_n-P_0) K_{h,\bm{v}_0}}{P_0^2 K_{h,\bm{v}_0}}\right)\\
&+P_0  K_{h,\bm{v}_0}  \Psi_n \left(\frac{-\{(P_n-P_0) K_{h,\bm{v}_0}\}^2}{P_n K_{h,\bm{v}_0}P_0 K_{h,\bm{v}_0}}\right)+o_p(n^{-1/2}h^{-r/2})\\
& =\frac{-(P_n-P_0)K_{h,\bm{v}_0}}{P_0K_{h,\bm{v}_0}} P_0  \frac{K_{h,\bm{v}_0}}{P_0K_{h,\bm{v}_0}}(\Psi_n-\Psi_{0})+\frac{-(P_n-P_0)K_{h,\bm{v}_0}}{P_0K_{h,\bm{v}_0}} \Psi_{0h}\\
&+o_p(n^{-1/2}h^{-r/2})\\
& =\frac{-(P_n-P_0)K_{h,\bm{v}_0}}{P_0K_{h,\bm{v}_0}} \Psi_{0h}+o_p(n^{-1/2}h^{-r/2})
\end{align*}
The second and third equalities follows because $\{(P_n-P_0) K_{h,\bm{v}_0}\}^2=O_p(n^{-1} h^{-r})$, and thus, those are of order $o_p(n^{-1/2}h^{-r/2})$. In the last equality we have also used the consistency of $\Psi_n$.


\subsection{Asymptotic negligibility of the second term on the RHS of (\ref{eq:proofth1})} \label{sec:th1p2}
We will show that the second order term $(P_n-P_0) K_{h,\bm{v}_0}  (\Psi_n-\Psi_0) = o_p\left((nh^r)^{-1/2}\right)$.
Let $\tilde K_{h,\bm{v}_0}(v) = h^r K_{h,\bm{v}_0}(v)$. Define $H_{n,h}(m) = K_{h,\bm{v}_0}  (m-\Psi_0)$ and $ h^{-r}\tilde H_{n,h}(m) =H_{n,h}(m)$. The function $\tilde H_n(m)$ belongs to a class of function $\mathcal{F}_n = \{\tilde H_{n,h}(m) : h\}$. We have
\begin{align*}
\| \tilde H_n(\Psi_n) \|_{P_0} &\leq \| \tilde K_{h,\bm{v}_0}(v)  \|_{P_0} \|  ( \Psi_n -\Psi_0) \|_{P_0} \\
                                                  &= O_p(h^{r/2})O_p(n^{-1/3}). 
\end{align*}
The rest follows from Lemma \ref{lem:hnneg}. Therefore, $(P_n-P_0)\frac{K_{h,\bm{v}_0}}{P_0 K_{h,\bm{v}_0}}(\Psi_{n}-\Psi_{0})=o_p\left((nh^r)^{-1/2}\right)$ when the bandwidth $h_n$ satisfies $h^r \rightarrow 0$ and $nh^{r3/2} \rightarrow \infty$.

\subsection{Asymptotic behaviour of the forth term on the RHS of (\ref{eq:proofth1})}  \label{sec:th1p3}
The forth term can be written as 
\begin{align*}
    P_0\frac{K_{h,\bm{v}_0}}{P_0 K_{h,\bm{v}_0}}(\Psi_{n}-\Psi_{0}) =&P_0\frac{K_{h,\bm{v}_0}\Delta^c I( A= d^\theta)}{G_0P_0 K_{h,\bm{v}_0}}(\Psi_{n}-\Psi_{0}) \\
    =&P_0\frac{K_{h,\bm{v}_0}\Delta^c I( A= d^\theta)}{G_0P_0 K_{h,\bm{v}_0}}\{\Psi_{n}-I(Y>t)\}\\
    =&(P_n-P_0)\frac{K_{h,\bm{v}_0}\Delta^c I( A= d^\theta)}{G_0P_0 K_{h,\bm{v}_0}}\{I(Y>t)-\Psi_{0}\}\\
    &-P_n \frac{K_{h,\bm{v}_0}\Delta^c I( A= d^\theta)}{G_0P_0 K_{h,\bm{v}_0}} \{I(Y\geq t)-\Psi_n\}+o_p\left((nh^r)^{-1/2}\right).
\end{align*}
The third equality follows from Section \ref{sec:th1p2}.

The last term on the RHS can be represented as
\begin{align*}
P_n \frac{K_{h,\bm{v}_0}\Delta^c I( A= d^\theta)}{G_0P_0 K_{h,\bm{v}_0}} \{I(Y\geq t)-\Psi_n\} &= P_n \frac{K_{h,\bm{v}_0}\Delta^c I( A= d^\theta)}{G_nP_0 K_{h,\bm{v}_0}} \{I(Y\geq t)-\Psi_n\}\\
      & +P_n \frac{K_{h,\bm{v}_0}\Delta^c I( A= d^\theta)}{P_0 K_{h,\bm{v}_0}} \{I(Y\geq t)-\Psi_n\}\left(\frac{G_n-G_0}{G_0G_n}\right)\\
      &=P_n \frac{K_{h,\bm{v}_0}\Delta^c I( A= d^\theta)}{G_nP_0 K_{h,\bm{v}_0}} \{I(Y\geq t)-\Psi_n\}+ \\
      &(P_n-P_0) \frac{K_{h,\bm{v}_0}\Delta^c I( A= d^\theta)}{P_0 K_{h,\bm{v}_0}} \{I(Y\geq t)-\Psi_n\}\left(\frac{G_n-G_0}{G_0G_n}\right)\\
      & +P_0 \frac{K_{h,\bm{v}_0}\Delta^c I( A= d^\theta)}{P_0 K_{h,\bm{v}_0}} \{I(Y\geq t)-\Psi_n\}\left(\frac{G_n-G_0}{G_0G_n}\right).
\end{align*}
Using Lemma \ref{lem:hnneg} and similar techniques used in Section \ref{sec:th1p2}, we have 
$(P_n-P_0) \frac{K_{h,\bm{v}_0}\Delta^c I( A= d^\theta)}{P_0 K_{h,\bm{v}_0}} \{I(Y\geq t)-\Psi_n\}\left(\frac{G_n-G_0}{G_0G_n}\right)=o_p\left((nh^r)^{-1/2}\right)$.

Let $Q_0 = E \{I(Y\geq t) | A= d^\theta,\bm{W}\}$. Then
\begin{align*}
    P_0 \frac{K_{h,\bm{v}_0}\Delta^c I( A= d^\theta)}{P_0 K_{h,\bm{v}_0}} \{I(Y\geq t)-\Psi_n\}\left(\frac{G_n-G_0}{G_0G_n}\right) =& P_0 \frac{K_{h,\bm{v}_0}\Delta^c I( A= d^\theta)}{P_0 K_{h,\bm{v}_0}} (Q_0-\Psi_n) \left(\frac{G_n-G_0}{G_0G_n}\right)\\
    =&-P_0 \frac{K_{h,\bm{v}_0}G_0}{P_0 K_{h,\bm{v}_0}} (Q_0-\Psi_n)\left(\frac{G_0-G_n }{G_0^2 }\right)\\
    &+P_0 \frac{K_{h,\bm{v}_0}G_0}{P_0 K_{h,\bm{v}_0}} (Q_0-\Psi_n)\left(\frac{(G_0-G_n)^2 }{G_nG_0^2 }\right).
\end{align*}
Using the rate of convergence of the highly adaptive lasso estimator $P_0 \frac{K_{h,\bm{v}_0}G_0}{P_0 K_{h,\bm{v}_0}} (Q_0-\Psi_n)\left(\frac{(G_0-G_n)^2 }{G_nG_0^2 }\right)=o_p\left((nh^r)^{-1/2}\right)$. Moreover,

\begin{align*}
P_0   \frac{K_{h,\bm{v}_0}(Q_0-\Psi_n)}{P_0 K_{h,\bm{v}_0}}\left(\frac{G_0-G_n }{G_0}\right)&= P_0 \frac{K_{h,\bm{v}_0}(Q_0-\Psi_n)}{P_0 K_{h,\bm{v}_0}}\left(\frac{G_{0}^cG_{0}^a-G_{n}^cG_{n}^a }{G_{0}^cG_{0}^a}\right)  \\
&=P_0 \frac{K_{h,\bm{v}_0}(Q_0-\Psi_n)}{P_0 K_{h,\bm{v}_0}} G_{0}^a\left(\frac{G_{0}^c-G_{n}^c }{G_{0}^cG_{0}^a}\right)+P_0 \frac{K_{h,\bm{v}_0}(Q_0-\Psi_n)}{P_0 K_{h,\bm{v}_0}} G_{n}^c\left(\frac{G_{0}^a-G_{n}^a}{G_{0}^cG_{0}^a}\right) \\
& = -(P_n-P_0) \frac{K_{h,\bm{v}_0}I( A= d^\theta)}{G_0^a P_0 K_{h,\bm{v}_0}} (Q_0-\Psi_0)\left(\frac{\Delta^c-G_{0}^c }{G_0^c}\right) \\
& \hspace{.2in}-(P_n-P_0) \frac{K_{h,\bm{v}_0}(Q_0-\Psi_0)}{P_0 K_{h,\bm{v}_0}} \left(\frac{I( A= d^\theta)-G_{0}^a}{G_0^a}\right)\\
&\hspace{.2in} + P_n \frac{K_{h,\bm{v}_0}I( A= d^\theta)}{G_0^a P_0  K_{h,\bm{v}_0}}(Q_0-\Psi_n)\left(\frac{\Delta^c-G_{n}^c }{G_0^c}\right) \\
&\hspace{.2in} + P_n  \frac{K_{h,\bm{v}_0 }(Q_0-\Psi_n)}{P_0 K_{h,\bm{v}_0}} G_{n}^c\left(\frac{I( A= d^\theta)-G_{n}^a}{G_{0}^cG_{0}^a}\right)+o_p\left((nh^r)^{-1/2}\right).
\end{align*}
The first equality follows because {\small
\begin{align*}
(P_n-P_0) \frac{K_{h,\bm{v}_0}I( A= d^\theta)}{G_0^a P_0 K_{h,\bm{v}_0}} (Q_0-\Psi_n) \left(\frac{\Delta^c-G_{n}^c }{G_0^c}\right) &= (P_n-P_0) \frac{K_{h,\bm{v}_0}I( A= d^\theta)}{G_0^a P_0 K_{h,\bm{v}_0}} (Q_0-\Psi_0) \left(\frac{\Delta^c-G_{0}^c }{G_0^c}\right)\\
&\hspace{2.5in}+o_p\left((nh^r)^{-1/2}\right), \\
(P_n-P_0) \frac{K_{h,\bm{v}_0}(Q_0-\Psi_n)}{P_0 K_{h,\bm{v}_0}} G_{n}^c\left(\frac{I( A= d^\theta)-G_{n}^a}{G_0^a G_0^c}\right)&=(P_n-P_0) \frac{K_{h,\bm{v}_0}(Q_0-\Psi_0)}{P_0 K_{h,\bm{v}_0}} \left(\frac{I( A= d^\theta)-G_{0}^a}{G_0^a}\right)\\
&\hspace{2.5in}+o_p\left((nh^r)^{-1/2}\right).
\end{align*}}
Moreover, by Lemma \ref{lem:pnterm}, we have {\small 
\begin{align*}
    P_n \frac{K_{h,\bm{v}_0}I( A= d^\theta)}{G_0^a P_0 K_{h,\bm{v}_0}} (Q_0-\Psi_n)\left(\frac{\Delta^c-G_{n}^c }{G_0^c}\right) &= P_n \frac{K_{h,\bm{v}_0}I( A= d^\theta)}{G_n^a P_0 K_{h,\bm{v}_0}} (Q_n-\Psi_n)\left(\frac{\Delta^c-G_{n}^c }{G_n^c}\right)+o_p\left((nh^r)^{-1/2}\right)\\
    P_n  \frac{K_{h,\bm{v}_0}(Q_0-\Psi_n)}{P_0 K_{h,\bm{v}_0}} G_{n}^c\left(\frac{I( A= d^\theta)-G_{n}^a}{G_{0}^cG_{0}^a}\right) &=P_n  \frac{K_{h,\bm{v}_0}(Q_n-\Psi_n)}{P_n K_{h,\bm{v}_0}} \left(\frac{I( A= d^\theta)-G_{n}^a}{G_{n}^a}\right)+o_p\left((nh^r)^{-1/2}\right)
\end{align*}}

Hence, if we undersmooth $G_{n}^a$ and $G_{n}^c$  such that $P_n \frac{K_{h,\bm{v}_0}I( A= d^\theta)}{G_n^a P_0 K_{h,\bm{v}_0}} (Q_n-\Psi_n)\left(\frac{\Delta^c-G_{n}^c }{G_n^c}\right)=o_p\left((nh^r)^{-1/2}\right)$ and $P_n  \frac{K_{h,\bm{v}_0}(Q_n-\Psi_n)}{P_n K_{h,\bm{v}_0}} \left(\frac{I( A= d^\theta)-G_{n}^a}{G_{n}^a}\right)=o_p\left((nh^r)^{-1/2}\right)$, then
\begin{align*}
P_0   \frac{K_{h,\bm{v}_0}(Q_0-\Psi_n)}{P_0 K_{h,\bm{v}_0}}\left(\frac{G_0-G_n }{G_0}\right)&=  -(P_n-P_0) \frac{K_{h,\bm{v}_0}I( A= d^\theta)}{G_0^a P_0 K_{h,\bm{v}_0}} (Q_0-\Psi_0)\left(\frac{\Delta^c-G_{0}^c }{G_0^c}\right) \\
& \hspace{.2in}-(P_n-P_0) \frac{K_{h,\bm{v}_0}(Q_0-\Psi_0)}{P_0 K_{h,\bm{v}_0}} \left(\frac{I( A= d^\theta)-G_{0}^a}{G_0^a}\right)+o_p\left((nh^r)^{-1/2}\right).
\end{align*}

\subsection{The influence function.} \label{sec:if}
Gathering all the terms, we have, 

\begin{align*}
\Psi_{nh}(\bm{v}_0,\theta)  -  \Psi_0(\bm{v}_0,\theta) =& (P_n-P_0) \frac{K_{h,\bm{v}_0}\Delta^c I( A= d^\theta)}{G_0P_0 K_{h,\bm{v}_0}}I(Y\geq t)  \\
&- (P_n-P_0) \frac{K_{h,\bm{v}_0}I( A= d^\theta)}{G_0^a P_0 K_{h,\bm{v}_0}} Q_0\left(\frac{\Delta^c-G_{0}^c }{G_0^c}\right) \\
& -(P_n-P_0) \frac{K_{h,\bm{v}_0}Q_0}{ P_0 K_{h,\bm{v}_0}} \left(\frac{I( A= d^\theta)-G_{0}^a}{G_0^a}\right) \\
&- (P_n-P_0)\frac{K_{h,\bm{v}_0}}{P_0K_{h,\bm{v}_0}}\Psi_{0h}(\bm{v}_0,\theta)+ h^{J} B_0(J)+o_p\left((nh^r)^{-1/2}\right).
\end{align*}
Thus, the asymptotic normality follows from the choice of bandwidth such that $(nh^r)^{1/2}h^{J} =O(1)$ which implies $h_n=n^{-1/(2J+r)}$. Thus, $(nh^r)^{1/2}\{\Psi_{nh}(\bm{v}_0,\theta)-  \Psi_0(\bm{v}_0,\theta)\}$ converges to a mean zero normal random variable when $\frac{h_n}{n^{-1/(2J+r)}} \rightarrow 0$ (i.e., $h_n$ converges to zero faster than $n^{-1/(2J+r)}$). 

\subsection{The optimal bandwidth rate.} \label{sec:opthrate} The rate constraints obtained  in Lemma \ref{lem:hnneg} (i.e., $h>n^{-\frac{2}{3r}}$) and Section \ref{sec:if} (i.e., $h<n^{-\frac{1}{r+2J}}$) imply that the optimal bandwidth rate is $h_n = n^{-\frac{1}{r+2J}}$. Note that for the constraint $n^{-\frac{2}{3r}}<h<n^{-\frac{1}{r+2J}}$ to hold we must have $J>r/4$ which is a very mild condition. For example, for $r<4$, we can still pick $J=1$.


\section{Proof of Theorem \ref{th:fixedh}: Asymptotic linearity of  the regimen-response curve estimator for a fixed bandwidth $h$}

\begin{align} \label{eq:proofth2}
\Psi_{nh}(\bm{v}_0,\theta)  -  \Psi_{0h}(\bm{v}_0,\theta) = &P_n  K_{h,\bm{v}_0}\Psi_n \left(\frac{1}{P_n K_{h,\bm{v}_0}}-\frac{1}{P_0 K_{h,\bm{v}_0}} \right) \\
&+ (P_n-P_0)\frac{K_{h,\bm{v}_0}}{P_0 K_{h,\bm{v}_0}}(\Psi_{n}-\Psi_{0}) + (P_n-P_0)\frac{K_{h,\bm{v}_0}}{P_0 K_{h,\bm{v}_0}}(\Psi_{0}) \nonumber\\
    &+P_0\frac{K_{h,\bm{v}_0}}{P_0 K_{h,\bm{v}_0}}(\Psi_{n}-\Psi_{0}). \nonumber
\end{align}

\subsection{Asymptotic behaviour of the first term on the RHS of (\ref{eq:proofth2})}  
Following the same steps used in the proof of Theorem \ref{th:movh} by consistency of $\Psi_n$ and because, for fixed $h$,  $(P_n-P_0)K_{h,\bm{v}_0}=O_p(n^{-1/2})$, we have 
\begin{align*}
P_n  K_{h,\bm{v}_0}\Psi_n \left(\frac{1}{P_n K_{h,\bm{v}_0}}-\frac{1}{P_0 K_{h,\bm{v}_0}} \right) =
 \frac{-(P_n-P_0)K_{h,\bm{v}_0}}{P_0K_{h,\bm{v}_0}} \Psi_{0h}(\bm{v}_0,\theta)+o_p(n^{-1/2}). 
\end{align*}
Under assumptions \ref{assump:cadlag} and \ref{assump:basic}, the equality above holds  uniformly on the space $\mathcal{V}$.

\subsection{Asymptotic negligibility of the second term on the RHS of (\ref{eq:proofth1})} \label{sec:th2p2}

We  show that $\sup_{v \in \mathcal{V}} \left|(P_n-P_0) \frac{K_{h,v}}{P_0 K_{h,v}}   \left( \Psi_n - \Psi_0 \right)\right| = o_p(n^{-1/2})$. Let $f_{v}(m) = m-\Psi_0$ and $\mathcal{F}_n = \{f_{v}(m): \sup_{v \in \mathcal{V}} \|f_{v}(m)\|<\delta_n, m \in \D[0,\tau], \|m\|^*_\nu<\infty, v\in\mathcal{V} \}$. Then we have
\begin{align*}
\sup_{v \in \mathcal{V}} \left|(P_n-P_0) \frac{K_{h,v}}{P_0 K_{h,v}}   \left( \Psi_n-\Psi_0\right)\right| \leq \sup_{\substack{v \in \mathcal{V}\\  f_{v}(m) \in \mathcal{F}_n}}  \left|(P_n-P_0) \frac{K_{h,v}}{P_0 K_{h,v}} f_{v}(m) \right| 
\end{align*}
By consistency of $\Psi_n$, we can consider $\delta_n \rightarrow 0$ which implies that the right hand side of the inequality above is of order $o_p(n^{-1/2})$.

\subsection{Asymptotic behaviour of the forth term on the RHS of (\ref{eq:proofth2})}  
Using the same techniques as in the proof of Theorem \ref{th:movh}, we can show 
\begin{align*}
  P_0\frac{K_{h,\bm{v}_0}}{P_0 K_{h,\bm{v}_0}}(\Psi_{n}-\Psi_{0}) = &(P_n-P_0)\frac{K_{h,\bm{v}_0}\Delta^c I( A= d^\theta)}{G_0P_0 K_{h,\bm{v}_0}}\{I(Y>t)-\Psi_{0}\}\\
    &-P_n \frac{K_{h,\bm{v}_0}\Delta^c I( A= d^\theta)}{G_0P_0 K_{h,\bm{v}_0}} \{I(Y\geq t)-\Psi_n\}+o_p(n^{-1/2}).
\end{align*}
This equality holds uniformly over $v \in \mathcal{V}$, because $\sup_{v \in \mathcal{V}} \left|(P_n-P_0) \frac{K_{h,v}}{P_0 K_{h,v}}   \left( \Psi_n - \Psi_0 \right)\right| = o_p(n^{-1/2})$ as shown in Section \ref{sec:th2p2}.
Moreover, following the result in Section \ref{sec:th1p3}, 
\begin{align*}
P_n \frac{K_{h,\bm{v}_0}\Delta^c I( A= d^\theta)}{G_0P_0 K_{h,\bm{v}_0}} \{I(Y\geq t)-\Psi_n\} =& P_n \frac{K_{h,\bm{v}_0}\Delta^c I( A= d^\theta)}{G_nP_0 K_{h,\bm{v}_0}} \{I(Y\geq t)-\Psi_n\}\\
& -(P_n-P_0) \frac{K_{h,\bm{v}_0}I( A= d^\theta)}{G_0^a P_0 K_{h,\bm{v}_0}} (Q_0-\Psi_0)\left(\frac{\Delta^c-G_{0}^c }{G_0^c}\right) \\
& -(P_n-P_0) \frac{K_{h,\bm{v}_0}(Q_0-\Psi_0)}{P_0 K_{h,\bm{v}_0}} \left(\frac{I( A= d^\theta)-G_{0}^a}{G_0^a}\right)\\
& + P_n \frac{K_{h,\bm{v}_0}I( A= d^\theta)}{G_0^a P_0  K_{h,\bm{v}_0}}(Q_0-\Psi_n)\left(\frac{\Delta^c-G_{n}^c }{G_0^c}\right) \\
& + P_n  \frac{K_{h,\bm{v}_0 }(Q_0-\Psi_n)}{P_0 K_{h,\bm{v}_0}} G_{n}^c\left(\frac{I( A= d^\theta)-G_{n}^a}{G_{0}^cG_{0}^a}\right)+o_p(n^{-1/2}).
\end{align*}
The equality above also holds uniformly over $v \in \mathcal{V}$ because
\begin{itemize}
    \item $\sup_{v \in \mathcal{V}} \left| (P_n-P_0) \frac{K_{h,\bm{v}_0}\Delta^c I( A= d^\theta)}{P_0 K_{h,\bm{v}_0}} \{I(Y\geq t)-\Psi_n\}\left(\frac{G_n-G_0}{G_0G_n}\right) \right|=o_p(n^{-1/2})$,
    \item  $\sup_{v \in \mathcal{V}} \left|  P_0 \frac{K_{h,\bm{v}_0}G_0}{P_0 K_{h,\bm{v}_0}} (Q_0-\Psi_n)\left(\frac{(G_0-G_n)^2 }{G_nG_0^2 }\right)\right|=o_p(n^{-1/2})$,
    \item  $\sup_{v \in \mathcal{V}} \left| P_0 K_{h,\bm{v}_0}\{G_n-G_0\} \left( \frac{Q_0-\Psi_n}{G_0P_0 K_{h,\bm{v}_0}}-\frac{Q_n-\Psi_n}{G_nP_n K_{h,\bm{v}_0}}\right)\right|=o_p(n^{-1/2})$.
\end{itemize}

\subsection{Convergence to a Gaussian process}

If we undersmooth $G_{n}^a$, $G_{n}^c$ and $\Psi_n$  such that $\sup_{v \in \mathcal{V}} \left|P_n  \frac{K_{h,\bm{v}_0 }(Q_0-\Psi_n)}{P_0 K_{h,\bm{v}_0}} G_{n}^c\left(\frac{I( A= d^\theta)-G_{n}^a}{G_{0}^cG_{0}^a}\right)\right|=o_p(n^{-1/2})$, $\sup_{v \in \mathcal{V}} \left| P_n \frac{K_{h,\bm{v}_0}I( A= d^\theta)}{G_0^a P_0  K_{h,\bm{v}_0}}(Q_0-\Psi_n)\left(\frac{\Delta^c-G_{n}^c }{G_0^c}\right) \right|= o_p(n^{-1/2})$, and $\sup_{v \in \mathcal{V}} \left| P_n \frac{K_{h,v}\Delta^c I( A= d^\theta) }{G_n P_n K_{h,v}} \{ \Psi_n -I(Y >t) \}  \right|= o_p(n^{-1/2})$, then
\begin{align*}
\sqrt n \{\Psi_{nh}(\bm{v}_0,\theta)  -  \Psi_{0h}(\bm{v}_0,\theta)\} = \sqrt n (P_n-P_0) D_v^*(P_0) + o_p(1) \Rightarrow_d GP.
\end{align*}
where

\begin{align*}
D_{\bm{v}}^*(P_0) = \frac{K_{h,\bm{v}_0}\Delta^c I( A= d^\theta)}{G_0P_0 K_{h,\bm{v}_0}}  &I(Y\geq t) -\frac{K_{h,\bm{v}_0}I( A= d^\theta)}{G_0^a P_0 K_{h,\bm{v}_0}} Q_0\left(\frac{\Delta^c-G_{0}^c }{G_0^c}\right)\\
&-\frac{K_{h,\bm{v}_0}Q_0}{P_0 K_{h,\bm{v}_0}} \left(\frac{I( A= d^\theta)-G_{0}^a}{G_0^a}\right)+\frac{K_{h,\bm{v}_0}}{P_0K_{h,\bm{v}_0}}\Psi_{0h}(\bm{v}_0,\theta).
\end{align*}
Hence, for all $\bm{v} \in \mathcal{V}$, $\sqrt n \{\Psi_{nh}(\bm{v},\theta)  -  \Psi_{0h}(\bm{v},\theta)\}$ converges weakly as a random element of the cadlag function
space endowed with the supremum norm to a Gaussian process $GP$ with covariance
structure implied by the covariance function $\rho (\bm{v},\bm{v}') = P_0 D^*_{\bm{v}}(P_0)      
D^*_{\bm{v}'}(P_0)$.

\section{Uniform consistency of the regimen-response curve estimator}\label{sec:uniform}

In Theorem \ref{th:movh} we showed that our estimator is pointwise consistent. In this section we show that our estimator is uniformly consistent as well in the sense that $\sup_{v \in \mathcal{V}} |\Psi_{nh}(\bm{v}_0,\theta) -  \Psi_0(\bm{v}_0,\theta)| = o_p(1)$ and derive a rate of convergence. Recall that
\[
\Psi_{nh}(\bm{v}_0,\theta) -  \Psi_0(\bm{v}_0,\theta)= \{ \Psi_{0h}(\bm{v}_0,\theta) - \Psi_0(\bm{v}_0,\theta)\}+\{\Psi_{nh}(\bm{v}_0,\theta) -  \Psi_{0h}(\bm{v}_0,\theta)\},
\]
where
\begin{align} 
\Psi_{nh}(\bm{v}_0,\theta) -  \Psi_{0h}(\bm{v}_0,\theta) = &P_n  K_{h,\bm{v}_0}\Psi_n \left(\frac{1}{P_n K_{h,\bm{v}_0}}-\frac{1}{P_0 K_{h,\bm{v}_0}} \right) \\
&+ (P_n-P_0)\frac{K_{h,\bm{v}_0}}{P_0 K_{h,\bm{v}_0}}(\Psi_{n}-\Psi_{0}) + (P_n-P_0)\frac{K_{h,\bm{v}_0}}{P_0 K_{h,\bm{v}_0}}\Psi_{0} \nonumber\\
    &+P_0\frac{K_{h,\bm{v}_0}}{P_0 K_{h,\bm{v}_0}}(\Psi_{n}-\Psi_{0}). \nonumber
\end{align}
We know that $\sup_{v \in \mathcal{V}} |\Psi_{0h}(\bm{v}_0,\theta) - \Psi_0(\bm{v}_0,\theta)| = O_p(h^r)$, $\sup_{v \in \mathcal{V}} (P_n-P_0) K_{h,\bm{v}} = O_p( n^{-1/2}h^{-r/2})$, and 
 $\sup_{v \in \mathcal{V}} \left|(P_n-P_0) \frac{K_{h,v}}{P_0 K_{h,v}}   \left( \Psi_n-\Psi_0\right)\right| = o_p(n^{-1/2}h^{-r/2})$. Moreover, $\sup_{v \in \mathcal{V}} \left| P_0 \frac{K_{h,\bm{v}_0}}{P_0 K_{h,\bm{v}_0}} \left(\Psi_n-\Psi_0\right) \right| = O_p(\sup_{v \in \mathcal{V}} \|\Psi_n-\Psi_0\|)$. Thus, assuming that $\sup_{v \in \mathcal{V}} \|\Psi_n-\Psi_0\|=O_p(b_n)$, we have 
 \[
 \sup_{v \in \mathcal{V}} |\Psi_{nh}(\bm{v}_0,\theta) -  \Psi_0(\bm{v}_0,\theta)| = O_p\left( n^{-1/2}h^{-r/2}+h^r+b_n \right).
 \]
 
 
 \section{Proof of Theorem \ref{th:thetrate}: Consistency of $\theta_{nh}$}
 \label{sec:ratethetaconvh}

The upper bound of $E_P \sup_{f \in \mathcal{F}, \|f\|_P \leq \delta} (n)^{1/2}(P_n-P_0)(f)$ given in Lemma \ref{lem:vw} is either dominated by $\mathcal{E}(\delta,\mathcal{F},L^2(P))$ or $\mathcal{E}^2(\delta,\mathcal{F},L^2(P))/(\delta^2 n^{1/2})$. The switch occurs when $\mathcal{E}(\delta,\mathcal{F},L^2(P)) = \delta^2 n^{1/2}$. By Lemma \ref{lem:laan}, for the cadlag class of functions $\mathcal{F}=\mathcal{F}_{d,M}$, we have $\mathcal{E}(\delta,\mathcal{F},L^2(P)) = \delta^{1/2} |\log \delta|^{p-1}$. Therefore, the switch occurs when $\delta^2 n^{1/2}=\delta^{1/2} |\log \delta|^{p-1}$ which implies $\delta = n^{-1/3}|\log n|^{2(p-1)/3}$  (This is because $\log \delta = -1/3\log n+\log |\log \delta|^{p-1}$ where $\log n$ is the leading term). So the result of Lemma \ref{lem:vw} can be written as
\begin{align*}
    E_P \sup_{f \in \mathcal{F}, \|f\|_P \leq \delta} n^{1/2}(P_n-P_0)(f) \leq &I(\delta \geq n^{-1/3}| \log n|^{2(p-1)/3} )\mathcal{E}(\delta,\mathcal{F},L^2(P)) \\
&+I(\delta \leq n^{-1/3}|\log n|^{2(p-1)/3} ) \left\{\frac{\mathcal{E}(\delta,\mathcal{F},L^2(P))}{\delta^2 n^{1/2}}M\right\}.
\end{align*}

In the following, for the ease of notation, we denote $\theta_{nh}(v_0)\equiv \theta_{nh}$ and $\theta_{0}(v_0)\equiv \theta_{0}$. Define a loss-based dissimilarity $d(\theta_n,\theta_0) = \Psi_0(\theta_{nh}) - \Psi_0(\theta_0)$. Then by definition
\begin{align*}
0 \leq d(\theta_{nh},\theta_0) &= (\Psi_0-\Psi_{nh})(\theta_{nh}) - (\Psi_0-\Psi_{nh})(\theta_0) + \Psi_{nh}(\theta_{nh})-\Psi_{nh}(\theta_0) \\
                            & \leq -\{(\Psi_{nh}-\Psi_0)(\theta_{nh}) - (\Psi_{nh}-\Psi_0)(\theta_0) \}\\
                            &=(P_n-P_0) \{D^*_0(\theta_{nh})-D^*_0(\theta_0)\} + h^{J} B_0(\theta_{nh}-\theta_0) + R(\theta_{nh})-R(\theta_0).
\end{align*}
The last equality follows from Theorem \ref{th:movh}. Assuming  $\frac{h_n}{n^{-1/(2J+r)}} \rightarrow 0$ (i.e., $h_n$ converges to zero faster than $n^{-1/(2J+r)}$), we have 
\begin{align*}
0 \leq d(\theta_{nh},\theta_0)  \leq (P_n-P_0) \{D^*_0(\theta_{nh})-D^*_0(\theta_0)\}  + \{R(\theta_{nh})-R(\theta_0)\}+o(n^{-J/(2J+r)}).
\end{align*}
Let $\tilde d(\theta_{nh},\theta_{0h})=(P_n-P_0) \{D^*_0(\theta_{nh})-D^*_0(\theta_{0h})\} $. Using the rate obtained for the remainder terms in Theorem \ref{th:movh}, we have $d(\theta_{nh},\theta_0) = \tilde d(\theta_{nh},\theta_{0h}) + \{R(\theta_{nh})-R(\theta_0)\} = \tilde d(\theta_{nh},\theta_{0h})+ o_p(n^{-1/2}h^{-r/2})$. We proceed with assuming that the convergence rate of $\tilde d(\theta_{nh},\theta_{0h})$ is the dominating rate. We will show that the assumption holds latter in the proof.

 Recall,
\begin{align*}
 D^*_0(\theta)=  \frac{K_{h,\bm{v}_0}\Delta^c I( A= d^\theta)}{G_0P_0 K_{h,\bm{v}_0}}I(Y\geq t) -   \frac{K_{h,\bm{v}_0}I( A= d^\theta)}{G_0^a P_0 K_{h,\bm{v}_0}} Q_0\left(\frac{\Delta^c-G_{0}^c }{G_0^c}\right) - &\frac{K_{h,\bm{v}_0}Q_0}{P_0 K_{h,\bm{v}_0}} \left(\frac{I( A= d^\theta)-G_{0}^a}{G_0^a}\right) \\
&- \frac{K_{h,\bm{v}_0}}{P_0K_{h,\bm{v}_0}}\Psi_{0h}(\bm{v}_0,\theta). 
\end{align*}
Let $f=D^*_0(\theta)-D^*_0(\theta_0)$ and $h^{-r}\tilde{f} = f$. Then, 
\begin{align}
(P_n-P_0) \tilde f &\leq   \sup_{ \|\tilde f\|_P \leq h^{r/2}} | (P_n-P_0)\tilde f | \nonumber \\
                                 & =O_p\{  n^{-1/2} \mathcal{E}(h^{r/2},L^2(P))\} \label{pth:bound1}
\end{align}
The equality (\ref{pth:bound1}), follows from Markov inequality and  the result of Lemma \ref{lem:wrate}. Hence, $(P_n-P_0)  f = O_p(n^{-1/2} h^{r/4}h^{-r}) = O_p( n^{-1/2} h^{-3r/4})$. 
Using Taylor expansion, we have $d(\theta_{nh},\theta_0) =\Psi_0(\theta_{nh}) - \Psi_0(\theta_0)= (\theta_{nh}-\theta_0)^\top\frac{\partial^2 \Psi_0}{\partial \theta^2} (\theta_{nh}-\theta_0)+o(\|\theta_{nh}-\theta_0\|^2)$ which implies $\|\theta_{nh}-\theta_0\|_{2,\mu}=O_p(n^{-1/4}h^{-3r/8})$. Let $\delta = n^{-1/4}h^{-3r/8}$. Using Lemma \ref{lem:wrate},
\begin{align}
(P_n-P_0) \tilde f &\leq   \sup_{ \|\tilde f\|_P \leq h^{r/2}\delta} |(P_n-P_0)\tilde f| \nonumber \\
                                 & =O_p\{  n^{-1/2} \mathcal{E}(h^{r/2}\delta^{\frac{\kappa}{\kappa+2}},L^2(P))\}, \label{pth:bound2}
\end{align}
Then $d(\theta_{nh},\theta_0) = O_p(n^{-1/2}h^{-3r/4} \delta^{\frac{\kappa}{2\kappa+4}})$. The iteration continues until there is no improvement in the rate. That is $\delta^2 = n^{-1/2}h^{-3r/4} \delta^{\frac{\kappa}{2\kappa+4}}$ which implies 
\[
\delta = \left( n^{-1/2} h^{-3r/4}\right)^{\frac{2\kappa+4}{3\kappa+8}}.
\]
Hence, $\|\theta_{nh}(v_0)-\theta_0(v_0)\|_{2}=O_p\left(\left( n^{-1/2} h^{-3r/4}\right)^{\frac{2\kappa+4}{3\kappa+8}}\right)$. When there is a margin around zero, that is, $pr(0<|\bm{S}^\top{{\theta}}_0|<l) 	=0$, we will have $\|\theta_{nh}(v_0)-\theta_0(v_0)\|_{2,\mu}=O_p\left( n^{-1/3} h^{-r/2}\right)$. 

At each iteration the convergence rate of $d(\theta_{nh},\theta_0)$ and at the fix point it achieves the rate of $d(\theta_{nh},\theta_0)=O_p\left(\left( n^{-1/2} h^{-3r/4}\right)^{\frac{4\kappa+8}{3\kappa+8}}\right)$. Hence, to show that the latter rate dominates the remainder term rate, it is sufficient to show that 
\[
\frac{(nh^r)^{1/2}}{\left( n^{1/2} h^{3r/4}\right)^{\frac{4\kappa+8}{3\kappa+8}}} \rightarrow \infty. 
\]
The condition is satisfied when $h < n^{-\frac{\kappa}{r(3k+4)}}$. The right hand side is a decreasing function of $\kappa$ and as $\kappa \rightarrow \infty$, $n^{-\frac{\kappa}{r(3\kappa+4)}} \rightarrow n^{-\frac{1}{3r}}$. This rate condition is satisfied when $n^{-2/(3r)}<h<n^{-1/(2J+r)}$ where $J>r/4$ and for the optimal rate of $h_n = n^{-1/(2J+r)}$. So, no further condition is imposed to the rate of bandwidth and the optimal rate remains as $h_n = n^{-1/(2J+r)}$. Consequently, 
\[
\|\theta_{nh}(v_0)-\theta_0(v_0)\|_{2}=O_p\left( n^{\frac{(r-4J)(2\kappa+4)}{(8J+4r)(3\kappa+8)}} \right).
\]

\section{Additional figures}\label{sec:addfig}

\begin{figure}[ht]
    \centering
    \includegraphics[width = 1\textwidth]{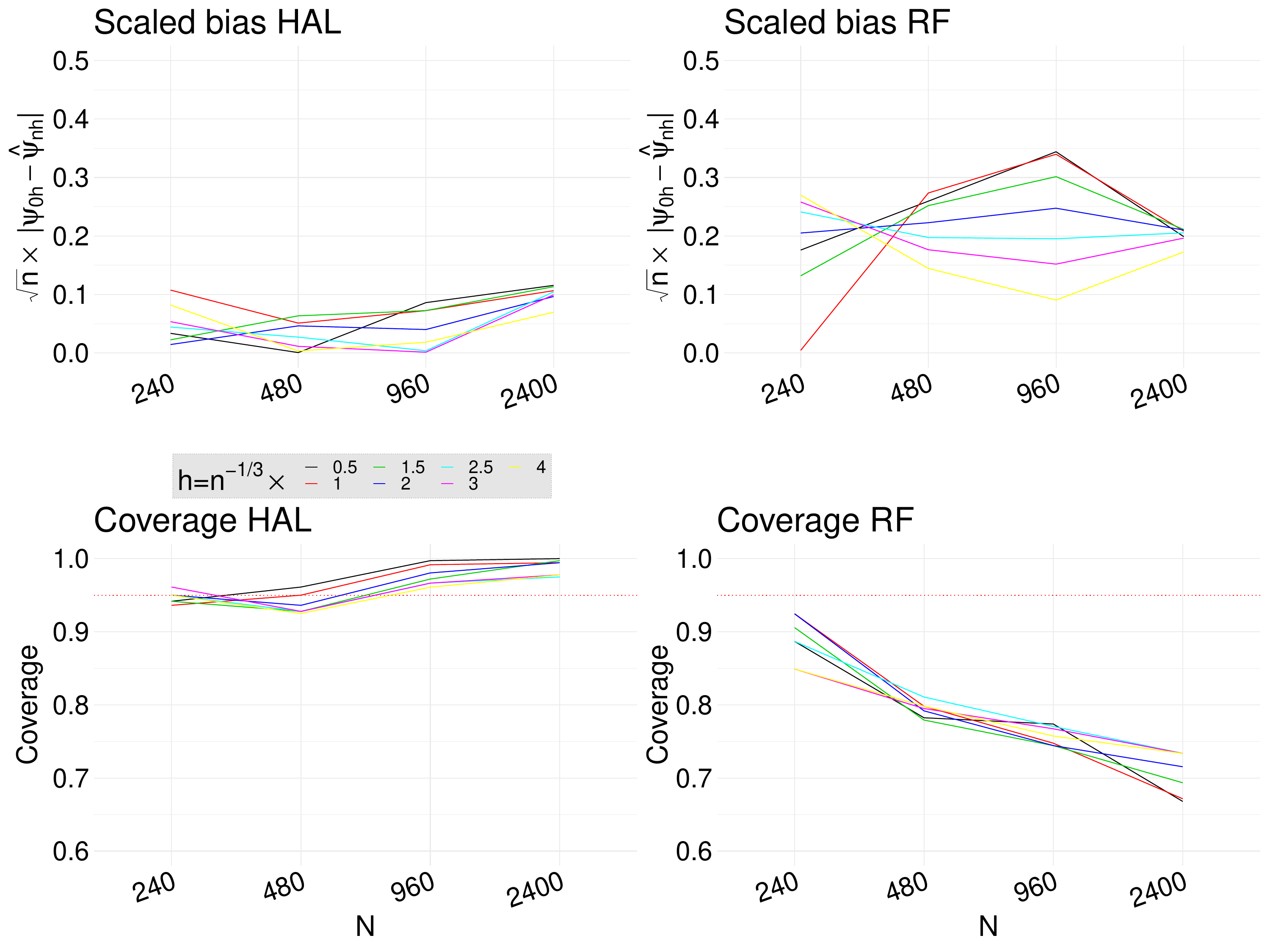}
    \caption{Simulation studies Scenario 2: The target parameter of interest is $\Psi_{0h}$. The plots show the scaled bias and coverage rate when the weight functions are estimated using an undersmoothed highly adaptive lasso (HAL) and a random forest (RF). } 
    \label{fig:supfixedh}
\end{figure}

\begin{figure}[ht]
    \centering
    \includegraphics[width = 1\textwidth]{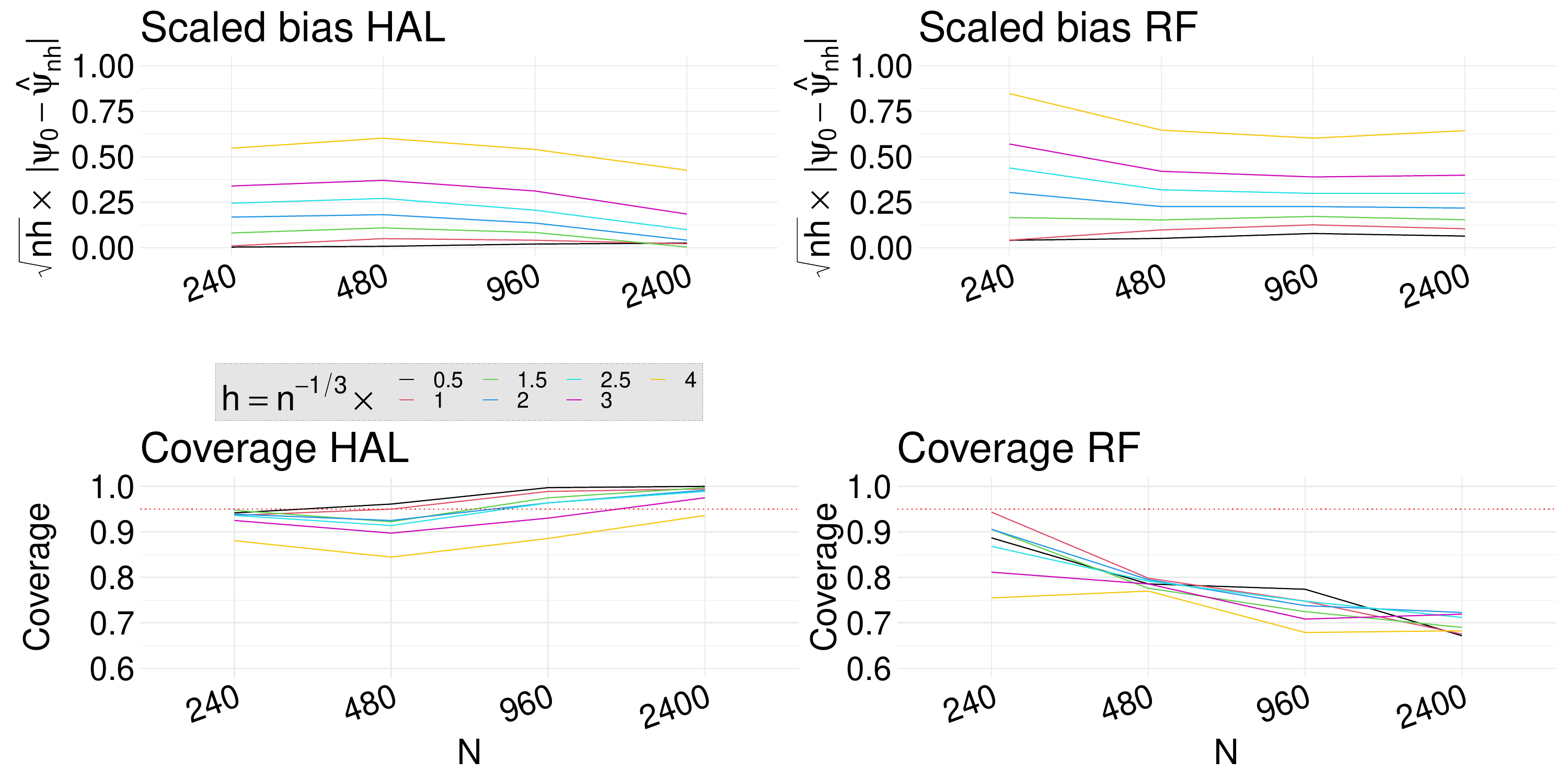}
    \caption{Simulation studies Scenario 2: The target parameter of interest is $\Psi_{0}$. The plots show the scaled bias and coverage rate when the weight functions are estimated using an undersmoothed highly adaptive lasso (HAL) and a random forest (RF).} 
    \label{fig:supconvergh}
\end{figure}

\begin{figure}[ht]
    \centering
    \includegraphics[width = 1\textwidth]{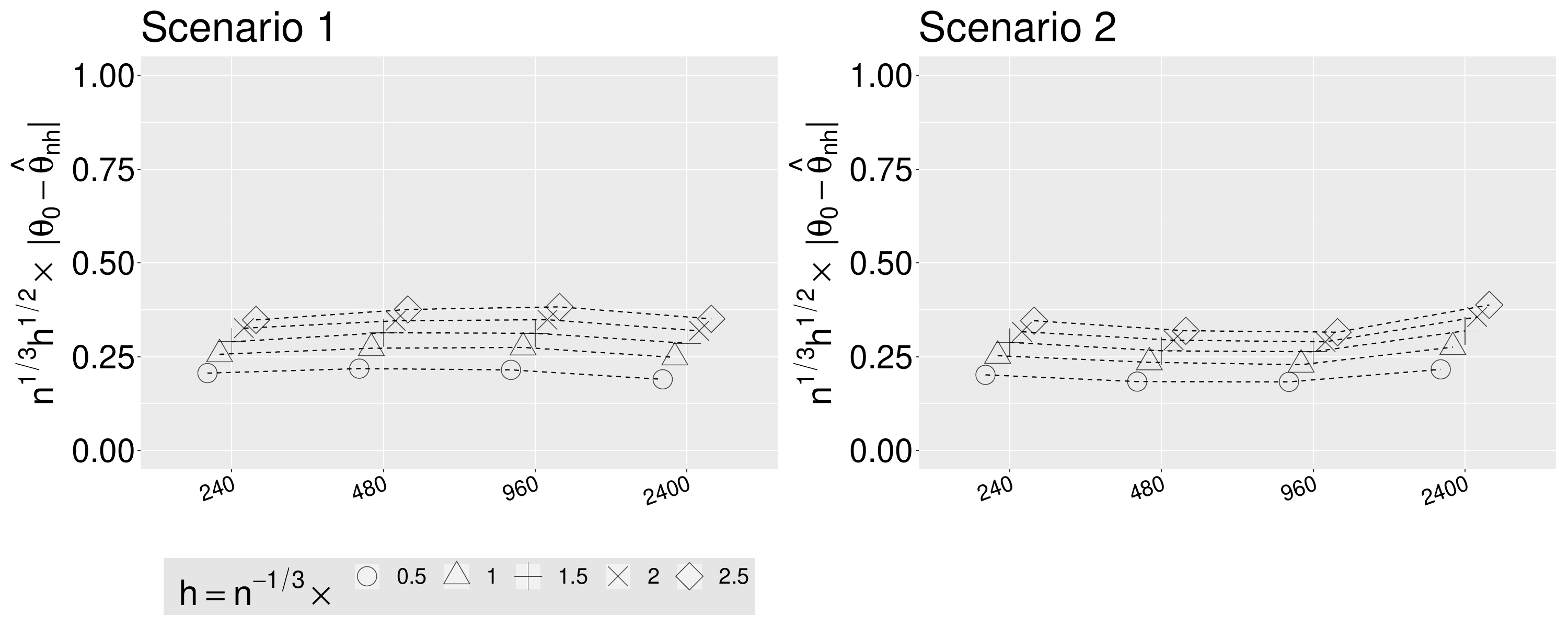}
    \caption{Simulation studies Scenarios 1 and 2: The target parameter of interest is the true minimizer of $\Psi_0$ (i.e., $\theta_{0}$). The plots show the scaled bias of $\theta_{nh}$ when the weight functions are estimated using an undersmoothed highly adaptive lasso.} 
    \label{fig:thetrate}
\end{figure}

  \bibliographystyleapp{biometrika}
  \bibliographyapp{chalipwbib}


\end{document}